\tikzset{shorten <>/.style={shorten >=#1,shorten <=#1}}
\newcommand{\bA}{\mathbb{A}}
\newcommand{\bC}{\mathbb{C}}
\newcommand{\bF}{\mathbb{F}}
\newcommand{\bP}{\mathbb{P}}
\newcommand{\bQ}{\mathbb{Q}}
\newcommand{\bR}{\mathbb{R}}
\newcommand{\bZ}{\mathbb{Z}}
\newcommand\lra{\longrightarrow}
\newcommand\Diff{\mathrm{Diff}}
\newcommand\Coker{\operatorname*{Coker}}
\newcommand{\hcoker}{/\!\!/}
\newcommand{\map}{\mathrm{map}}
\renewcommand{\epsilon}{\varepsilon}
\newcommand{\Sq}{\mathrm{Sq}}
\newcommand{\Mod}{\mathrm{Mod}}
\newcommand{\SO}{\mathrm{SO}}
\newcommand{\OO}{\mathrm{O}}
\newcommand{\U}{\mathrm{U}}
\newcommand{\Mon}{\mathrm{Mon}}
\newcommand{\MCG}{\mathrm{MCG}}
\newcommand{\Stab}{\mathrm{Stab}}
\newcommand{\sfr}{\mathrm{sfr}}
\newcommand{\hyp}{\mathrm{hyp}}
\mathchardef\ordinarycolon\mathcode`\:
\theoremstyle{plain}
\newtheorem{MainThm}{Theorem}
\newtheorem{theorem}{Theorem}[section]
\newtheorem{proposition}[theorem]{Proposition}
\newtheorem{lemma}[theorem]{Lemma}
\newtheorem{corollary}[theorem]{Corollary}
\theoremstyle{definition}
\theoremstyle{remark}
\newtheorem{remark}[theorem]{Remark}
\newtheorem*{remark*}{Remark}
\newtheorem*{example*}{Example}
\numberwithin{equation}{section}
\title[Monodromy and mapping class groups]{Monodromy and mapping class groups\\ of 3-dimensional hypersurfaces}
\author{Oscar Randal-Williams}
\email{o.randal-williams@dpmms.cam.ac.uk}
\address{Centre for Mathematical Sciences\\
Wilberforce Road\\
Cambridge CB3 0WB\\
UK}
\subjclass[2010]{14M10, 
14D05, 
57R15, 
57R50, 
20E26
}
\begin{document}
\begin{abstract}
We describe the subgroup of the mapping class group of a hypersurface in $\mathbb{CP}^4$ consisting of those diffeomorphisms which can be realised by monodromy.
\end{abstract}
\maketitle

\setcounter{tocdepth}{1}
\tableofcontents

\section{Introduction}

Let $d$ be a positive integer, and $X_d \subset \bC\bP^4$ denote the degree $d$ Fermat hypersurface. It resides in the universal family $\mathcal{X}_d \to \mathcal{U}_d$ of smooth 3-dimensional degree $d$ hypersurfaces, whose base is the open subspace $\mathcal{U}_d \subset \mathbb{P}H^0(\bC\bP^4 ; \mathcal{O}(d))$ of degree $d$ homogeneous polynomials in 5 variables whose zero locus is smooth. We write
$$\Mon_d := \pi_1(\mathcal{U}_d, X_d)$$
for the monodromy group of this family.

On the other hand $X_d$ can be considered as an oriented 6-manifold, and we write
$$\MCG_d := \pi_0 \Diff^+(X_d)$$
for the oriented mapping class group of this manifold: the group of isotopy classes of orientation-preserving diffeomorphisms. Fibre transport for the universal family yields a group homomorphism
\begin{equation*}
\alpha: \Mon_d \lra \MCG_d.
\end{equation*}
A presentation for the group $\Mon_d$ has been given by L{\"o}nne \cite{Lonne}, who suggests \cite[p.\ 362]{Lonne} studying the map $\alpha$. Furthermore, a quite complete description of $\MCG_d$ has been given by Kreck and Su \cite{KSv3}. Using the latter, Hain \cite{HainMCGKahler} has explained how the methods of Sullivan \cite{sullivaninf} show that the image of $\alpha$ has infinite index in $\MCG_d$. Our goal is to completely describe the image of $\alpha$, in terms of Kreck and Su's description of $\MCG_d$.

Most of the answer will be described in terms of automorphisms of $\pi_3(X_d)$ respecting certain structures. There is a natural extension
$$0 \lra \bZ/d \lra \pi_3(X_d) \overset{h}\lra H_3(X_d;\bZ) \lra 0,$$
where $h$ is the Hurewicz map, and the $\bZ$-valued intersection form of $X_d$ induces an antisymmetric form $\lambda$ on $\pi_3(X_d)$, whose radical is $\bZ/d \leq \pi_3(X_d)$. We write $\mathrm{Aut}(\pi_3(X_d), \lambda)$ for the group of automorphisms of $\pi_3(X_d)$ which preserve $\lambda$ \emph{and which induce the identity on the radical $\bZ/d \leq \pi_3(X_d)$ of $\lambda$}. We will explain that $\lambda$ has a quadratic refinement
$$\mu : \pi_3(X_d) \lra \bZ/2,$$
induced by the embedding $X_d \subset \bC\bP^4$, and that this quadratic refinement is $\Mon_d$-invariant. By a quadratic refinement of $\lambda$ we mean that $\mu$ is a function satisfying
$$\mu(a+b) = \mu(a) + \mu(b) + \lambda(a,b) \mod 2.$$
We write $\mathrm{Aut}(\pi_3(X_d), \lambda, \mu)$ for the subgroup of $\mathrm{Aut}(\pi_3(X_d), \lambda)$ consisting of those automorphisms which preserve $\mu$. 

\begin{MainThm}\label{thm:Main} Suppose $d \geq 3$.
\begin{enumerate}[(i)]
\item The map $\Mon_d \to \mathrm{Aut}(\pi_3(X_d), \lambda, \mu)$ is surjective.

\item The kernel of the surjection $\mathrm{Im}(\alpha) \to \mathrm{Aut}(\pi_3(X_d), \lambda, \mu)$ consists precisely of those isotopy classes of diffeomorphisms of $X_d$ which may be represented by diffeomorphisms supported in an embedded 6-disc.
\end{enumerate}
\end{MainThm}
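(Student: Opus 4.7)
The plan is to handle the two parts separately: part (i) via Lönne's Dehn-twist generators for $\Mon_d$ combined with a Witt-type generation statement for the target group, and part (ii) via the Kreck--Su description of $\MCG_d$ combined with an explicit monodromy realisation of disc-supported diffeomorphisms.

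For part (i), I would start from Lönne's presentation, in which $\Mon_d$ is generated by Dehn twists along an explicit finite family of Lagrangian vanishing 3-spheres $L_i \subset X_d$ produced by Lefschetz degenerations in $\mathcal{U}_d$. Each such Dehn twist acts on $\pi_3(X_d)$ as a Picard--Lefschetz transvection $T_{L_i}(v) = v + \lambda(v, L_i)\,L_i$, and a framed-cobordism computation using the almost complex structure shows $\mu(L_i) = 1$, which simultaneously confirms $\Mon_d$-invariance of $\mu$ and shows that the image of $\Mon_d$ in $\MCG_d$ lies in $\mathrm{Aut}(\pi_3(X_d), \lambda, \mu)$. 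Surjectivity then reduces to a purely group-theoretic statement: the transvections $T_L$ with $\mu(L) = 1$ generate $\mathrm{Aut}(\pi_3(X_d), \lambda, \mu)$. I would prove this by first working modulo the radical $\bZ/d$, where it becomes a classical generation statement for the symplectic group preserving a $\bZ/2$-valued quadratic refinement by transvections along odd vectors, and then lifting through the central extension by exploiting transvections along Lagrangian cycles whose images in $H_3(X_d;\bZ)$ differ by torsion.

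For part (ii), one inclusion is immediate: a diffeomorphism supported in a 6-disc is isotopic to the identity outside the disc, hence by general position fixes every class in $\pi_3(X_d)$. I would then verify that every disc-supported isotopy class actually lies in $\mathrm{Im}(\alpha)$ --- this group is $\pi_0\Diff(D^6\,\mathrm{rel}\,\partial D^6) \cong \Theta_7 \cong \bZ/28$, and I would realise its elements monodromically via loops in $\mathcal{U}_d$ coming from Brieskorn-type degenerations whose Milnor fibrations are known to realise the generators of $\Theta_7$. For the reverse inclusion I would combine the Kreck--Su classification, which describes $\MCG_d$ as an extension of $\mathrm{Aut}(\pi_3(X_d), \lambda, \mu)$ (or a close relative thereof) by an abelian group containing $\Theta_7$ together with a finite collection of further characteristic-class-type invariants, with the claim that each of these additional invariants vanishes on $\mathrm{Im}(\alpha)$.

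The main obstacle is exactly this last vanishing claim. One route is cohomological: each additional Kreck--Su invariant factors through a homomorphism $\MCG_d \to A$ for some abelian $A$, which corresponds to a cohomology class on $B\Diff^+(X_d)$; pulling back along the classifying map $B\Mon_d \to B\Diff^+(X_d)$ and relating $B\Mon_d$ to the moduli of smooth degree-$d$ hypersurfaces, one tries to show the pullback vanishes in the relevant degree. A more direct alternative is to use Lönne's presentation to write any monodromy in the kernel of the $\pi_3$-action explicitly as a product of Picard--Lefschetz transvections whose combined action is trivial, and then construct a family-over-a-disc argument concentrating its support into a single Milnor ball, where it becomes manifestly disc-supported. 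Either strategy is likely to be the technical heart of the argument, and both hinge on having precise enough control of the Kreck--Su extension to isolate the non-obvious invariants.
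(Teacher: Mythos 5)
Your outline identifies the right two halves of the problem, but at both of the genuinely hard points it substitutes a phrase for an argument, and in each case the phrase does not unwind into a proof.

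For part (i), the reduction modulo the radical is fine: that transvections in vectors of $\mu$-length $1$ generate the quadratic symplectic group is classical, and is in any case exactly what Beauville's theorem on the image in $\mathrm{Aut}(H_3(X_d;\bZ),\lambda,q)$ already supplies. The gap is the lift through $\mathrm{Ker}(\rho)\cong\mathrm{Hom}(H_3(X_d;\bZ),2\cdot\bZ/d)$. Note first that $H_3(X_d;\bZ)$ is torsion-free, so ``Lagrangian cycles whose images in $H_3$ differ by torsion'' must mean pairs of vanishing cycles with \emph{equal} image in $H_3$ whose classes in $\pi_3(X_d)$ differ by a multiple of $\eta$. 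Since $\lambda$ factors through $H_3$, such a pair $L,L'$ yields $T_LT_{L'}^{-1}\in\mathrm{Ker}(\rho)$ equal to $v\mapsto v+\lambda(v,L)(L-L')$, and to generate the whole kernel you would need to exhibit such pairs with $L-L'$ realising every element of $2\cdot\bZ/d\{\eta\}$ against a spanning set of dual classes. Neither L{\"o}nne's presentation nor the Picard--Lefschetz formalism hands you any control over the $\eta$-components of vanishing cycles, and this is precisely where the paper has to import serious extra input: it proves an abstract criterion (surjectivity is equivalent to $\eta$ dying in $H_0(\Mon_d;\pi_3(X_d))$), and verifies it using Pham's and Looijenga's descriptions of $H_3$ of the affine and projective Fermat hypersurfaces as $\bZ[\mu_d^4]$-modules, together with the fact that the $\mu_d^4$-action is realised by monodromy via $PGL_5(\bC)$ acting on $\mathcal{U}_d$. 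Nothing in your proposal replaces this step. (You also need $\mu$ to exist as a $\Mon_d$-invariant quadratic function on all of $\pi_3(X_d)$, not just on embedded vanishing spheres; the paper constructs it as the surgery-kernel form of the normal map $\ell^{\hyp}_{X_d}:X_d\to\bC\bP^\infty$, and for $d$ even it is genuinely nontrivial on the radical.)

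For part (ii), you correctly flag the vanishing of the remaining Kreck--Su invariants on $\mathrm{Im}(\alpha)$ as the heart, but neither of your proposed routes is workable as stated. There are two distinct things to kill inside $\mathrm{SMCG}_d$: the $H^3(X_d;\bZ)$ quotient (Sullivan's distortion of $p_1$), which the paper eliminates by observing that the universal family carries a fibrewise $\theta^{\hyp}$-structure, so that $\alpha$ lands in $\Stab_{\MCG_d}(\ell^{\hyp}_{X_d})$, whose intersection with $\mathrm{SMCG}_d$ is exactly $\mathrm{K}_d$; and then $\Coker(\Phi)=\mathrm{K}_d/\Phi(\Theta_7)$ (a $\bZ/2$ and/or $\bZ/3$), which is eliminated by computing $H_1(\Stab_{\MCG_d}(\ell^{\hyp}_{X_d});\bZ)$ via Galatius--Randal-Williams and Adams spectral sequence calculations, and then using that $\Mon_d$ is normally generated by a single Dehn twist (irreducibility of the discriminant) whose image in the abelianisation is checked to vanish by a mapping-torus cobordism argument. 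Your route of ``concentrating the support into a single Milnor ball'' is essentially equivalent to the statement being proved and there is no known geometric mechanism for it; your cohomological route is the right shape but without identifying the invariants or a means of computing the pullbacks it is not yet an argument. Finally, realising $\Phi(\Theta_7)$ by monodromy is not a formal consequence of Brieskorn's work on exotic spheres as links; the actual mechanism is Krylov's identity expressing $\Sigma_{\mathrm{Milnor}}^{-1}\in\Theta_7\leq\pi_0\Diff^+(S^3\times S^3)$ as $(YUY)^4$ for the two Dehn twists of an $A_2$-plumbing, which embeds in $X_d$ for $d\geq 3$. Note also that the disc-supported subgroup of $\MCG_d$ is the quotient $\Theta_7/\mathrm{Ker}(\Theta_7\to\MCG_d)$, not $\Theta_7$ itself.
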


The latter is a quotient of $\pi_0(\Diff_\partial(D^6)) = \Theta_7 = \bZ/28$, and may be extracted from the work of Kreck and Su (see Lemma \ref{lem:WhatIsKd}). 
It is given by
$$\frac{\Theta_7}{\mathrm{Ker}(\Theta_7 \to \MCG_d)} \cong \begin{cases}
0 & d \equiv 2,4,6,10,12,14 \mod 16\\
\bZ/2 & d \equiv 3,5,8,11,13 \mod 16\\
\bZ/4 & d \equiv 0, 1, 7, 9, 15 \mod 16
\end{cases}
\oplus \begin{cases}
0 & d \not \equiv 0 \mod 7\\
\bZ/7 & d  \equiv 0 \mod 7.
\end{cases}$$
There is therefore a central extension
$$0 \lra \Theta_7/\mathrm{Ker}(\Theta_7 \to \MCG_d) \lra \mathrm{Im}(\alpha) \lra \mathrm{Aut}(\pi_3(X_d), \lambda, \mu) \lra 0.$$
In Section \ref{sec:Wg1} we shall combine Theorem \ref{thm:Main} with existing work on diffeomorphism groups of the manifolds $W_{g,1} := (\#^g S^3 \times S^3) \setminus \mathrm{int}(D^6)$ to partially understand this extension, and in particular obtain the following, where we recall that the \emph{finite residual} of a group is the intersection of all of its finite-index subgroups.

\begin{MainThm}\label{thm:B}
The finite residual of $\mathrm{Im}(\alpha)$ is the subgroup $\Theta_7/\mathrm{Ker}(\Theta_7 \to \MCG_d)$.
\end{MainThm}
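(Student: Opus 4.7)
Set $K := \mathrm{Ker}(\Theta_7 \to \MCG_d)$ and $A := \mathrm{Aut}(\pi_3(X_d), \lambda, \mu)$. Theorem~\ref{thm:Main} supplies a central extension
$$0 \lra \Theta_7/K \lra \mathrm{Im}(\alpha) \lra A \lra 0$$
with $\Theta_7/K$ finite. The plan is to prove both inclusions for the finite residual of $\mathrm{Im}(\alpha)$, which I denote by $R$.

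\emph{Upper bound} ($R \subseteq \Theta_7/K$). I would first show $A$ is residually finite. Since $\pi_3(X_d)$ is finitely generated abelian with $\bigcap_n n\pi_3(X_d) = 0$, the natural map $\mathrm{Aut}(\pi_3(X_d)) \to \prod_n \mathrm{Aut}(\pi_3(X_d)/n\pi_3(X_d))$ has trivial kernel, so $\mathrm{Aut}(\pi_3(X_d))$ is residually finite. Its subgroup $A$ inherits this, whence the finite residual of $A$ is trivial. Functoriality of the finite residual (any group homomorphism $G \to H$ sends the finite residual of $G$ into that of $H$) then forces $R \subseteq \Theta_7/K$.

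\emph{Lower bound} ($\Theta_7/K \subseteq R$). For the reverse inclusion I would apply the following criterion: \emph{if $z \in G$ has finite order and for every $N \in \bN$ there exists $y_N \in G$ with $y_N^N = z$, then $z$ lies in the finite residual of $G$} (for any finite quotient $G \to Q$, taking $N = |Q|$ yields $\bar z = \bar y_N^{|Q|} = 1$). The task reduces to realising every $z \in \Theta_7/K$ as an $N$-th power of some monodromy, for every $N$. I would do this by fixing an embedded $W_{g,1} \subset X_d$ (with $g = \tfrac{1}{2} b_3(X_d)$) and exploiting the geometric analysis of the $\Theta_7$-summand of $\pi_0\Diff(W_{g,1}, \partial)$ due to Galatius--Randal-Williams and Krannich, in which the relevant divisibility relations are established by plumbing exotic discs along nested handle configurations. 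These divisibility relations should then be transported into $\mathrm{Im}(\alpha)$ using Dehn twists around vanishing cycles of a suitable Lefschetz pencil on $X_d$, which are known to arise as monodromies.

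\emph{Main obstacle.} The principal difficulty is that membership in the finite residual does not pass to subgroups along inclusions: even knowing that the image of $\Theta_7$ lies in the finite residual of $\MCG(W_{g,1}, \partial)$, or of $\MCG_d$, is insufficient. One must exhibit each $y_N$ as a genuine element of $\mathrm{Im}(\alpha)$, i.e.\ as an actual monodromy. This demands combining L{\"o}nne's presentation of $\Mon_d$ with the geometry of Lefschetz pencils to produce enough "geometric" Dehn twists supported in $W_{g,1}$, and arranging the embedded $W_{g,1}$ so that it simultaneously hosts all the plumbing configurations required for a given $N$. Verifying this compatibility --- that the $N$-th root diffeomorphisms used in the $W_{g,1}$ analysis can be assembled entirely out of vanishing-cycle monodromies, uniformly in $N$ --- is the crux of the argument.
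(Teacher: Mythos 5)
Your upper bound is correct and is essentially the paper's (residual finiteness of $\mathrm{Aut}(\pi_3(X_d),\lambda,\mu)$ via reduction mod $n$, then functoriality of the finite residual applied to the quotient map $\mathrm{Im}(\alpha)\to \mathrm{Aut}(\pi_3(X_d),\lambda,\mu)$). The lower bound, however, has a genuine gap: the infinite-divisibility criterion you propose is sufficient but its hypothesis provably fails here, so no amount of work with Lefschetz pencils or L{\"o}nne's presentation can make this route succeed. Concretely, suppose $z\in\Theta_7/\mathrm{Ker}(\Theta_7\to\MCG_d)$ is a nontrivial element of order $e$ and $p\mid e$. If $z=y_N^N$ with $N=p^k$, then the image $\bar{y}_N$ in $\mathrm{Aut}(\pi_3(X_d),\lambda,\mu)$ is torsion, and a short computation with orders (using that the central subgroup has exponent dividing $28$) forces $\mathrm{ord}(\bar{y}_N)$ to grow without bound as $k\to\infty$. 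But $\mathrm{Aut}(\pi_3(X_d))$ has bounded torsion (the kernel of reduction modulo $3d$, say, is torsion-free), so for large $k$ no such $y_N$ exists in $\MCG_d$, let alone in $\mathrm{Im}(\alpha)$. Being in the finite residual is a much weaker condition than being an $N$-th power for all $N$.

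The paper's actual mechanism is Deligne's congruence-subgroup argument, imported via the comparison with $W_{g,1}$. One identifies the central extension $1\to\Theta_7\to\Stab_{\Gamma_{g,1}}(\ell^{\hyp}_{W_{g,1}})\to\mathrm{Sp}_{2g}^{\text{q or a}}(\bZ)\to 1$ with the extension classified by $\mu\cdot\Sigma_{\mathrm{Milnor}}$, where $\mu_*:H_2(\mathrm{Sp}_{2g}^{\text{q or a}}(\bZ);\bZ)\to\bZ$ is onto; by the results of Krannich and Randal-Williams (resting on Deligne: the class remains of full order on every congruence subgroup) the finite residual of this group is exactly the central $\Theta_7$. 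The embedding $e:W_{g,1}\hookrightarrow X_d$ induces a homomorphism $\Stab_{\Gamma_{g,1}}(\ell^{\hyp}_{W_{g,1}})\to\Stab_{\MCG_d}(\ell^{\hyp}_{X_d})$ carrying $\Theta_7$ onto $\mathrm{Im}(\Phi)$, and covariant functoriality of the finite residual places $\mathrm{Im}(\Phi)$ inside the finite residual of $\Stab_{\MCG_d}(\ell^{\hyp}_{X_d})$. Your "main obstacle" is then resolved not by producing monodromies, but by the elementary observation that for a finite-index subgroup $H\leq G$ one has $R(G)\subseteq R(H)$ (any finite-index subgroup of $H$ has finite index in $G$); since $\mathrm{Im}(\alpha)=\mathrm{Ker}(\kappa)$ has finite index in $\Stab_{\MCG_d}(\ell^{\hyp}_{X_d})$, this gives $\mathrm{Im}(\Phi)\subseteq R(\mathrm{Im}(\alpha))$ with no need to realise anything by monodromy beyond what Theorem \ref{thm:Main} already provides.
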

It follows that $\mathrm{Im}(\alpha)$ is often not residually finite, so neither is $\MCG_d$.

\begin{remark}
We use the assumption $d \geq 3$ to guarantee that $X_d$ contains a modest number of $S^3 \times S^3$ connect-summands ($d \geq 3$ implies that it contains at least 5). If $d=1$ or $2$ then $X_d$ contains no $S^3 \times S^3$ connect summands. The work of Kreck and Su shows that $\MCG_1 \cong \bZ/4$ (see also \cite[Remark II.11]{Brumfiel}) and $\MCG_2 = 0$. On the other hand $\Mon_1 = 0$ because \emph{all} degree 1 hypersurfaces are smooth, so $\mathrm{Im}(\alpha)$ is trivial if $d \leq 2$.
\end{remark}

The map $\alpha$ naturally factors through the symplectic mapping class group
$$\alpha: \Mon_d \lra \pi_0 \mathrm{Symp}(X_d) \lra \MCG_d.$$
This work began in discussions with Ailsa Keating and Ivan Smith, exploring the possibility of using this factorisation to investigate $\pi_0 \mathrm{Symp}(X_d)$. While $\mathrm{Im}(\alpha)$ clearly gives a lower bound for $\mathrm{Im}(\pi_0 \mathrm{Symp}(X_d) \to \MCG_d)$, trying to go any further seemed to produce more questions than answers on the symplectic side:

\begin{enumerate}[(i)]
\item Is the quadratic refinement $\mu$ invariant under the action of $\pi_0 \mathrm{Symp}(X_d)$?

\item Does the ``distortion of the first Pontrjagin class'' vanish for symplectomorphisms? (See Section \ref{sec:Distorsion} for this notion.)

\item Do $\Mon_d$ and $\pi_0 \mathrm{Symp}(X_d)$ have the same image in $\MCG_d$?

\item Is $\Mon_d \to \pi_0 \mathrm{Symp}(X_d)$ surjective?
\end{enumerate}
If the answer to (iv) is ``yes'' then it is for the other questions too, but I am told it is not currently accessible. With regards to (i), $\mu$ is not $\MCG_d$-invariant, and neither is it invariant for the subgroup of $\MCG_d$ which preserves the almost complex structure of $X_d$ (because this is the whole of $\MCG_d$, see Remark \ref{rem:MCGPresUnstableACStr}).

\vspace{2ex}

\noindent\textbf{Strategy.} The strategy for identifying the image of $\alpha : \Mod_d \to \MCG_d$ is fairly clear: one must produce constraints on mapping classes---which are satisfied by monodromy---to cut down the image, and then one must then show that enough mapping classes are realised by monodromy to see that the constraints are sharp. This is broadly what we will do.

The first constraint, developed in Section \ref{sec:Upper1}, is based on the observation that the universal family of smooth hypersurfaces over $\mathcal{U}_d$ is equipped with a certain (stable) tangential structure $\ell^\hyp_{X_d}$, so that $\alpha$ lands in the stabiliser $\mathrm{Stab}_{\MCG_d}(\ell^\hyp_{X_d})$ of this tangential structure. This stabiliser can be analysed following Krannich \cite{KrannichMCG} or Kupers and the author \cite{KR-WFram}, and in particular it is shown to preserve a quadratic refinement $\mu$ on $\pi_3(X_d)$, which with the work of Kreck and Su \cite{KSv3} leads to a central extension
$$0 \lra \{\substack{\text{diffeomorphisms supported} \\ \text{near $S^2 \subset X_d$}}\} \lra\mathrm{Stab}_{\MCG_d}(\ell^\hyp_{X_d}) \lra \mathrm{Aut}(\pi_3(X_d), \lambda, \mu) \lra 0.$$
To explain the left-hand term, we choose once and for all an embedding $S^2 \hookrightarrow X_d$ whose homotopy class represents a generator of $\pi_2(X_d) \cong \bZ$: the left-hand term then consists of those isotopy classes of diffeomorphisms which may be represented by diffeomorphisms supported in a tubular neighbourhood of this embedded $S^2$. This subgroup 
has been completely determined by Kreck and Su. In particular it is finite and abelian. Choosing an embedded 6-disc inside the tubular neighbourhood of $S^2 \subset X_d$ 
gives a homomorphism
$$\Phi : \pi_0(\Diff_\partial(D^6)) = \Theta_7 \lra \{\substack{\text{diffeomorphisms supported} \\ \text{near $S^2 \subset X_d$}}\}.$$

The second constraint, developed in Section \ref{sec:FurtherUpperBounds}, is the construction of a surjective map $\kappa : \mathrm{Stab}_{\MCG_d}(\ell^\hyp_{X_d}) \to \mathrm{Coker}(\Phi)$ such that $\kappa \circ \alpha$ is trivial. This map is obtained indirectly, essentially by calculating the abelianisation of $\mathrm{Stab}_{\MCG_d}(\ell^\hyp_{X_d})$. This is done by relating this stabiliser to a certain moduli space of manifolds (diffeomorphic to $X_d$ and equipped with a certain tangential structure), describing the first homology of the latter in terms of Thom spectra using the work of Galatius and the author \cite{grwstab2}, and then calculating using various techniques from stable homotopy theory.

We get lower bounds for the image of $\alpha : \Mon_d \to \MCG_d$ from three sources. Firstly, work of Krylov \cite{Krylov} quickly shows that the subgroup $\Phi(\Theta_7)$ can be realised by monodromy. Secondly, work of Beauville \cite{Beauville} (for which we supply a missing detail) fully describes the automorphisms of $H_3(X_d;\bZ)$ that can be realised by monodromy. Finally, in Section \ref{sec:LowerBounds} we explain how work of Pham \cite{Pham} and of Looijenga \cite{LooijengaFermat} concerning the action of groups of $d$th roots of unity on $X_d$ can be used to extend Beauville's work to fully describe the automorphisms of $\pi_3(X_d)$ which can be realised by monodromy.

\vspace{2ex}

\noindent\textbf{Acknowledgements.} I am grateful to A.~Keating and I.~Smith for many useful discussions surrounding this work and their feedback on earlier versions, to R.~Hain and D.~Ranganathan for comments, and to the referee for their feedback. I was supported by the ERC under the European Union’s Horizon 2020 research and innovation programme (grant agreement No.\ 756444).

\section{Some recollections}

\subsection{Algebraic topology of hypersurfaces}\label{sec:TopOfHyp}

Let us explain our notation for the cohomology of a degree $d$ smooth hypersurface $X \subset \bC\bP^4$. Writing $x \in H^2(X;\bZ)$ for the restriction from $\bC\bP^4$ of the hyperplane class, by Poincar{\'e} duality there is a unique class $y \in H^4(X;\bZ)$ such that $\int_X x \cdot y=1$. As $\int_X x^3 = d$, it follows that $x^2 = d  y$. Then the even cohomology  of $X$ is
$$H^0(X;\bZ) = \bZ\{1\} \quad H^2(X;\bZ) = \bZ\{x\} \quad H^4(X;\bZ) = \bZ\{y\} \quad H^6(X;\bZ) = \bZ\{x y\}.$$
The only odd cohomology is $H^3(X;\bZ)$, which is equipped with the nondegenerate antisymmetric form $(a,b) \mapsto  \int_X a \cdot b$.

The inclusion $X \subset \bC\bP^4$ is 3-connected by the Lefschetz hyperplane theorem. Combined with the fact that $\pi_3(\bC\bP^4)=\pi_4(\bC\bP^4)=0$ and the Hurewicz theorem, this gives isomorphisms
$$\pi_3(X) \overset{\sim}\longleftarrow \pi_4(\bC\bP^4, X) \overset{\sim}\lra H_4(\bC\bP^4, X ; \bZ).$$
The long exact sequence on homology for this pair takes the form
$$\cdots \lra H_4(X;\bZ) \lra H_4(\bC\bP^4;\bZ) \lra H_4(\bC\bP^4, X ; \bZ) \lra H_3(X;\bZ) \lra 0$$
and so gives an extension
\begin{equation}\label{eq:pi3defn}
0 \lra \bZ/d\{\eta\} \lra \pi_3(X) \overset{h}\lra H_3(X;\bZ) \lra 0.
\end{equation}
The subgroup $\bZ/d$ is generated by the Hopf map $\eta : S^3 \to S^2$ composed with the generator of $\pi_2(X) \cong H_2(X;\bZ) \cong \bZ$ dual to $x \in H^2(X;\bZ)$. The intersection form of $X$ induces an antisymmetric form $\lambda$ on $\pi_3(X)$, whose radical is precisely the subgroup $\bZ/d\{\eta\}$.

The definition of $X$ as a hypersurface gives an isomorphism of complex vector bundles $TX \oplus \mathcal{O}(d)\vert_X \oplus \underline{\bC} = \mathcal{O}(1)^{\oplus 5}\vert_X$, and so its total Chern class is
\begin{align*}
c(TX) = \tfrac{(1+x)^5}{(1+d  x)} &= 1 + (5 - d) x + (d^2 - 5 d + 10) x^2 + (-d^3 + 5 d^2 - 10 d + 10) x^3\\
&= 1 + (5 - d) x + (d^2 - 5 d + 10) d y + (-d^3 + 5 d^2 - 10 d + 10) d x y.
\end{align*}
As the third Chern class of $TX$ is also the Euler class of this vector bundle, we see that $\chi(X) = (-d^3 + 5 d^2 - 10 d + 10) d$ and so
$$\mathrm{rank} \, H^3(X;\bZ) = 4 -  (-d^3 + 5 d^2 - 10 d + 10) d = d^4-5d^3+10d^2-10d + 4.$$
We write $2g$ for this number: by a theorem of Wall \cite[Theorem 1]{WallV} there is a decomposition $X \cong X' \# g(S^3 \times S^3)$ for some 6-manifold $X'$; it will be useful to know that $g \geq 5$ as long as $d \geq 3$. The class $c_1(TX) = (5-d) x$ reduces modulo 2 to $w_2(TX)$, so $X$ is Spin if and only if $d$ is odd. Its first Pontrjagin class is 
\begin{align*}
p_1(TX) &= - c_2(TX \otimes \bC) = -c_2 (TX \oplus \overline{TX}) = -(2c_2(TX) - c_1(TX)^2)\\
&= (5-d^2)d y.
\end{align*}

\subsection{Monodromy}

Beauville \cite{Beauville} has studied the monodromy action of the universal family of degree $d$ hypersurfaces on the middle cohomology, and his results in particular apply to the composition
\begin{equation}\label{eq:Beauville}
\Mon_d \overset{\alpha}\lra \MCG_d \lra \mathrm{Aut}(H^3(X_d;\bZ), (a,b) \mapsto  a\cdot b).
\end{equation}
He explains that the collection $\Delta_{X_d} \subset H^3(X_d;\bZ)$ of vanishing cycles for a Lefschetz fibration containing $X_d$ forms a ``réseau évanescent'', and that for $d \geq 3$ $X_d$ admits a deformation to a $E_6$-singularity, so \cite[Théorème 3]{Beauville} applies to say that \eqref{eq:Beauville} is either 
\begin{enumerate}[(i)]
\item surjective, or 
\item has image the stabiliser of some quadratic refinement of $- \cdot -$.
\end{enumerate}
It is an easy algebraic exercise to see that distinct quadratic refinements have distinct stabilisers, so in the latter case there is a canonical quadratic refinement $q_{X_d} : H^3(X_d;\bZ) \to \bZ/2$ associated to $X_d$.

Beauville then argues that these two cases correspond to $d$ being even or odd respectively. For $d$ even he shows that there is indeed no $\Mon_d$-invariant quadratic refinement, so case (i) holds. For $d$ odd he refers to work of Browder \cite{Browder} or Wood \cite{Wood} for the existence of a quadratic refinement, however we are concerned with dimension 3 and in the exceptional dimensions $1$, $3$, and $7$ those results do not apply\footnote{This is explicit in \cite[Theorem B]{Browder}. On the other hand \cite[Theorem 2]{Wood} calculates the Kervaire invariant of $X_d$ without seeming to concern itself with whether this invariant is defined, but the definition at the end of \S 1 defines the zero quadratic function in dimensions 1, 3, or 7. However, see Remark \ref{rem:QuadRefExists}.}. In fact, as Beauville explains \cite[p.\ 15]{Beauville} there is no $\MCG_d$-invariant quadratic refinement, so any construction of $q_{X_d}$ cannot be by plain differential topology. Nonetheless Beauville is correct that case (ii) holds when $d$ is odd: one contribution of this paper is to produce, in Section \ref{sec:SurgeryKernel}, the required $\Mon_d$-invariant quadratic refinement. Pending this detail, we record Beauville's result as follows.

\begin{theorem}[Beauville \cite{Beauville}]\label{thm:Beauville}
For $d \geq 3$ the map
$$\Mon_d \lra \begin{cases}
\mathrm{Aut}(H^3(X_d;\bZ), (a,b) \mapsto a \cdot b, q_{X_d}) & d \text{ odd}\\
\mathrm{Aut}(H^3(X_d;\bZ), (a,b) \mapsto a \cdot b) & d \text{ even}
\end{cases}$$
is surjective.\qed
\end{theorem}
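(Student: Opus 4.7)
The plan is to apply Beauville's general Théorème 3 from \cite{Beauville}, which gives a dichotomy for the monodromy image on the middle cohomology, provided that (a) the vanishing cycles form a ``réseau évanescent'' in $H^3(X_d;\bZ)$ and (b) the family admits an $E_6$-type degeneration. Both hypotheses are verified for the universal family of smooth degree-$d$ hypersurfaces in $\bC\bP^4$, $d \geq 3$, in Beauville's paper, and I would use them as stated; the conclusion is that the image in $\mathrm{Aut}(H^3(X_d;\bZ), (a,b)\mapsto a\cdot b)$ is either the full automorphism group of the intersection form, or the stabiliser of a canonical quadratic refinement of it.

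The remaining task is to determine which of the two cases occurs in each parity. For $d$ even, I would reproduce Beauville's argument exhibiting vanishing cycles whose Picard--Lefschetz transvections act on the affine space of candidate quadratic refinements without a common fixed point; hence case (i) holds and $\Mon_d$ surjects onto the full symplectic automorphism group. This part is self-contained within \cite{Beauville}.

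For $d$ odd, case (ii) must hold, and this requires producing a $\Mon_d$-invariant quadratic refinement $q_{X_d}$. The main obstacle, and the only real gap in Beauville's treatment, is that the standard Arf-type refinements of Browder \cite{Browder} and Wood \cite{Wood} are defined only outside the exceptional dimensions $1,3,7$; in dimension $3$ their recipes yield the zero function, which cannot refine a nonzero form. Moreover, Beauville's remark that no $\MCG_d$-invariant refinement exists on $X_d$ shows that any construction of $q_{X_d}$ must use extra data beyond the smooth topology of $X_d$. The plan, executed in Section \ref{sec:SurgeryKernel}, is to exploit the additional tangential structure $\ell^\hyp_{X_d}$ coming from the projective embedding $X_d \subset \bC\bP^4$, and to read $q_{X_d}$ off an appropriate surgery-kernel calculation within this enriched category. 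Once such a refinement is constructed and its $\Mon_d$-invariance is verified, the odd-degree case of the theorem follows from Beauville's dichotomy, and together with the even case above this gives the stated surjectivity.
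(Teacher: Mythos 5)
Your proposal matches the paper's treatment essentially exactly: both invoke Beauville's Théorème 3 via the réseau évanescent and $E_6$-degeneration hypotheses, both defer the even case to Beauville's non-existence argument for an invariant refinement, and both identify the failure of Browder/Wood in dimension $3$ as the genuine gap for $d$ odd, filled by the surgery-kernel quadratic refinement attached to the $\theta^{\hyp}$-structure in Section \ref{sec:SurgeryKernel}. No substantive differences.
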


\subsection{The mapping class group}

Kreck and Su \cite{KSv3} have made a detailed study of the mapping class groups of 6-manifolds which ``look like 3-dimensional complete intersections'': of course a 3-dimensional hypersurface such as $X_d$ is such a manifold. For us the most useful form of their result is as follows \cite[Theorem 2.6]{KSv3}. There are extensions
\begin{equation}\label{eq:KS}
\begin{tikzcd}[row sep=small]
1 \rar & \mathrm{SMCG}_d \rar & \MCG_d \rar & \mathrm{Aut}(\pi_3(X_d), \lambda) \rar & 1\\
1 \rar & \mathrm{K}_d \rar & \mathrm{SMCG}_d \rar{v_p} & H^3(X_d ; \bZ) \rar & 1.
\end{tikzcd}
\end{equation}
The first defines the subgroup $\mathrm{SMCG}_d$, and expresses the fact that all automorphisms of $\pi_3(X_d)$ which preserve $\lambda$ are are the identity on its radical $\bZ/d\{\eta\}$ can be realised by diffeomorphisms. The second describes $\mathrm{SMCG}_d$ as an extension of $H^3(X_d ; \bZ)$ by a certain finite abelian group $\mathrm{K}_d$ (we shall describe the map $v_p$ later: experts will understand the idea from the phrase ``distortion of the first Pontrjagin class''). To describe the subgroup $\mathrm{K}_d$ we choose once and for all an embedding $S^2 \hookrightarrow X_d$ whose homotopy class generates $\pi_2(X_d) \cong \bZ$ (the isotopy class of this embedding is unique by a theorem of Haefliger \cite{Haefliger}): then $\mathrm{K}_d$ is the subgroup of those isotopy classes of diffeomorphisms which may be represented by diffeomorphisms supported in a tubular neighbourhood of this embedded sphere. 
This includes those isotopy classes of diffeomorphisms which may be represented by diffeomorphisms supported in an embedded 6-disc, which defines a homomorphism $\Phi : \pi_0(\Diff_\partial(D^6)) \cong \Theta_7 \cong \bZ/28 \to \mathrm{K}_d$. The group $\mathrm{K}_d$ has been completely determined by Kreck and Su, and we summarise what we need from their calculations as follows.

\begin{lemma}[Kreck--Su \cite{KSv3}]\label{lem:WhatIsKd}
The map $\Phi : \Theta_7 \to \mathrm{K}_d$ has
$$\mathrm{Ker}(\Phi) \cong \begin{cases}
\bZ/4 & d \equiv 2,4,6,10,12,14 \mod 16\\
\bZ/2 & d \equiv 3,5,8,11,13 \mod 16\\
0 & d \equiv 0, 1, 7, 9, 15 \mod 16
\end{cases} \oplus \begin{cases}
\bZ/7 & d \not\equiv 0 \mod 7\\
0 & d \equiv 0 \mod 7
\end{cases}$$
and
$$\mathrm{Coker}(\Phi) \cong \begin{cases}
\bZ/2 & d \equiv 0 \mod 4\\
0 & d \not\equiv 0 \mod 4
\end{cases} \oplus \begin{cases}
\bZ/3 & d \equiv 0 \mod 3\\
0 & d \not\equiv 0 \mod 3.
\end{cases}$$
\end{lemma}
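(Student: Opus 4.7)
The plan is to extract this lemma directly from Kreck--Su's explicit determination of $\mathrm{K}_d$ in \cite{KSv3}. By isotopy extension together with Haefliger's uniqueness theorem for the embedding $S^2 \hookrightarrow X_d$, the group $\mathrm{K}_d$ depends only on the isomorphism type of the tubular neighbourhood $\nu(S^2)$, which is the unit disc bundle of the normal bundle $\nu$. Using the Chern class formula $c(TX_d) = 1 + (5-d)x + \cdots$ of Section \ref{sec:TopOfHyp} together with the fact that the generator of $\pi_2(X_d)$ pairs with $x$ to $1$ and that $c_1(TS^2) = 2$, one computes $c_1(\nu) = (5-d) - 2 = 3-d$; as complex rank-$2$ bundles on $S^2$ are classified by their first Chern class, this pins down $\nu$ in terms of $d$.

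Next, I would locate in \cite{KSv3} their explicit structure theorem for $\mathrm{K}_d$ as an abelian group, together with the natural map from $\Theta_7 = \pi_0 \Diff_\partial(D^6)$ induced by extending diffeomorphisms of an embedded $D^6 \subset \nu(S^2)$ by the identity. The calculation then splits cleanly by prime. Since $\Theta_7 \cong \bZ/4 \oplus \bZ/7$ has no $3$-torsion, the $\bZ/3$ summand of $\Coker(\Phi)$ appearing when $3 \mid d$ must reflect genuine $3$-torsion in $\mathrm{K}_d$ itself, and is a direct read-off from their classification.

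At the prime $7$: the $7$-part of $\mathrm{K}_d$ is $\bZ/7$ when $7 \mid d$ and trivial otherwise, so $\Phi$ is an isomorphism on $7$-parts in the former case and has kernel $\bZ/7$ in the latter (with no $7$-primary cokernel in either case). At the prime $2$: the intricate residue conditions modulo $16$ arise from Kreck--Su's description of the $2$-part of $\mathrm{K}_d$ in terms of signature, Spin structure, and the ``distortion of the first Pontrjagin class'' $v_p$, all of which depend on $d \bmod 16$ through the formulas of Section \ref{sec:TopOfHyp}; in particular through the values of $\chi(X_d)$ and of $p_1(TX_d) = (5-d^2) d \cdot y$ modulo suitable powers of $2$.

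The main obstacle is simply careful bookkeeping: matching Kreck--Su's generator of $\pi_2(X_d)$ with ours (up to sign this is forced by Haefliger), aligning notational conventions, and translating their residue conditions into the explicit mod $16$, $7$, $4$, and $3$ form stated above. No new geometry or topology is required; the lemma is an explicit reading of their theorem.
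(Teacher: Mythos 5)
Your overall strategy---read $\mathrm{K}_d$ and the map $\Phi$ off from Kreck--Su's explicit computation and do the residue bookkeeping prime by prime---is exactly what the paper does: its proof is likewise a sketch, pointing to table (8.1) on p.~42 of \cite{KSv3} (with $k = \tfrac{1}{4}(5-d^2)d$) for the 2-torsion when $d$ is odd, to the table on p.~51 for $d$ even, and to the surrounding discussion for the 3- and 7-torsion. Your treatment of the 7- and 3-primary parts is correct, and deferring the mod-16 analysis to their tables is what the paper does too.

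However, your opening framing contains a claim that is not merely unjustified but contradicted by the very statement you are proving: $\mathrm{K}_d$ does \emph{not} depend only on the isomorphism type of the tubular neighbourhood $\nu(S^2)$. The normal bundle of $S^2 \subset X_d$ is an oriented real rank-4 bundle over $S^2$, hence classified by $\pi_1(\SO(4)) \cong \bZ/2$ (equivalently by $w_2(\nu) \equiv 1+d \bmod 2$); it carries no preferred complex structure pinned down by an integer $c_1(\nu) = 3-d$. The tubular neighbourhood therefore sees only the parity of $d$, whereas the answer in the lemma depends on $d$ modulo 16, 7, 4, and 3. The point is that $\mathrm{K}_d$ is the \emph{image} of $\pi_0\Diff_c(\nu(S^2))$ in $\MCG_d$, and the kernel of that map depends on the global topology of $X_d$---this is exactly where the signature, Spin, and $p_1$ data enter Kreck--Su's tables. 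Since you defer to their structure theorem in any case, this slip does not derail the argument, but anyone executing your outline should not expect the computation to reduce to data on $\nu(S^2)$ alone.
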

\begin{proof}[Proof sketch]
The bulk of this concerns the 2-torsion, which in the $d$ odd case, for example, requires a case-by-case analysis of table (8.1) on p.\ 42 of \cite{KSv3} reduced modulo 4, with $k := \tfrac{1}{4}(5-d^2)d$. Looking at the 4th and 5th columns in particular, one sees that the quotient of $(\bZ/4)^3$ by the relations given by the columns of this table is generated by the class of the first basis vector. The 7-torsion is immediate from the discussion just before this table, as is the 3-torsion. The $d$ even case is analogous, using the table on p.\ 51 of \cite{KSv3}.
\end{proof}

\section{Upper bounds: tangential structures and quadratic forms}\label{sec:Upper1}

The goal of this section is as follows. We shall describe a certain tangential structure $\theta^{\hyp}$ and a $\theta^{\hyp}$-structure $\ell^{\hyp}_{X_d}$ on $X_d$, which is preserved up to homotopy by monodromy: thus there is a factorisation
$$\alpha : \Mon_d \lra \Stab_{\MCG_d}(\ell^{\hyp}_{X_d}) \leq  \MCG_d.$$
We will then use the work of Kreck and Su to analyse the subgroup $\Stab_{\MCG_d}(\ell^{\hyp}_{X_d})$, and show that the extensions \eqref{eq:KS} simplify to a single central extension
\begin{equation*}
\begin{tikzcd}[row sep=small]
1 \rar & \mathrm{K}_d \rar & \Stab_{\MCG_d}(\ell^{\hyp}_{X_d}) \rar & \mathrm{Aut}(\pi_3(X_d), \lambda, \mu) \rar & 1,
\end{tikzcd}
\end{equation*}
for $\mu : \pi_3(X_d) \to \bZ/2$ a certain quadratic refinement which we will describe.

\subsection{Tangential structures}\label{sec:TS}
A \emph{stable tangential structure} is a Serre fibration $\bar{\theta} : B \to B\OO$, and a \emph{$\bar{\theta}$-structure} on a manifold $M$ is a choice of lift $\ell : M \to B$ of the map $\tau^s_M : M \to B\OO$ classifying the stable tangent bundle of $M$. Familiar examples are $\bar{\theta}^+ : B\SO \to B\OO$, for which $\bar{\theta}^+$-structures are orientations, or $\bar{\theta}^{\sfr} : E\OO \to B\OO$, for which $\bar{\theta}^{\sfr}$-structures are stable framings. 

As the map $\tau_M$ is not literally unique---it is only unique up to coherent homotopies---in order to study the action of $\Diff(M)$ on $\bar{\theta}$-structures it is best to take another model. If $M$ is $d$-dimensional then we define a corresponding (unstable) tangential structure via the pullback
\begin{equation}\label{eq:UnstableTS}
\begin{tikzcd}
B_d \rar \dar{\theta} & B \dar{\bar{\theta}}\\
B\OO(d) \rar & B\OO,
\end{tikzcd}
\end{equation}
so that there is a $d$-dimensional vector bundle $\theta^*\gamma_d \to B_d$ classified by $\theta$. Then we define the space of $\theta$-structures on $M$ to be the space of bundle maps
$$\Theta(M) := \mathrm{Bun}(TM, \theta^*\gamma_d),$$
i.e.\ continuous maps $TM \to \theta^*\gamma_d$ which cover a map $M \to B_d$ and are linear isomorphisms on each fibre. This has an evident right action of $\Diff(M)$, by precomposing with the differential of a diffeomorphism. We write
$$\theta(M) := \pi_0 \Theta(M),$$
which inherits a right $\pi_0\Diff(M)$-action.

Choosing a map $\tau_M : M \to B\OO(d)$ classifying the tangent bundle of $M$, it is easy to show (using that $\mathrm{Bun}(TM, \gamma_d) \simeq *$, which is the defining property of the universal bundle $\gamma_d \to B\OO(d)$) that $\Theta(M)$ is homotopy equivalent to the space of lifts in the diagram
\begin{equation*}
\begin{tikzcd}
 & B_d \dar{\theta}\\
M \rar{\tau_M} \arrow[ru, dashed] & B\OO(d),
\end{tikzcd}
\end{equation*}
and so by cartesianness of \eqref{eq:UnstableTS} it is equivalent to the space of lifts of the map $\tau^s_M$ along $\bar{\theta}$, as we originally defined a $\bar{\theta}$-structure to be. By the cartesianness of \eqref{eq:UnstableTS} we therefore blur the distinction between $\bar{\theta}$- and $\theta$-structures.

\begin{remark}
In the discussion above it is clear that it does not matter that $\theta$ is obtained by pulling back from a fibration over $B\OO$, and it is common to refer to a fibration over $B\OO(d)$ as a tangential structure.
\end{remark}

\subsection{The tangential structure $\theta^{\hyp}$}\label{sec:ThetaHyp}

There are two natural tangential structures in the situation we are considering. The tangent bundle of a degree $d$ hypersurface $X \subset \bC\bP^4$ satisfies $TX \oplus \mathcal{O}(d)\vert_X  = T\mathbb{CP}^4\vert_X$ and so using that $T\mathbb{CP}^4 \oplus \underline{\bC} \cong \mathcal{O}(1)^{\oplus 5}$ we have
\begin{equation}\label{eq:TangBundle}
TX \oplus \mathcal{O}(d)\vert_X \oplus \underline{\bC} \cong \mathcal{O}(1)^{\oplus 5}\vert_X.
\end{equation}
If we define maps
$$\bar{\theta}^{\hyp}: \bC\bP^\infty \xrightarrow{\mathcal{O}(1)^{\oplus 5} - \mathcal{O}(d)-\underline{\bC}^4} B\U \overset{\bar{\theta}^\bC}\lra B\OO,$$
which we implicitly convert into Serre fibrations, then the identity \eqref{eq:TangBundle} shows that $X$ has a canonical $\theta^{\hyp}$-structure $\ell_X^{\hyp}$, and so also a canonical $\theta^\bC$-structure $\ell_X^\bC$.

More generally, the universal family of smooth hypersurfaces $\mathcal{X}_d \to \mathcal{U}_d$ comes with a fibrewise embedding $\mathcal{X}_d \subset \mathcal{U}_d \times \mathbb{CP}^4$, and the vertical tangent bundle of this family satisfies $T_v \mathcal{X}_d \oplus \mathcal{O}(d)\vert_{\mathcal{X}_d} \oplus \underline{\bC} \cong \mathcal{O}(1)^{\oplus 5}\vert_{\mathcal{X}_d}$, giving a lift of the map $\tau_v : \mathcal{X}_d \to B\OO$ classifying $T_v \mathcal{X}_d$ along $\bar{\theta}^{\hyp}$, i.e.\ a $\theta^{\hyp}$-structure on the whole family. This implies that $\theta^{\hyp}$-structure $\ell_{X_d}^{\hyp}$ is preserved by $\Mon_d$. Thus the $\theta^\bC$-structure $\ell_{X_d}^\bC$ is also preserved by $\Mon_d$, which is unsurprising as it is even preserved by the symplectic mapping class group $\pi_0 \mathrm{Symp}(X_d)$. Somewhat surprisingly we have the following.

\begin{lemma}\label{lem:ThetaCFixedByMCG}
The $\theta^\bC$-structure $\ell_{X_d}^\bC$ is preserved by $\MCG_d$.
\end{lemma}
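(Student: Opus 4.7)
My plan is to separate the argument into two independent steps: \textbf{(a)} every orientation-preserving diffeomorphism $\phi$ of $X_d$ acts trivially on $H^{\text{even}}(X_d;\bZ)$, so it preserves the Chern classes of $\ell^\bC_{X_d}$; and \textbf{(b)} any two $\theta^\bC$-structures on $X_d$ that induce the complex orientation and have the same Chern classes are homotopic as $\theta^\bC$-structures. Since $\phi$ is orientation-preserving, both $\ell^\bC_{X_d}$ and $\phi^*\ell^\bC_{X_d}$ induce the complex orientation, so \textbf{(a)} and \textbf{(b)} together immediately give the lemma.

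For step \textbf{(a)}, the point is to show $\phi^*x = x$ in $H^2(X_d;\bZ) = \bZ\{x\}$. Since $\phi$ preserves the fundamental class, $\phi^*(xy) = xy$ in $H^6(X_d;\bZ) = \bZ\{xy\}$. Combined with the relation $x^3 = d\cdot xy$ from Section \ref{sec:TopOfHyp} (using $d>0$), the equation $(\phi^*x)^3 = \phi^*(x^3) = x^3$ forces $\phi^*x = x$ (rather than $-x$). Then $x^2 = dy$ gives $\phi^*y = y$, so $\phi^*$ is the identity on all of $H^{\text{even}}(X_d;\bZ)$; in particular it fixes each Chern class $c_i(TX_d)$.

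For step \textbf{(b)}, I would use obstruction theory on the homotopy fiber $\SO/\U$ of $B\U\to B\SO$. From the long exact sequence of this fibration and Bott periodicity, $\pi_i(\SO/\U)=0$ for $i\in\{0,1,3,4,5\}$ and $\pi_2(\SO/\U) = \pi_6(\SO/\U) = \bZ$, with the inclusion $\SO/\U\to B\U$ inducing isomorphisms on $\pi_2$ and $\pi_6$. The only Postnikov $k$-invariant joining the two non-trivial layers lies in $H^7(\bC\bP^\infty;\bZ)=0$, so the $6$-type of $\SO/\U$ splits as $K(\bZ,2)\times K(\bZ,6)$. For the $6$-manifold $X_d$, the set of $\theta^\bC$-lifts of $\tau:X_d\to B\SO$ is therefore a torsor over $[X_d, \SO/\U] \cong H^2(X_d;\bZ)\oplus H^6(X_d;\bZ)$, and the Whitney sum formula $c(V+W) = c(V)\cdot c(W)$ applied to the virtual bundle represented by a torsor shift $(\alpha_2,\alpha_6)$ gives the change of Chern classes as $(c_1,c_2,c_3)\mapsto (c_1+\alpha_2,\ c_2+c_1\alpha_2,\ c_3+c_2\alpha_2+\alpha_6)$. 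If $c_1$ is unchanged, then $\alpha_2=0$; if in addition $c_3$ is unchanged, then $\alpha_6=0$. Hence the torsor element is trivial and the two lifts are homotopic.

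The main technical step is verifying the precise form of the Chern-class shift under the torsor action, which amounts to identifying the $K(\bZ,2)$-summand of the $6$-type of $\SO/\U\to B\U$ with the classifier of the virtual line-bundle class $[L]-\underline{\bC}$; once this is in place, the rest is a standard obstruction-theory calculation.
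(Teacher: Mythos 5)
Your proposal is correct and takes essentially the same route as the paper: both arguments reduce the lemma to the facts that the set of stable almost complex lifts is a torsor over $[X_d,\SO/\U]$, that this group sits in degrees $2$ and $6$ and is torsion-free (so the difference class is detected by the changes in $c_1$ and $c_3$), and that orientation-preserving diffeomorphisms act trivially on $H^{\mathrm{even}}(X_d;\bZ)$; the paper phrases the computation via the Atiyah--Hirzebruch spectral sequence and the Chern--Dold character where you use the (split) Postnikov tower of $\SO/\U$, and you additionally spell out the cup-product argument for step (a) that the paper only asserts. One small correction: $\SO/\U\to B\U$ is \emph{not} an isomorphism on $\pi_2$ --- the long exact sequence gives $0\to\pi_2(\SO/\U)\to\pi_2(B\U)=\bZ\to\pi_2(B\SO)=\bZ/2\to 0$, so the generator of $\pi_2(\SO/\U)$ maps to twice a generator and the $K(\bZ,2)$-summand classifies a virtual bundle with $c_1$ equal to \emph{twice} the tautological class rather than $[L]-\underline{\bC}$; this only rescales your Chern-class shift by $2$ and, since $H^2(X_d;\bZ)$ and $H^6(X_d;\bZ)$ are torsion-free, does not affect the injectivity you need.
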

\begin{proof}
As $\bar{\theta}^\bC : B\U \to B\OO$ is a fibration of $H$-spaces (in fact of infinite loop spaces) with fibre the $H$-space $\OO/\U$, the set $\theta^\bC(X_d)$ is a torsor for the abelian group $[X_d, O/U]$, with action
$$- \cdot - :  \theta^\bC(X_d) \times [X_d, \OO/\U] \lra \theta^\bC(X_d).$$
An orientation-preserving diffeomorphism $\varphi : X_d \to X_d$ then has $ \ell^\bC_{X_d} \circ D\varphi = \ell^\bC_{X_d} \cdot \delta^\bC(\varphi) $ for a unique $\delta^\bC(\varphi) \in [X_d, \OO/\U]$, and this defines a function
$$\delta^\bC : \MCG_d \lra [X_d, \OO/\U]$$
which is a (right) crossed homomorphism with respect to the right action of $\MCG_d$ on $[X_d, \OO/\U]$ by precomposition.

Using the Atiyah--Hirzebruch spectral sequence to calculate $[X_d, \OO/\U]$, we see it has a filtration with filtration quotients
$$H^0(X_d ; \bZ/2) = \bZ/2, \quad H^2(X_d;\bZ) = \bZ\{x\}, \quad H^6(X_d;\bZ) = \bZ\{xy\}.$$
The projection of $\delta^\bC(\varphi)$ to the first quotient $H^0(X_d ; \bZ/2)$ is always trivial, as this records whether $\varphi$ preserves orientation. Neglecting this first quotient we see that the remainder $[X_d, \SO/\U]$ of $[X_d, \OO/\U]$ is torsion-free, and the Chern--Dold character gives an isomorphism
$$\mathrm{ch} : [X_d, \SO/\U] \otimes \bQ \overset{\sim}\lra H^{4*+2}(X_d, \bQ).$$
In other words $\delta^\bC(\varphi)$ is trivial if and only if $\varphi$ fixes $c_1(X_d)$ and $c_3(X_d)$. But all orientation-preserving diffeomorphisms act trivially on $H^{even}(X_d;\bZ)$, so they all preserve $c_1(X_d)$ and $c_3(X_d)$.
\end{proof}

\begin{remark}\label{rem:MCGPresUnstableACStr}
There is an unstable tangential structure $\theta^{ac}: B\U(3) \to B\OO(6)$, classifying almost complex structures on 6-manifolds: $\bar{\theta}^\bC : B\U \to B\OO$ classifies \emph{stable} almost complex structures. Although we will not need it, we record the fact that $\MCG_d$ also preserves any almost complex structure. 
The following argument, along lines suggested by the referee, replaces an earlier more complicated one.

First, we choose a lift $\theta^{ac, +} : B\U(3) \to B\SO(6)$: as $\MCG_d$ preserves orientations by definition, we may as well study its action on lifts along this map. There is a homeomorphism $\SO(6)/\U(3) \cong \mathbb{CP}^3$ (arising from $\mathrm{Spin}(6) \cong \mathrm{SU}(4)$, and identifying the preimage of $\U(3) \leq \SO(6)$ in $\mathrm{Spin}(6)$ with $\mathrm{S}(\U(1) \times \U(3)) \leq \mathrm{SU}(4)$). The maps classifying the the first Chern class and second Stiefel--Whitney class give the bottom homotopy commutative square in the diagram
\begin{equation*}
\begin{tikzcd}
\mathbb{CP}^3 \arrow[r, dashed] \dar & K(\bZ,2) \dar{2} \\
B\U(3) \rar{c_1} \dar{\theta^{ac, +}} & K(\bZ, 2) \dar{\rho}\\
B\SO(6) \rar{w_2} & K(\bZ/2, 2),
\end{tikzcd}
\end{equation*}
and choosing a homotopy making this square commute yields the top dashed map. By considering the induced map of Serre spectral sequences we see that the top map is an isomorphism on $H_2(-;\bZ)$ and so on $\pi_2(-)$, and so is 7-connected. It follows that the map from $\Theta^{ac, +}(X_d)$ to the space of lifts
\begin{equation*}
\begin{tikzcd}
 & & K(\bZ,2) \dar{\rho}\\
X_d \arrow[rr, "w_2(X_d)"] \arrow[rru, dashed] & & K(\bZ/2,2)
\end{tikzcd}
\end{equation*}
is $(7-\dim(X_d) = 1)$-connected. In particular $\theta^{ac, +}(X_d) = \pi_0\Theta^{ac, +}(X_d)$ is canonically identified with the set of lifts of $w_2(X_d) \in H^2(X_d;\bZ/2)$ to an integral cohomology class. As $\MCG_d$ acts trivially on the second integral cohomology of $X_d$, it acts trivially on this set of lifts.
\end{remark}

It is not the case that $\MCG_d$ preserves $\ell^{\hyp}_{X_d}$, and our next goal is to determine $\Stab_{\MCG_d}(\ell^{\hyp}_{X_d})$. The methods we will use are related to those of \cite{KR-WFram, KrannichMCG}. We begin by an analysis similar to the proof of Lemma \ref{lem:ThetaCFixedByMCG}. The map $\bar{\theta}^{\hyp} : \mathbb{CP}^\infty \to B\OO$ is no longer a map of $H$-spaces, but we can proceed as follows. We have explained that $\Theta^{\hyp}(X_d)$ is homotopy equivalent to the space of lifts of $\tau^s_{X_d} : X_d \to B\OO$ along $\bar{\theta}^{\hyp}$, which shows that it fits into a homotopy cartesian square
\begin{equation*}
\begin{tikzcd}
\Theta^{\hyp}(X_d) \rar \dar & \map(X_d, \bC\bP^\infty) \dar{(\bar{\theta}^{\hyp})_*}\\
\{\tau_{X_d}\} \rar& \map(X_d, B\OO).
\end{tikzcd}
\end{equation*}
On homotopy groups this gives a sequence
$$0 = H^1(X_d ; \bZ) \lra KO^{-1}(X_d) \overset{\curvearrowright}\lra \theta^{\hyp}(X_d) \overset{\hat{x}}\lra H^2(X_d ; \bZ) \overset{(\bar{\theta}^{\hyp})_*}\lra KO^{0}(X_d)$$
which is exact exact in the sense of groups and pointed sets: the two leftmost terms are groups, the map marked $\curvearrowright$ is an action, and the orbits of this action are in bijection with $((\bar{\theta}^{\hyp})_*)^{-1}(0)$. The map $\hat{x}$ assigns to a lift $\ell^{\hyp} : X_d \to \bC\bP^\infty$ the class $(\ell^{\hyp})^*(\iota_2) \in H^2(X_d;\bZ)$. 

As an orientation-preserving diffeomorphism $\varphi : X_d \to X_d$ acts trivially on $H^2(X_d;\bZ)$ it follows that $\ell^{\hyp}_{X_d} \circ D\varphi$ and $\ell^{\hyp}_{X_d}$ lie in the same fibre of $\hat{x}$: thus there is a unique $\delta^{\hyp}(\varphi) \in KO^{-1}(X_d)$ such that $\ell^{\hyp}_{X_d} \circ D\varphi = \ell^{\hyp}_{X_d} \cdot \delta^{\hyp}(\varphi)$. As in the proof of Lemma \ref{lem:ThetaCFixedByMCG} this defines a function
$$\delta^{\hyp} : \MCG_d \lra KO^{-1}(X_d)$$
which is a crossed homomorphism with respect to the right action of $\MCG_d$ on $KO^{-1}(X_d)$ by contravariant functoriality. Using the Atiyah--Hirzebruch spectral sequence to calculate $KO^{-1}(X_d)$, we see it has a filtration with filtration quotients
$$H^0(X_d ; \pi_0(\OO)) = \bZ^\times, \quad H^3(X_d;\pi_3(\OO)).$$
As in the lemma, projection of $\delta^{\hyp}(\varphi)$ to the first quotient detects whether $\varphi$ preserves orientation, which it does, so $\delta^{\hyp}$ refines to a crossed homomorphism
$$\delta^{\hyp} : \MCG_d \lra H^3(X_d ; \pi_3(\OO)) \subset KO^{-1}(X_d).$$
We then have
$$\Stab_{\MCG_d}(\ell^{\hyp}_{X_d}) = \mathrm{Ker}(\delta^\hyp : \MCG_d \to H^3(X_d ; \pi_3(\OO))).$$

\begin{remark}
The stable tangential structure $\bar{\theta}^\hyp : \mathbb{CP}^\infty \to B\OO$ is not the most sophisticated structure with which we can endow a hypersurface $X \subset \mathbb{CP}^4$. The equation $TX \oplus \mathcal{O}(d)\vert_X = T\mathbb{CP}^4\vert_X$ shows that it may be endowed with the (unstable) tangential structure given by the homotopy equaliser
\begin{equation*}
\begin{tikzcd}
B \rar & B\U(3) \times \mathbb{CP}^4 \arrow[r,shift left=.75ex] \arrow[r,shift right=.75ex] & B\U(4)
\end{tikzcd}
\end{equation*}
of the maps classifying $\gamma_3^\bC \oplus \mathcal{O}(d)$ and $T\mathbb{CP}^4$ respectively, made into a tangential structure via the natural maps $B \to B\U(3) \times \mathbb{CP}^4 \to B\U(3) \to B\OO(6)$. It is more complicated to analyse the set of such structures on $X_d$, and it seems likely that for reasons similar to Remark \ref{rem:MCGPresUnstableACStr} there is no advantage in doing so.
\end{remark}

\subsection{Distortion of the first Pontrjagin class}\label{sec:Distorsion}

We wish to relate $\delta^\hyp$ to Kreck and Su's map $v_p : \mathrm{SMCG}_d \to H^3(X_d;\bZ)$ from \eqref{eq:KS}, which is described in terms of Sullivan's ``distortion of the first Pontrjagin class'', see \cite[\S 13]{sullivaninf}. As explained by Hain \cite[\S 5.2]{HainMCGKahler}, in the case at hand this may be implemented as follows.

Let $\varphi \in \MCG_d$ act trivially on cohomology, i.e.\ lie in the \emph{Torelli subgroup} $\mathrm{TMCG}_d := \mathrm{Ker}(\MCG_d \to \mathrm{Aut}(H^3(X_d;\bZ)))$, and let $T_\varphi$ denote its mapping torus. The long exact sequence on cohomology for the pair $(T_\varphi, X_d)$, using excision, takes the form
$$\cdots \lra H^3(X_d;\bZ) \overset{\mathrm{Id} - \varphi^* = 0}\lra H^3(X_d;\bZ) \overset{\partial}\lra H^4(T_\varphi;\bZ) \lra H^4(X_d;\bZ) \lra \cdots.$$
There is a unique $\bar{x} \in H^2(T_\varphi;\bZ)$ which restricts to $x \in H^2(X_d;\bZ)$. On $X_d$ we have $p_1(TX_d) = (5-d^2)x^2$, so the class $p_1(T(T_\varphi)) - (5-d^2)\bar{x}^2 \in H^4(T_\varphi;\bZ)$ vanishes on $X_d$ and so comes from a unique class $\Delta_{p_1}(\varphi) \in H^3(X_d;\bZ)$. This defines a function
$$\Delta_{p_1} : \mathrm{TMCG}_d \lra H^3(X_d;\bZ),$$
which is easily checked to be a homomorphism.

\begin{lemma}\label{lem:Dist1}
We have 
$$\Delta_{p_1} : \mathrm{TMCG}_d \overset{\delta^\hyp\vert_{\mathrm{TMCG}_d}}\lra H^3(X_d ; \pi_3(\OO)) \overset{(p_1)_*}\lra H^3(X_d ; \bZ).$$
Under the usual identification $\pi_3(\OO) = \bZ$, the latter map is multiplication by 2.
\end{lemma}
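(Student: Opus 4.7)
The plan is to exhibit both $\delta^\hyp|_{\mathrm{TMCG}_d}$ and $\Delta_{p_1}$ as arising from a single relative $KO$-theoretic obstruction on the mapping torus $T_\varphi$, and then to relate them via the naturality of the first Pontrjagin class.

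Fix $\varphi \in \mathrm{TMCG}_d$. The Wang sequence (using $\varphi^* = \id$ on cohomology) gives a unique extension $\bar x \in H^2(T_\varphi;\bZ)$ of $x$, which represents a map $\bar\ell: T_\varphi \to \bC\bP^\infty$ extending the map underlying $\ell^\hyp_{X_d}$. Consider the virtual stable bundle
$$\xi_\varphi := TT_\varphi - \bar\ell^*\theta^\hyp \in KO^0(T_\varphi).$$
On $X_d$, $\xi_\varphi$ is canonically trivialised by the specific isomorphism \eqref{eq:TangBundle} that constitutes the $\theta^\hyp$-structure $\ell^\hyp_{X_d}$, so it represents a class in the relative group $KO^0(T_\varphi, X_d)$. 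Since $\varphi$ acts trivially on $KO^*(X_d)$ (acting trivially on all integral cohomology of $X_d$, hence on the Atiyah--Hirzebruch filtration), the Wang sequence in $KO$-theory combined with the long exact sequence of the pair yields an isomorphism $KO^0(T_\varphi, X_d) \cong KO^{-1}(X_d)$.

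The key claim---and main obstacle---is that under this isomorphism, $[\xi_\varphi] = \delta^\hyp(\varphi)$. One presents $TT_\varphi$ and $\bar\ell^*\theta^\hyp$ as bundles assembled from $X_d \times I$ by clutching: $TT_\varphi$ has underlying bundle $TX_d$ with clutching $D\varphi$, while $\bar\ell^*\theta^\hyp$ has underlying bundle $(\ell^\hyp_{X_d})^*\theta^\hyp$ with its canonical clutching (coming from the facts that $\bar\ell$ is determined by $\bar x$ and that $\varphi^*x = x$). The isomorphism \eqref{eq:TangBundle} stably trivialises $\xi_\varphi$ on $X_d \times I$, and the clutching element in $[X_d,\OO] = KO^{-1}(X_d)$ measuring the failure of this trivialisation to descend to $T_\varphi$ is, by the very definition of $\delta^\hyp$ (the element such that $\ell^\hyp_{X_d}\cdot\delta^\hyp(\varphi) = \ell^\hyp_{X_d}\circ D\varphi$), exactly $\delta^\hyp(\varphi)$.

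Finally, one applies $p_1$. The calculation $p_1 = c_1^2 - 2c_2$ combined with the Chern class computation of Section~\ref{sec:TopOfHyp} yields $p_1(\theta^\hyp) = (5-d^2)x^2 \in H^4(\bC\bP^\infty;\bZ)$, hence $p_1(\xi_\varphi) = p_1(TT_\varphi) - (5-d^2)\bar x^2 \in H^4(T_\varphi;\bZ)$ restricts to zero on $X_d$ and represents the class $\Delta_{p_1}(\varphi) \in H^4(T_\varphi, X_d;\bZ) \cong H^3(X_d;\bZ)$ by definition. Naturality of $p_1$ therefore gives $\Delta_{p_1}(\varphi) = (p_1)_*(\delta^\hyp(\varphi))$, where $(p_1)_*: KO^{-1}(X_d) \to H^3(X_d;\bZ)$ is the Pontrjagin operation at coefficient level. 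The Atiyah--Hirzebruch filtration on $KO^{-1}(X_d)$---whose only nonzero filtration quotient in degrees $\geq 1$ is $H^3(X_d;\pi_3(\OO))$---shows that $(p_1)_*$ factors through projection to this quotient, where it is induced by $p_1: \pi_3(\OO) = \bZ \to \bZ$. The latter is classically multiplication by $2$: the Hurewicz image of the generator of $\pi_4(B\OO) \cong \pi_3(\OO)$ in $H_4(B\OO;\bZ)$ pairs with $p_1$ to give $2$ (equivalently, by Bott's integrality theorem).
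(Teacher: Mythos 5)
Your argument is correct and is essentially the paper's proof: both realise $\delta^\hyp(\varphi)$ as the unique class in $KO^{-1}(X_d)\cong KO^0(T_\varphi,X_d)$ lifting $(T(T_\varphi)-\epsilon^7)-\bar{x}^*\bar{\theta}^\hyp\in KO^0(T_\varphi)$, then apply $p_1$ to obtain a map between the two exact sequences and conclude using that $p_1:\pi_3(\OO)\to\bZ$ is multiplication by $2$. The one point to tighten is your claim that triviality of $\varphi^*$ on the Atiyah--Hirzebruch filtration quotients forces triviality on $KO^{-1}(X_d)$ --- in general a unipotent action could survive --- so one should instead invoke, as the paper does, that the extension $0\to H^3(X_d;\bZ)\to KO^{-1}(X_d)\to H^0(X_d;\bZ/2)\to 0$ is functorially split by $*\to X_d\to *$, whence $\varphi^*=\mathrm{id}$ on $KO^{-1}(X_d)$ and the lift is unique.
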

\begin{proof}
For $\varphi$ in the Torelli group, we describe $\delta^\hyp(\varphi) \in KO^{-1}(X_d)$ in a similar way to $\Delta_{p_1}$. The long exact sequence on $KO$-theory for the pair $(T_\varphi, X_d)$, using excision, takes the form
$$\cdots \lra KO^{-1}(X_d) \overset{Id - \varphi^*}\lra KO^{-1}(X_d) \overset{\partial}\lra KO^0(T_\varphi) \lra KO^0(X_d) \lra \cdots.$$
The short exact sequence $0 \to H^3(X_d;\bZ) \to KO^{-1}(X_d) \to H^0(X_d;\bZ/2) \to 0$ is functorially split by $* \to X_d \to *$, so the fact that $\varphi$ acts as the identity on cohomology shows that the left-hand map in the above sequence is zero. Using the map $\bar{x} : T_\varphi \to K(\bZ,2) = \mathbb{CP}^\infty$ we can form the class
$$(T(T_\varphi) - \epsilon^{7}) - \bar{x}^*\bar{\theta}^{\hyp} \in KO^0(T_\varphi).$$
This vanishes on $X_d$, and so comes from a unique class in $KO^{-1}(X_d)$: this is $\delta^{\hyp}(\varphi)$.

The first Pontrjagin class gives a homomorphism $p_1 : KO^0(T_\varphi) \to H^4(T_\varphi;\bZ)$, which extends to a map between the two exact sequences above. The composition $H^3(X_d;\pi_3(\OO)) \subset KO^{-1}(X_d) \overset{p_1}\to H^3(X_d;\bZ)$ is the map induced by $p_1 : \pi_3(\OO) \to \bZ$, which is well-known to be an isomorphism onto the subgroup $2 \bZ$.
\end{proof}

We now relate this to the work of Kreck and Su, namely to the map $v_p$ in the central extension
$$0 \lra \mathrm{K}_d \lra \mathrm{SMCG}_d \overset{v_p}\lra H^3(X_d;\bZ) \lra 0,$$
where $\mathrm{K}_d$ is a certain finite abelian group, and $\mathrm{SMCG}_d \leq \MCG_d$ is the subgroup of those diffeomorphisms which act trivially on $\pi_3(X_d)$. This map is made explicit by the diagram before the statement of \cite[Theorem 2.6]{KSv3} and the definition of the map $v_{x,p}$ just before \cite[Proposition 4.5]{KSv3}. Together those show that:

\begin{lemma}\label{lem:Dist2}
We have  $\Delta_{p_1}\vert_{\mathrm{SMCG}_d} = 4 \cdot v_p$.\qed
\end{lemma}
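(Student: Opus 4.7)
The statement is essentially a comparison of definitions, so the plan is to unpack Kreck and Su's construction of $v_p$ and match it directly to the definition of $\Delta_{p_1}$ given just above the lemma.

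First I would recall the explicit construction of $v_{x,p}$ from the diagram preceding \cite[Theorem 2.6]{KSv3} together with the definition given just before \cite[Proposition 4.5]{KSv3}. For $\varphi \in \mathrm{SMCG}_d$, both $v_p(\varphi)$ and $\Delta_{p_1}(\varphi)$ are extracted from the same mapping torus $T_\varphi$: one uses the unique class $\bar x \in H^2(T_\varphi;\bZ)$ restricting to the hyperplane class, the identity $p_1(TX_d) = (5-d^2)x^2$ on the fibre, and the connecting map $\partial : H^3(X_d;\bZ) \xrightarrow{\sim} \mathrm{Ker}(H^4(T_\varphi;\bZ) \to H^4(X_d;\bZ))$ arising from the long exact sequence of the pair $(T_\varphi, X_d)$ (an isomorphism because $\varphi$ acts trivially on $H^3$). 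The characteristic class that enters is the same in both cases, namely $p_1(T(T_\varphi)) - (5-d^2)\bar x^2$; the Kreck–Su class $v_{x,p}(\varphi)$ is obtained by dividing this by a normalising integer that is well-defined thanks to an integrality statement on their side, while our $\Delta_{p_1}(\varphi)$ takes the undivided class.

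Next I would read off the normalising constant: combining the conventions in the two sources shows that the Kreck–Su divisor is $4$, so that $\Delta_{p_1}(\varphi) = 4 \cdot v_p(\varphi)$ for every $\varphi \in \mathrm{SMCG}_d$. Since both sides are group homomorphisms $\mathrm{SMCG}_d \to H^3(X_d;\bZ)$ and the identification holds on the level of cocycles representing each side, the equality passes immediately to the mapping class group.

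The main (and only) obstacle I anticipate is careful translation of sign and normalisation conventions between the setup of this paper and \cite{KSv3}, in particular making sure that the hyperplane class $\bar x$, the orientation of $T_\varphi$, and the direction of the connecting homomorphism $\partial$ all match. No new geometric or homotopical input is required beyond this bookkeeping, which is why the statement appears with a \qed attached to its citation.
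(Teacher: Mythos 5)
Your proposal matches the paper exactly: the lemma is stated with a \qed because it is a direct comparison of definitions, and the paper's "proof" is precisely the citation of the diagram before \cite[Theorem 2.6]{KSv3} and the definition of $v_{x,p}$ before \cite[Proposition 4.5]{KSv3}, from which the factor of $4$ is read off. Your unpacking of both invariants via the mapping torus, the class $p_1(T(T_\varphi)) - (5-d^2)\bar{x}^2$, and the Kreck--Su normalisation is the intended argument.
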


Restricted to $\mathrm{SMCG}_d$ the crossed homomorphism $\delta^{\hyp} : \MCG_d \to H^3(X_d ; \bZ)$ is a homomorphism, so it annihilates the finite group $\mathrm{K}_d$ and hence descends to a homomorphism
$$\delta^{\hyp} : \mathrm{SMCG}_d / \mathrm{K}_d  \lra H^3(X_d ; \pi_3(\OO)).$$
\begin{corollary}\label{cor:tIsMultBy2}
This is an isomorphism onto $2\cdot H^3(X_d ; \pi_3(\OO))$.
\end{corollary}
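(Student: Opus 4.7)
The plan is to assemble the two preceding lemmas and the Kreck--Su extension. First, I would note that $\mathrm{SMCG}_d$ is contained in the Torelli group $\mathrm{TMCG}_d$: by definition it acts trivially on $\pi_3(X_d)$, hence trivially on the quotient $H_3(X_d;\bZ)$, and then trivially on $H^3(X_d;\bZ)$ by Poincar\'e duality. Thus Lemma \ref{lem:Dist1} applies on $\mathrm{SMCG}_d$ and gives
$$(p_1)_* \circ \delta^{\hyp}\vert_{\mathrm{SMCG}_d} = \Delta_{p_1}\vert_{\mathrm{SMCG}_d},$$
while Lemma \ref{lem:Dist2} identifies the right-hand side with $4 \cdot v_p$.

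Next, I would use that $(p_1)_* : \pi_3(\OO) \to \bZ$ is multiplication by $2$ onto $2\bZ \subset \bZ$, and hence injective. Since $H^3(X_d;\bZ)$ is torsion-free, the induced map $(p_1)_* : H^3(X_d;\pi_3(\OO)) \to H^3(X_d;\bZ)$ is still injective and equal to multiplication by $2$ under the identification $\pi_3(\OO) = \bZ$. Combining with the previous display gives $2\cdot \delta^{\hyp}\vert_{\mathrm{SMCG}_d} = 4 \cdot v_p$ inside $H^3(X_d;\bZ)$, and injectivity allows us to cancel the $2$ to deduce $\delta^{\hyp}\vert_{\mathrm{SMCG}_d} = 2 \cdot v_p$.

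Finally, as already observed in the paragraph before the corollary, $\delta^{\hyp}$ restricted to $\mathrm{SMCG}_d$ is a genuine homomorphism into the torsion-free group $H^3(X_d;\pi_3(\OO))$, and so automatically kills the finite group $\mathrm{K}_d$; hence it descends to $\mathrm{SMCG}_d / \mathrm{K}_d$. The Kreck--Su extension \eqref{eq:KS} identifies $v_p$ with an isomorphism $\mathrm{SMCG}_d/\mathrm{K}_d \xrightarrow{\sim} H^3(X_d;\bZ) = H^3(X_d;\pi_3(\OO))$, so $\delta^{\hyp} = 2 \cdot v_p$ is an isomorphism from $\mathrm{SMCG}_d/\mathrm{K}_d$ onto $2 \cdot H^3(X_d;\pi_3(\OO))$, as claimed.

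There is no real obstacle here --- everything is packaged into Lemmas \ref{lem:Dist1} and \ref{lem:Dist2} together with the Kreck--Su extension. The only point that requires a moment's care is keeping track of the two factors of $2$: one coming from $p_1 : \pi_3(\OO) \to \bZ$ having image $2\bZ$, the other from the coefficient $4$ in Lemma \ref{lem:Dist2}, whose ratio produces the multiplication-by-$2$ identification between $\delta^{\hyp}$ and $v_p$ and hence the index-$2$ image.
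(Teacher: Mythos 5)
Your proof is correct and follows essentially the same route as the paper: compose $\delta^{\hyp}$ with $(p_1)_*$, identify the result with $4\cdot v_p$ via Lemmas \ref{lem:Dist1} and \ref{lem:Dist2}, and use that $v_p$ descends to an isomorphism $\mathrm{SMCG}_d/\mathrm{K}_d \cong H^3(X_d;\bZ)$ together with the fact that $(p_1)_*$ is an isomorphism onto $2\cdot H^3(X_d;\bZ)$. The extra details you supply (that $\mathrm{SMCG}_d \subseteq \mathrm{TMCG}_d$, and the explicit cancellation of the factor of $2$ using torsion-freeness) are correct and are left implicit in the paper.
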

\begin{proof}
Postcomposing with the map $(p_1)_* : H^3(X_d ; \pi_3(\OO)) \to H^3(X_d ; \bZ)$, which is an isomorphism onto $2 \cdot H^3(X_d ; \bZ)$, gives the map $\Delta_{p_1}$ by Lemma \ref{lem:Dist1}, which is the map $4 \cdot v_p$ by Lemma \ref{lem:Dist2}, so is an isomorphism onto $4 \cdot H^3(X_d ; \bZ)$ as $v_p$ is surjective.
\end{proof}

In particular $\Stab_{\MCG_d}(\ell^{\hyp}_{X_d}) = \mathrm{Ker}(\delta^\hyp : \MCG_d \to H^3(X_d;\pi_3(\OO)))$ intersects $\mathrm{SMCG}_d$ precisely in $\mathrm{K}_d$, giving a half-exact sequence
\begin{equation}\label{eq:StabExtHalf}
1 \lra \mathrm{K}_d \lra \Stab_{\MCG_d}(\ell^{\hyp}_{X_d}) \lra \mathrm{Aut}(\pi_3(X_d), \lambda).
\end{equation}

\subsection{The surgery kernel}\label{sec:SurgeryKernel}

The action of $\MCG_d$ on the set $\theta^{\hyp}(X_d)$ preserves the subset $\theta^{\hyp}(X_d; x, +)$ of those $\theta^\hyp$-structures which map to $x \in H^2(X_d;\bZ)$ under $\hat{x}$ and which induce the standard orientation of $X_d$. This action descends to an action of $\mathrm{Aut}(\pi_3(X_d), \lambda)$ on the set 
$$\theta^{\hyp}(X_d; x, +)/\mathrm{SMCG}_d,$$
and the discussion of the previous section shows that this set is a torsor for $H^3(X_d ; \bZ/2) = H^3(X_d;\pi_3(\OO))/\mathrm{Im}(\delta^{\hyp}\vert_{\mathrm{SMCG}_d})$. We wish to identify this set with something more meaningful. Associated to a $\theta^{\hyp}$-structure $\ell$ there is a quadratic form $\mu_\ell : \pi_3(X_d) \to \bZ/2$ on
$$\pi_3(X_d) \overset{\sim}\longleftarrow \pi_4(\bC\bP^\infty, X_d) \overset{\sim}\lra H_4(\bC\bP^\infty, X_d;\bZ)$$
obtained by considering this as the surgery kernel for the normal map $\ell : X_d \to \bC\bP^\infty$ covered by the corresponding bundle map, see \cite[p.\ 728]{Kreck}. This $\mu_\ell$ is a quadratic refinement of the intersection form $\lambda$ in the sense that
$$\mu_\ell(a+b) = \mu_\ell(a) + \mu_\ell(b) + \lambda(a,b) \mod 2.$$
In terms of the extension \eqref{eq:pi3defn}, as the subgroup $\bZ/d\{\eta\} \leq \pi_3(X_d)$ is radical with respect to $\lambda$, the restriction $\mu_\ell\vert_{\bZ/d} : \bZ/d \to \bZ/2$ is a homomorphism. In particular if $d$ is odd then this homomorphism must be trivial, and so $\mu_\ell$ descends to a quadratic refinement of the intersection form on $H_3(X_d;\bZ)$.

\begin{remark}\label{rem:QuadRefExists}
The quadratic refinement $\mu_{\ell^{\hyp}_{X_d}}$ is $\Mon_d$-invariant, and when $d$ is odd the induced quadratic refinement on $H_3(X_d;\bZ)$ supplies the missing ingredient in Beauville's Theorem \ref{thm:Beauville}. The argument of \cite[Section 2]{Wood} applies to this quadratic refinement, showing that it has Arf invariant $0$ if $d \equiv \pm 1 \mod 8$ and Arf invariant $1$ if $d \equiv \pm 3 \mod 8$.
\end{remark}

If $d$ is even then this is not the case:

\begin{lemma}\label{lem:QuadDescends}
If $d$ is even then the homomorphism $\mu_\ell\vert_{\bZ/d} : \bZ/d \to \bZ/2$ is surjective.
\end{lemma}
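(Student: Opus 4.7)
The plan is to argue by contradiction, leveraging Beauville's non-existence of a $\Mon_d$-invariant quadratic refinement of the intersection form on $H^3(X_d;\bZ)$ when $d$ is even (as recorded in the discussion leading up to Theorem~\ref{thm:Beauville}).

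First I would assume that $\mu_\ell(\eta) = 0$ and deduce that $\mu_\ell$ descends to $H_3(X_d;\bZ)$. Since $\mu_\ell\vert_{\bZ/d\{\eta\}}$ is a homomorphism (as noted in the paragraph preceding the lemma), $\mu_\ell(c\eta) = 0$ for all $c$. As $\bZ/d\{\eta\}$ lies in the radical of $\lambda$, the quadratic-refinement identity
$$\mu_\ell(a + c\eta) \equiv \mu_\ell(a) + \mu_\ell(c\eta) + \lambda(a, c\eta) \pmod{2}$$
collapses to $\mu_\ell(a+c\eta) = \mu_\ell(a)$, so $\mu_\ell$ is constant on cosets of $\bZ/d\{\eta\}$ and descends to a map $\bar{\mu}_\ell : H_3(X_d;\bZ) \to \bZ/2$, which is visibly a quadratic refinement of the induced non-degenerate form $\bar{\lambda}$ on $H_3(X_d;\bZ)$.

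Next I would invoke the $\Mon_d$-invariance of $\mu_{\ell^{\hyp}_{X_d}}$, which holds because the $\theta^{\hyp}$-structure $\ell^{\hyp}_{X_d}$ extends over the whole universal family $\mathcal{X}_d \to \mathcal{U}_d$ as explained in Section~\ref{sec:ThetaHyp}. This makes $\bar{\mu}_\ell$ a $\Mon_d$-invariant quadratic refinement of the intersection form on $H_3(X_d;\bZ) \cong H^3(X_d;\bZ)$, which directly contradicts Beauville's observation. Hence $\mu_\ell(\eta) \neq 0$, so $\mu_\ell(\eta) = 1$ and $\mu_\ell\vert_{\bZ/d\{\eta\}}$ is surjective.

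The whole argument is essentially formal once one spots the connection to Beauville's theorem; no substantive calculation is required, and there is no real obstacle. The only point worth remarking is that $\mu_\ell(\eta)$ is independent of the choice of $\theta^{\hyp}$-structure $\ell$, since the $H^3(X_d;\bZ/2)$-torsor action on $\theta^{\hyp}$-structures modifies $\mu_\ell$ by a term that evaluates trivially on $\eta \in \ker(h)$; so it is harmless to carry out the verification for $\ell = \ell^{\hyp}_{X_d}$.
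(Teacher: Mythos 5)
Your proof is correct and is essentially the paper's argument: the paper also deduces that if $\mu_\ell\vert_{\bZ/d}$ were zero it would descend to a $\Mon_d$-invariant quadratic refinement on $H_3(X_d;\bZ)$, contradicting Beauville's Theorem \ref{thm:Beauville} for $d$ even. Your closing remark on the independence of $\mu_\ell(\eta)$ from the choice of $\ell$ is a sensible extra precision that the paper's one-line proof leaves implicit.
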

\begin{proof}
If not then $\mu_\ell$ would descend to a quadratic refinement of the intersection form on $H_3(X_d;\bZ)$, which would be $\Mon_d$-invariant, contradicting Beauville's Theorem \ref{thm:Beauville} in the case $d$ even.
\end{proof}

Let us write $\mathrm{Quad}(\pi_3(X_d), \lambda)$ for the set of quadratic refinements $\mu$ of $(\pi_3(X_d), \lambda)$ whose restriction to $\bZ/d \leq \pi_3(X_d)$ is zero if $d$ is odd, and non-zero if $d$ is even. If $\mu$ and $\mu'$ are such quadratic refinements then $\mu-\mu' : \pi_3(X_d) \to \bZ/2$ vanishes on $\bZ/d \leq \pi_3(X_d)$ so descends to a homomorphism $H_3(X_d;\bZ) \to \bZ/2$, and hence $\mathrm{Quad}(\pi_3(X_d), \lambda)$ forms a $H^3(X_d;\bZ/2)$-torsor. 

The construction above defines a function 
\begin{align*}
\Phi: \theta^{\hyp}(X_d;x, +) &\lra \mathrm{Quad}(\pi_3(X_d), \lambda)\\
\ell &\longmapsto \mu_\ell.
\end{align*}
This is equivariant when the right-hand side is equipped with the right $\mathrm{Aut}(\pi_3(X_d), \lambda)$-action by precomposition, so it descends to a $\mathrm{Aut}(\pi_3(X_d), \lambda)$-equivariant function
$$\Phi : \theta^{\hyp}(X_d;x,+)/\mathrm{SMCG}_d \lra \mathrm{Quad}(\pi_3(X_d), \lambda).$$
This is easily checked to be a map of $H^3(X_d;\bZ/2)$-torsors, so is a bijection.

\begin{corollary}
The image of $\Stab_{\MCG_d}(\ell^{\hyp}_{X_d})$ in $\mathrm{Aut}(\pi_3(X_d), \lambda)$ is the stabiliser $\mathrm{Aut}(\pi_3(X_d), \lambda, \mu)$ of the quadratic form $\mu := \mu_{\ell^{\hyp}_{X_d}}$.\qed
\end{corollary}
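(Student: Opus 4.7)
The plan is to unwind the definitions: \eqref{eq:StabExtHalf} has already identified the kernel of the map $\Stab_{\MCG_d}(\ell^{\hyp}_{X_d}) \to \mathrm{Aut}(\pi_3(X_d),\lambda)$ as $\mathrm{K}_d$, so only the image needs to be computed, and essentially all the work for this has been absorbed into the construction of the $\mathrm{Aut}(\pi_3(X_d),\lambda)$-equivariant bijection $\Phi$.

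First I would observe that the first row of \eqref{eq:KS} presents $\mathrm{Aut}(\pi_3(X_d),\lambda)$ as the quotient $\MCG_d/\mathrm{SMCG}_d$. Hence an automorphism $\bar\varphi \in \mathrm{Aut}(\pi_3(X_d),\lambda)$ lies in the image of $\Stab_{\MCG_d}(\ell^{\hyp}_{X_d})$ if and only if some lift $\varphi \in \MCG_d$ of $\bar\varphi$ satisfies $\ell^{\hyp}_{X_d} \circ D\varphi \simeq \ell^{\hyp}_{X_d}$ in $\theta^{\hyp}(X_d)$. Any two such lifts differ by precomposition with an element of $\mathrm{SMCG}_d$, which acts on $\theta^{\hyp}(X_d;x,+)$, so existence of such a lift is equivalent to $\bar\varphi$ fixing the class $[\ell^{\hyp}_{X_d}] \in \theta^{\hyp}(X_d;x,+)/\mathrm{SMCG}_d$ under the induced $\mathrm{Aut}(\pi_3(X_d),\lambda)$-action. (I would implicitly use that $\ell^{\hyp}_{X_d}$ lies in $\theta^{\hyp}(X_d;x,+)$: its underlying map $X_d \to \bC\bP^\infty$ pulls $\iota_2$ back to $x$ by construction via $X_d \subset \bC\bP^4 \subset \bC\bP^\infty$, and it induces the complex, hence standard, orientation.)

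Next I would apply the $\mathrm{Aut}(\pi_3(X_d),\lambda)$-equivariant bijection
$$\Phi : \theta^{\hyp}(X_d;x,+)/\mathrm{SMCG}_d \overset{\sim}\lra \mathrm{Quad}(\pi_3(X_d),\lambda)$$
just established. Since $\Phi([\ell^{\hyp}_{X_d}]) = \mu_{\ell^{\hyp}_{X_d}} = \mu$ by definition, equivariance translates the previous condition into $\bar\varphi \cdot \mu = \mu$, i.e.\ $\bar\varphi \in \mathrm{Aut}(\pi_3(X_d),\lambda,\mu)$. Combining the two equivalences gives the claimed equality of subgroups of $\mathrm{Aut}(\pi_3(X_d),\lambda)$.

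There is no genuine obstacle here: the corollary is essentially a formal consequence of the setup, with all substantive input concentrated into the existence and equivariance of $\Phi$. The statement is really a translation of the torsor identification $\theta^{\hyp}(X_d;x,+)/\mathrm{SMCG}_d \cong \mathrm{Quad}(\pi_3(X_d),\lambda)$ into the language of stabilisers.
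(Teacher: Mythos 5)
Your proposal is correct and follows exactly the route the paper intends: the corollary is stated with an immediate \qed because it is the formal translation, via the $\mathrm{Aut}(\pi_3(X_d),\lambda)$-equivariant bijection $\Phi : \theta^{\hyp}(X_d;x,+)/\mathrm{SMCG}_d \to \mathrm{Quad}(\pi_3(X_d),\lambda)$, of ``stabiliser of $[\ell^{\hyp}_{X_d}]$'' into ``stabiliser of $\mu$''. You have merely spelled out the lift-independence argument and the membership of $\ell^{\hyp}_{X_d}$ in $\theta^{\hyp}(X_d;x,+)$, which the paper leaves implicit.
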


We can therefore improve the half-exact sequence \eqref{eq:StabExtHalf} to an exact sequence
\begin{equation}\label{eq:StabExt}
1 \lra \mathrm{K}_d \lra \Stab_{\MCG_d}(\ell^{\hyp}_{X_d}) \lra \mathrm{Aut}(\pi_3(X_d), \lambda, \mu) \lra 1.
\end{equation}

\begin{remark}\label{rem:Brown}
The referee suggests the following perspective. The (stable) tangential structure $\bar{\theta}^\hyp : \mathbb{CP}^\infty \to B\U \to B\OO$, classifying the virtual bundle $\mathcal{O}(1)^{\oplus 5} - \mathcal{O}(d) - \underline{\bC}^4$, has total Stiefel--Whitney class 
$$w(\bar{\theta}^\hyp) = \frac{(1+x)^5}{1+dx} = 1 + (1+d) x + x^4 + \cdots \in H^*(\mathbb{CP}^\infty ; \bZ/2) = \bZ/2[[x]]$$
and so its 4th Wu class $v_4 = w_4 + w_3 w_1 + w_2^2 + w_1^4$ is given by $v_4(\bar{\theta}^\hyp) = (1+d)  x^2$. This vanishes precisely when $d$ is odd.

In this case, the work of Brown \cite{Brown} assigns to any closed 6-manifold $M$ endowed with a $\bar{\theta}^\hyp$-structure $\ell$ a quadratic form $q_{\ell}: H^3(M;\bZ/2) \to \bZ/2$. To see this, in the notation of Brown's paper one calculates that 
$$\{S^6, \mathrm{Th}((\bar{\theta}^\hyp)^*\gamma \to \mathbb{CP}^\infty) \wedge K_3\} = \begin{cases}\bZ/2\{\lambda\} & d \text{ odd}\\
0 & d \text{ even},
\end{cases}$$
so when $d$ is odd this has a unique map $h$ to $\bZ/4$ with $h(\lambda)=2$. By the procedure on \cite[p.\ 370]{Brown} this yields a quadratic form $q_{\ell} = \varphi_h : H^3(M;\bZ/2) \to \bZ/4$ taking values in $2\bZ/4 \cong \bZ/2$. For $(X_d, \ell_{X_d}^\hyp)$ it should be possible to equate $q_{\ell_{X_d}^\hyp}$ (composed with reduction modulo 2) with our $\mu_{\ell_{X_d}^\hyp}$ by a careful application of \cite[Corollary (1.13)]{Brown} but we offer the following indirect argument: both quadratic refinements are constructed using only the $\bar{\theta}^\hyp$-structure so are invariant under the action of $\Mon_d$, but by Beauville's theorem \ref{thm:Beauville} there is only one $\Mon_d$-invariant quadratic refinement, so they must be equal.
\end{remark}

\section{Lower bounds: automorphisms of the Fermat hypersurface}\label{sec:LowerBounds}

The goal of this section is to show that the image of
$$\Mon_d \lra \Stab_{\MCG_d}(\ell^{\hyp}_{X_d})$$
is not too small, by showing that it contains $\mathrm{Im}(\Phi : \Theta_7 \to \mathrm{K}_d)$, and
 surjects onto $\mathrm{Aut}(\pi_3(X_d), \lambda, \mu)$. The first of these is an immediate consequence of the work of Krylov \cite{Krylov}, and is very general: it just uses that $X_d$ is a 3-fold which admits a deformation to an $A_2$-singularity. The latter is more specific, and proceeds by analysing the effect of automorphisms of the Fermat hypersurface on $\pi_3(X_d)$, following Pham \cite{Pham} and Looijenga \cite{LooijengaFermat}.

\subsection{Dehn twists and the Milnor sphere}

The following simple consequence of the work of Krylov is rather surprising. It presumably admits generalisations to higher dimensions, and can perhaps be thought of as the analogue of the construction of $bP$-spheres as Brieskorn varieties, but for automorphisms.

\begin{theorem}\label{thm:Krylov}
For $d \geq 3$ the subgroup $\mathrm{Im}(\Phi : \Theta_7 \to \mathrm{K}_d) \leq \MCG_d$ is contained in the image of $\alpha: \Mon_d \to \MCG_d$.
\end{theorem}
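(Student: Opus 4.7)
The plan is to construct, for each element $\sigma \in \mathrm{Im}(\Phi : \Theta_7 \to \mathrm{K}_d)$, a loop in $\mathcal{U}_d$ based at $X_d$ whose monodromy realises $\sigma$, by exploiting a degeneration of $X_d$ to a hypersurface with an isolated $A_2$-singularity. First, for $d \geq 3$ the linear system $\mathbb{P}H^0(\bC\bP^4; \mathcal{O}(d))$ is large enough that one can find inside it a path from $X_d$ to a hypersurface $Y$ whose only singularity is a single $A_2$-point, meeting the discriminant only at $Y$; this is the basic geometric input and should follow from a direct construction of such a polynomial (a small deformation of a Morse-plus-cubic singularity).

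Near $Y$ the discriminant acquires the local structure of the discriminant of the miniversal deformation of the $A_2$ singularity, namely a cuspidal plane curve inside a slice transverse to the equisingular stratum, whose complement has fundamental group the three-strand braid group $B_3$. The local monodromy representation sends the two standard generators of $B_3$ to Dehn twists $\tau_{L_1}$ and $\tau_{L_2}$ along the two vanishing $3$-spheres $L_1, L_2$ in the Milnor fibre $F$, a $6$-manifold with $H_3(F) \cong \bZ^2$ spanned by the $L_i$, which intersect once transversely. The key step is then to invoke Krylov's theorem \cite{Krylov}, which identifies certain words in $\tau_{L_1}, \tau_{L_2}$ (for instance, loops in the kernel of the action of $B_3$ on $H_3(F)$, arising for example from the centre $(\sigma_1 \sigma_2)^3 \in B_3$) with specific non-trivial elements of $\Theta_7$: these words are topologically isotopic to the identity, but as smooth diffeomorphisms of $F$ they differ from the identity by an exotic $7$-sphere supported in an embedded $6$-disc of $F$. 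Krylov's calculation moreover shows that these elements realise all of $\mathrm{Im}(\Phi)$.

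Finally, since the Milnor fibre $F$ embeds diffeomorphically into a neighbourhood of the vanishing cycles inside $X_d$, the compactly supported local diffeomorphisms constructed above extend by the identity to global diffeomorphisms of $X_d$ supported in an embedded $6$-disc; being monodromies of loops in $\mathcal{U}_d$, they lie in $\mathrm{Im}(\alpha)$. The main obstacle is the middle step: one must rely on Krylov's precise identification of the elements of $\Theta_7$ produced by braid monodromy around an $A_2$ cusp, and verify that these exhaust $\mathrm{Im}(\Phi)$. This is the part which is genuinely imported from \cite{Krylov}, and is the reason that an $A_2$-degeneration (rather than a Morse degeneration, whose local monodromy is a single Dehn twist and so cannot detect any $\Theta_7$-data) is required.
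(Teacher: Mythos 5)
Your proposal follows essentially the same route as the paper: degenerate $X_d$ to an $A_2$-singularity, obtain the two Dehn twists along vanishing $3$-spheres meeting transversely once, and invoke Krylov's identification of a word in these twists with (the inverse of) the Milnor sphere, which generates $\Theta_7$ and hence realises all of $\mathrm{Im}(\Phi)$. Two small caveats: the relevant word is $(\tau_{L_1}\tau_{L_2}\tau_{L_1})^4$, the \emph{square} of the centre generator $(\sigma_1\sigma_2)^3$ of $B_3$ (the centre generator itself acts as $-\mathrm{Id}$ on $H_3(F)\cong \bZ^2$, so does not lie in the kernel of the homology action), and transferring Krylov's computation --- which identifies this word as an element of $\pi_0\Diff^+(S^3\times S^3)$ --- to a statement about compactly supported diffeomorphisms of the Milnor fibre $(S^3\times S^3)\setminus D^6$ requires Kreck's result that $\pi_0(\Diff_c((S^3\times S^3)\setminus D^6))\to \pi_0(\Diff^+(S^3\times S^3))$ is an isomorphism, a step your sketch elides.
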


\begin{proof}
Under the given condition $X_d$ admits a deformation to an $A_2$-singularity, meaning that there is an orientation-preserving embedding $T^* S^3 \natural T^* S^3 \subset X_d$ of the plumbing such that the Dehn twists around each $S^3$ lies in the image of $\alpha$. Now $T^* S^3 \natural T^* S^3 \cong (S^3 \times S^3) \setminus D^6 \subset S^3 \times S^3$ and, considering $S^3 \subset \mathbb{H}$ as the unit quaternions, inside the latter manifold the two Dehn twists are isotopic to
$$Y : (u,v) \mapsto (u, uv) \quad\text{ and }\quad U : (u,v) \mapsto (v^{-1} u, v).$$
By \cite[Theorem 3]{Krylov} these satisfy 
$$(YUY)^4 = \Sigma_{\text{Milnor}}^{-1} \in \Theta_7 \leq \pi_0(\Diff^+(S^3 \times S^3))$$
where $ \Sigma_{\text{Milnor}}^{-1}$ denotes the inverse (in the group $\Theta_7$) of the class of the Milnor sphere. On the other hand it follows from \cite[p.\ 657 and Lemma 3 b)]{kreckisotopy} that the map
$$\pi_0(\Diff_c(T^* S^3 \natural T^* S^3)) \cong \pi_0(\Diff_{c}((S^3 \times S^3) \setminus D^6)) \lra \pi_0(\Diff^+(S^3 \times S^3)),$$
which extends by the identity over $D^6$, is an isomorphism. Thus the given expression represents $\Sigma_{\text{Milnor}}^{-1} \in \Theta_7$ in the plumbing, and so when put inside $X_d$ also represents the generator $[\Sigma_{\text{Milnor}}^{-1}] \in \mathrm{Im}(\Theta_7 \to \mathrm{K}_d)$.
\end{proof}

\subsection{Automorphisms of $\pi_3(X_d)$ realised by monodromy}\label{sec:AutMonodromy}

By the discussion in Section \ref{sec:SurgeryKernel} the quadratic refinement $\mu$ on $\pi_3(X_d)$ descends to a quadratic refinement on $H_3(X_d;\bZ)$ if and only if $d$ is odd. The map
$$\rho : \mathrm{Aut}(\pi_3(X_d), \lambda, \mu) \lra \begin{cases}
\mathrm{Aut}(H_3(X_d;\bZ), \lambda, \mu) & d \text{ odd}\\
\mathrm{Aut}(H_3(X_d;\bZ), \lambda) & d \text{ even}
\end{cases}$$
is surjective, and its kernel is identified with the group of automorphisms of the extension
\begin{equation*}
0 \lra \bZ/d\{\eta\} \lra \pi_3(X_d) \lra H_3(X_d;\bZ) \lra 0
\end{equation*}
which are the identity on the outer terms, and preserve $\mu$. There is an isomorphism
$$\mathrm{Ker}(\rho) \overset{\sim}\lra \mathrm{Hom}\left(H_3(X_d;\bZ), \begin{cases}
\bZ/d & d \text{ odd}\\
\mathrm{ker}(\mu : \bZ/d \to \bZ/2) & d \text{ even}
\end{cases}\right)$$
given as follows: for $\phi \in \mathrm{Ker}(\rho)$ and $z \in H_3(X_d;\bZ)$, choose a lift $\bar{z} \in \pi_3(X_d)$ and form $\phi(\bar{z})-\bar{z} \in \mathrm{Ker}(\pi_3(X_d) \to H_3(X_d;\bZ)) = \bZ/d\{\eta\}$, which does not depend on the choice of lift $\bar{z}$ of $z$. Then $\mu(\phi(\bar{z})-\bar{z}) = \mu(\phi(\bar{z})) + \mu(\bar{z}) - \lambda(\phi(\bar{z}), \bar{z}) = 0$ as the first two terms cancel (because $\phi$ preserves $\mu$) and the latter is $\lambda(z,z)=0$ (because $\lambda$ factors over $H_3(X_d;\bZ)$ and is antisymmetric). This map is easily checked to be an isomorphism. To avoid distinguishing cases, we can write the target above as $H^3(X_d; 2 \cdot \bZ/d)$.

\begin{lemma}\label{lem:ConstrainedSubgroups}
If $G \leq \mathrm{Aut}(\pi_3(X_d), \lambda, \mu)$ is a subgroup which satisfies $\rho(G) = \rho(\mathrm{Aut}(\pi_3(X_d), \lambda, \mu))$, then $\mathrm{Ker}(\rho\vert_G) = k \cdot H^3(X_d; 2 \cdot \bZ/d)$ for some $k \in \bZ$, under the identification given above.
\end{lemma}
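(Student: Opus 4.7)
The plan is to use that $\mathrm{Ker}(\rho\vert_G) = G \cap \mathrm{Ker}(\rho)$ is normal in $G$. Since $\mathrm{Ker}(\rho)$ is abelian, the conjugation action of $G$ on $\mathrm{Ker}(\rho)$ factors through $\rho(G)$, which by hypothesis equals $\rho(\mathrm{Aut}(\pi_3(X_d), \lambda, \mu))$. Hence $\mathrm{Ker}(\rho\vert_G) \leq \mathrm{Ker}(\rho) \cong H^3(X_d; 2 \cdot \bZ/d)$ is $\rho(\mathrm{Aut}(\pi_3(X_d), \lambda, \mu))$-invariant, and the problem reduces to classifying such invariant subgroups.

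Set $n := |2 \cdot \bZ/d|$ (so $n = d$ for $d$ odd and $n = d/2$ for $d$ even). The unimodularity of $\lambda$ on $H_3(X_d;\bZ) \cong \bZ^{2g}$ yields an equivariant identification $H^3(X_d; 2 \cdot \bZ/d) \cong (\bZ/n)^{2g}$ on which $\rho(\mathrm{Aut}(\pi_3(X_d), \lambda, \mu))$ acts through its mod-$n$ image in $\mathrm{Sp}_{2g}(\bZ/n)$. I would next show that this image is all of $\mathrm{Sp}_{2g}(\bZ/n)$. For $d$ even this is immediate, since $\rho(\mathrm{Aut}(\pi_3(X_d), \lambda, \mu)) = \mathrm{Aut}(H_3(X_d;\bZ), \lambda) \cong \mathrm{Sp}_{2g}(\bZ)$ surjects onto $\mathrm{Sp}_{2g}(\bZ/n)$. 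For $d$ odd, $\rho(\mathrm{Aut}(\pi_3(X_d), \lambda, \mu))$ contains the level-$2$ principal congruence subgroup $\Gamma(2) \leq \mathrm{Sp}_{2g}(\bZ)$ (a direct computation with the quadratic relation shows $\sigma \equiv \mathrm{Id} \bmod 2$ automatically preserves the $\bZ/2$-valued $\mu$), and since $\gcd(2,n)=1$ the Chinese remainder theorem together with the surjectivity $\mathrm{Sp}_{2g}(\bZ) \twoheadrightarrow \mathrm{Sp}_{2g}(\bZ/2n)$ implies that $\Gamma(2)$ already surjects onto $\mathrm{Sp}_{2g}(\bZ/n)$.

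The remaining step is the purely algebraic classification: every $\mathrm{Sp}_{2g}(\bZ/n)$-invariant subgroup $M$ of $(\bZ/n)^{2g}$ has the form $k \cdot (\bZ/n)^{2g}$ for some $k \mid n$. To see this, let $o$ be the maximum additive order of elements of $M$ and set $k := n/o$; then $o \cdot M = 0$ gives $M \subseteq k(\bZ/n)^{2g}$, and any $v \in M$ of order $o$ can be written as $v = kw$ for a primitive $w \in (\bZ/n)^{2g}$. The classical transitivity of $\mathrm{Sp}_{2g}(\bZ)$ on primitive vectors of $\bZ^{2g}$ descends (via lifting primitive vectors, using the mod-$n$ surjection) to transitivity of $\mathrm{Sp}_{2g}(\bZ/n)$ on primitive vectors of $(\bZ/n)^{2g}$, so $M$ contains $k e_i$ for each standard basis vector and hence equals $k(\bZ/n)^{2g}$. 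I expect the main technical step to be verifying the surjection $\Gamma(2) \twoheadrightarrow \mathrm{Sp}_{2g}(\bZ/n)$ in the $d$ odd case; the rest is routine.
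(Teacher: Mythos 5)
Your proof is correct, but it takes a genuinely different route from the paper's. The paper pulls $\mathrm{Ker}(\rho\vert_G)$ back along $\bZ \to 2\cdot\bZ/d$ to an invariant subgroup $I \leq H^3(X_d;\bZ) \cong \bZ^{2g}$ and then classifies the subgroups of $\bZ^{2g}$ invariant under the \emph{quadratic} symplectic group $\mathrm{Aut}(H_3(X_d;\bZ),\lambda,q)$; this requires the transitivity of that group on unimodular vectors of fixed $q$-length (quoted from Friedrich, using $g \geq 5$) together with an explicit case analysis on $q(a_0') \in \{0,1\}$ to exhibit enough translates. You instead stay inside the finite module $(\bZ/n)^{2g}$ and observe that the quadratic constraint is invisible there: it is only present when $d$ is odd, in which case $n=d$ is odd and the mod-$2$ condition defining $\mathrm{Sp}_{2g}^{q}(\bZ)$ is killed by reduction mod $n$ -- your computation that $\Gamma(2)$ preserves any quadratic refinement, plus CRT and surjectivity of $\mathrm{Sp}_{2g}(\bZ) \to \mathrm{Sp}_{2g}(\bZ/2n)$, correctly gives that the acting group is all of $\mathrm{Sp}_{2g}(\bZ/n)$. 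The remaining classification of $\mathrm{Sp}_{2g}(\bZ/n)$-invariant subgroups via transitivity on primitive vectors is sound; the two routine points you should spell out are (a) that a vector of exact order $o=n/k$ in $k(\bZ/n)^{2g}$ really is $k$ times a vector of order $n$, and that primitive vectors mod $n$ lift to primitive integral vectors -- both are CRT arguments needing only $2g \geq 2$ -- and (b) that the action on $\mathrm{Hom}(H_3(X_d;\bZ), 2\cdot\bZ/d)$ is the contragredient one, which $\lambda$ intertwines with the standard representation so that invariant subgroups match up. The trade-off: your argument avoids the quadratic-form transitivity machinery entirely and works for any $g \geq 1$, at the cost of the congruence-subgroup step; the paper's argument is self-contained at the level of explicit automorphisms but leans on the hypothesis $g \geq 5$ and on the stabiliser-of-$q$ transitivity result.
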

\begin{proof}
Under the surjection $1 \mapsto 2 : \bZ \to 2 \cdot \bZ/d$, the preimage of $\mathrm{Ker}(\rho\vert_G)$ is a subgroup of $H^3(X_d;\bZ)$, and is preserved by the action of $\mathrm{Aut}(\pi_3(X_d), \lambda, \mu)$ on this group via $\rho$. It therefore suffices to show that all subgroups $I \leq H^3(X_d;\bZ)$ which are preserved by $\mathrm{Aut}(H_3(X_d;\bZ), \lambda, q)$ and some quadratic refinement $q$ are of the form $k \cdot H^3(X_d;\bZ)$.

The group $\mathrm{Aut}(H_3(X_d;\bZ), \lambda, q)$ acts transitively on the set of unimodular elements of the same $q$-length (this is not hard to show by hand, but it follows from \cite[Corollary 3.13]{Friedrich} using that $\mathrm{usr}(\bZ) = 2$ and $d \geq 3$ so that $(H_3(X_d;\bZ), \lambda)$ contains $\geq 4$ hyperbolic forms and hence $(H_3(X_d;\bZ), \lambda, q)$ contains $\geq 3$ hyperbolic forms). Choose a symplectic basis $\{e_1, f_1, \ldots, e_g, f_g\}$ for $(H_3(X_d;\bZ), \lambda)$, such that $q(e_i)=q(f_i)=0$ for $i>1$, and $q(e_1)=q(f_1) = \mathrm{Arf}(q)$.

Let $k := \min\{|\lambda(a,b)| \neq 0 \, : \, a \in I, b \in H^3(X_d;\bZ)\}$, then all elements of $I$ are divisible by $k$ in $H^3(X_d;\bZ)$, so $I \leq k \cdot H^3(X_d;\bZ)$. Choose an $a_0 \in I$ such that $\lambda(a_0, b_0)=k$. Then $a_0 = k \cdot a_0'$ for a unimodular $a'_0 \in H^3(X_d;\bZ)$. 

If $q(a_0')=0$ then there are automorphisms sending $a_0'$ to: (i) $e_i$ or $f_i$ for any $i>1$, (ii) $e_1 + \mathrm{Arf}(q)(e_2+f_2)$ or $f_1 + \mathrm{Arf}(q)(e_2+f_2)$. Thus $I$ contains $k \cdot e_i$ and $k \cdot f_i$ for all $i$, so $k \cdot H^3(X_d;\bZ) = I$.

If $q(a_0')=1$ then there are automorphisms sending $a_0'$ to: (i) $e_1 + f_1$, (ii) $e_1+f_1 + e_i$ or $e_1+f_1 + f_i$ for any $i\geq 1$, (iii) $e_1 + (1-\mathrm{Arf}(q))(e_2+f_2)$ or $f_1 + (1-\mathrm{Arf}(q))(e_2+f_2)$. Thus $I$ contains $k \cdot e_i$ and $k \cdot f_i$ for all $i$, so $k \cdot H^3(X_d;\bZ) = I$.
\end{proof}

\begin{proposition}\label{prop:EpiCriterion}
If $G \leq \mathrm{Aut}(\pi_3(X_d), \lambda, \mu)$ is a subgroup which satisfies $\rho(G) = \rho(\mathrm{Aut}(\pi_3(X_d), \lambda, \mu))$, then 
$$H_0(G ; \pi_3(X_d)) \cong \bZ/(k,d)\{\eta\},$$
for $k$ as in Lemma \ref{lem:ConstrainedSubgroups}. Thus $G = \mathrm{Aut}(\pi_3(X_d), \lambda, \mu)$ if and only if the composition
$$\bZ/d\{\eta\} \lra \pi_3(X_d) \lra H_0(G ; \pi_3(X_d))$$
is zero.
\end{proposition}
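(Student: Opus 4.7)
The plan is to analyze the short exact sequence of $G$-modules
\[0 \to \bZ/d\{\eta\} \to \pi_3(X_d) \to H_3(X_d;\bZ) \to 0\]
via the induced right-exact sequence on coinvariants. First I establish $H_0(G; H_3(X_d;\bZ)) = 0$, using that $\rho(G) = \rho(\mathrm{Aut}(\pi_3,\lambda,\mu))$ is either $\mathrm{Aut}(H_3,\lambda)$ or $\mathrm{Aut}(H_3,\lambda,q)$ depending on parity, and each acts on the rank-$2g$ lattice with vanishing coinvariants for $g\geq 5$ (by transitivity on unimodular vectors, using the abundance of hyperbolic summands). The remaining sequence
\[H_1(G; H_3) \xrightarrow{\delta} \bZ/d\{\eta\} \to H_0(G;\pi_3) \to 0\]
presents $H_0(G;\pi_3)$ as the cyclic quotient $\bZ/d/\mathrm{Im}(\delta)$ generated by $[\eta]$. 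Note that the proof of Lemma \ref{lem:ConstrainedSubgroups} actually shows $k\mid d/\gcd(2,d)$, so in particular $k\mid d$ and $(k,d)=k$; thus the assertion reduces to $\mathrm{Im}(\delta) = k\bZ/d$.

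Fix a $\bZ$-linear section $s: H_3 \to \pi_3$ and set $c(g)(y) := g\cdot s(y) - s(\rho(g)y) \in \bZ/d\{\eta\}$, so that $\delta(\sum[g_i|y_i]) = -\sum c(g_i)(y_i)$. The crucial identity, obtained by applying $\mu$ to the $G$-invariance relation $\mu(g\cdot s(y)) = \mu(s(y))$ and using that $\eta$ lies in the radical of $\lambda$, is
\[c(g)(y)\cdot\mu(\eta) \equiv q(y) - q(\rho(g)y) \pmod 2, \quad q(y) := \mu(s(y)).\]
For $d$ odd $\mu(\eta)=0$, and the identity just reaffirms that $\rho(G) \subseteq \mathrm{Aut}(H_3,\lambda,q)$. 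For $d$ even Lemma \ref{lem:QuadDescends} gives $\mu(\eta)=1$, so $c(g)(y) \equiv q(y) - q(\rho(g)y) \pmod 2$, encoding the relation of $c$ to the failure of $\rho(G)$ to preserve $q$.

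For the lower bound $\mathrm{Im}(\delta)\supseteq k\bZ/d$, the single-term cycles $[h|y]$ with $h\in K:=\mathrm{Ker}(\rho|_G)=k\cdot H^3(X_d;2\bZ/d)$ contribute $\phi_h(y) \in \gcd(2k,d)\bZ/d$, which already equals $k\bZ/d$ when $d$ is odd. For $d$ even with $\gcd(2k,d) = 2k > k$, an extra odd contribution is produced by a three-term cycle: choose $v_1, v_2 \in H_3$ with $q(v_1)=q(v_2)=0$ and $\lambda(v_1,v_2)=0$, set $v_3 := v_1+v_2$ (so $q(v_3)=0$), and pick $y_1,y_2,y_3$ with $\sum\lambda(v_i,y_i) v_i = 0$ and $\sum \lambda(v_i,y_i)$ odd (possible because $g\geq 5$). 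Each transvection $T_{v_i}$ on $H_3$ lifts to $G$ by the hypothesis $\rho(G)=\rho(\mathrm{Aut})$; concretely, take $T_{\bar v_i}$ with $\bar v_i = s(v_i) + m_i\eta$ and $m_i$ odd so that $\mu(\bar v_i)=1$ and $T_{\bar v_i}\in\mathrm{Aut}(\pi_3,\lambda,\mu)$, then adjust by an element of $K_G$ to land in $G$. The sum $\sum_i (g_i\cdot s(y_i) - s(y_i))$ telescopes to an element of $\bZ/d\{\eta\}$ which, by the displayed identity, is congruent to $\sum\lambda(v_i,y_i) \equiv 1 \pmod 2$, supplying the missing odd element and bringing $\mathrm{Im}(\delta)$ down from $\gcd(2k,d)\bZ/d$ to $k\bZ/d$.

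For the upper bound $\mathrm{Im}(\delta)\subseteq k\bZ/d$, I build a $G$-invariant homomorphism $\chi:\pi_3 \to \bZ/k$ with $\chi(\eta)=1$; equivalently, the extension class $[c] \in H^1(G;\mathrm{Hom}(H_3,\bZ/d))$ dies after reduction to $H^1(G;\mathrm{Hom}(H_3,\bZ/k))$. The Lyndon--Hochschild--Serre 5-term sequence for $1\to K \to G\to \rho(G)\to 1$ (with $K$ acting trivially on the coefficients) reduces this to: (i) $c|_K \equiv 0 \pmod k$, which is immediate since $K\subseteq k\cdot\mathrm{Hom}(H_3,\bZ/d)$; and (ii) the induced class $\bar c \in H^1(\rho(G);\mathrm{Hom}(H_3,\bZ/k))$ is a coboundary. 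Step (ii) is the main obstacle: one must exhibit an explicit $\chi_0:H_3\to\bZ/k$ trivialising $\bar c$, combining the mod-$2$ information encoded by the $q$-identity with mod-$k$ information from the explicit form of $K$, and using that $\rho(G)$ equals the full image $\rho(\mathrm{Aut}(\pi_3,\lambda,\mu))$ to rule out extraneous cohomology classes.
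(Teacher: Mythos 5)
Your reduction to computing $\mathrm{Im}(\delta)$, the vanishing of $H_0(G;H_3(X_d;\bZ))$, and the lower bound are all sound. In particular the three-term transvection cycle, combined with the identity $c(g)(y)\cdot\mu(\eta)\equiv q(y)-q(\rho(g)y) \pmod 2$, is a correct and pleasantly explicit way to produce the extra odd element of $\mathrm{Im}(\delta)$ when $d$ is even; the paper instead deduces its existence more indirectly from Beauville's theorem (a failure of surjectivity of the relevant connecting map would yield a $\Mon_d$-invariant quadratic refinement on $H_3(X_d;\bZ)$ for $d$ even, which cannot exist). Your observation that $k\mid d/\gcd(2,d)$ is also correct and genuinely useful.

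The gap is the upper bound, and you have flagged it yourself: step (ii), the triviality of $\bar c\in H^1(\rho(G);\mathrm{Hom}(H_3,\bZ/k))$, is where essentially all of the content lies, and it is left undone. The paper sidesteps this by computing coinvariants in \emph{two} stages: first over $\mathrm{Ker}(\rho\vert_G)$, where the connecting map is literally evaluation of homomorphisms and has image $2k\cdot\bZ/d$, and then over $\mathrm{Im}(\rho)$. The point of that decomposition is that $\mathrm{Im}(\rho)$ contains the central element $-\mathrm{Id}$ acting by $-1$ on $H_3(X_d;\bZ)$, so the centre-kills trick forces $H_1(\mathrm{Im}(\rho);H_3(X_d;\bZ))$ to have exponent $2$ and confines the remaining connecting map to the $2$-torsion of $\bZ/\gcd(2k,d)$; your one-stage formulation has no access to this, since $G$ itself need not contain a suitable central element. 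In your setup the same trick would give $H^1(\rho(G);\mathrm{Hom}(H_3,\bZ/k))=0$ whenever $k$ is odd (the group is annihilated by both $2$ and $k$), which would finish step (ii) --- but you have not shown $k$ is odd, and for $d$ even this does not follow from $k\mid d/2$: it is a nontrivial by-product of the argument (the footnote to the paper's proof calls it ``a bit surprising''). Indeed, if $k$ were even with $d$ even, your own lower bound (an odd element of $\mathrm{Im}(\delta)$) would contradict the desired inclusion $\mathrm{Im}(\delta)\subseteq k\cdot\bZ/d$, so the oddness of $k$ must be \emph{derived}, not assumed, and cannot be obtained from the upper bound you are trying to prove without circularity. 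The two-stage coinvariant computation delivers the upper bound and the oddness of $k$ simultaneously, and I would recommend restructuring your argument along those lines.
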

\begin{proof}
Let us abbreviate $A := \mathrm{Aut}(\pi_3(X_d), \lambda, \mu)$, and $H_3(X_d) = H_3(X_d;\bZ)$. Then
\begin{align*}
H_0(A ; \pi_3(X_d)) &= H_0(\mathrm{Im}(\rho) ; H_0(\mathrm{Ker}(\rho) ; \pi_3(X_d)))\\
H_0(G ; \pi_3(X_d)) &= H_0(\mathrm{Im}(\rho) ; H_0(\mathrm{Ker}(\rho\vert_G) ; \pi_3(X_d))).
\end{align*}
and so first consider the long exact sequences for the action of $\mathrm{Ker}(\rho)$ and $\mathrm{Ker}(\rho\vert_G)$ on the extension describing $\pi_3(X_d)$:
\begin{equation*}
\begin{tikzcd}[column sep=0.3cm]
H_1(\mathrm{Ker}(\rho\vert_G) ; H_3(X_d)) \rar{\partial} \dar & \bZ/d \arrow[d, equals] \rar & H_0(\mathrm{Ker}(\rho\vert_G) ; \pi_3(X_d)) \rar \dar & H_0(\mathrm{Ker}(\rho\vert_G) ; H_3(X_d)) \arrow[d, equals]\\
 H_1(\mathrm{Ker}(\rho) ; H_3(X_d)) \rar{\partial} & \bZ/d \rar & H_0(\mathrm{Ker}(\rho) ; \pi_3(X_d)) \rar & H_0(\mathrm{Ker}(\rho) ; H_3(X_d)).
\end{tikzcd}
\end{equation*}
Under the identification $H_1(\mathrm{Ker}(\rho) ; H_3(X_d)) \cong H^3(X_d ; 2\cdot \bZ/d) \otimes H_3(X_d)$ the lower map $\partial$ is given by evaluation, so it has image $2 \cdot \bZ/d$. Similarly the upper map $\partial$ has image $2k \cdot \bZ/d$. This gives a map of short exact sequences, and the induced map on long exact sequence for $\mathrm{Im}(\rho)$-homology takes the form:
\begin{equation*}
\begin{tikzcd}[column sep=0.4cm]
H_1(\mathrm{Im}(\rho) ; H_3(X_d))\rar{\partial} \arrow[d, equals] & \bZ/(2k,d) \rar \arrow[d, two heads] & H_0(G ; \pi_3(X_d)) \rar \dar & H_0(\mathrm{Im}(\rho) ; H_3(X_d)) \rar \arrow[d, equals] & 0\\
H_1(\mathrm{Im}(\rho) ; H_3(X_d))\rar{\partial} & \bZ/(2,d) \rar & H_0(A ; \pi_3(X_d)) \rar & H_0(\mathrm{Im}(\rho) ; H_3(X_d)) \rar & 0
\end{tikzcd}
\end{equation*}
By \cite[Lemma A.2]{KrannichMCG} the rightmost terms vanish, as $\mathrm{Im}(\rho)$ is a (quadratic) symplectic group. 

If $d$ is odd then $\bZ/(2,d)$ vanishes, and if $d$ is even then it is $\bZ/2$. When $d$ is even we claim that the lower $\partial$ is onto. If not, we obtain an $A$-invariant homomorphism $Q : \pi_3(X_d) \to \bZ/2$ which agrees with $\mu$ on $\bZ/d$. Then $\mu - Q$ is again a quadratic refinement of $\lambda$ on $\pi_3(X_d)$, but it vanishes on $\bZ/d \leq \pi_3(X_d)$ and so descends to an $A$-invariant quadratic refinement of $\lambda$ on $H_3(X_d;\bZ)$. By Beauville's Theorem \ref{thm:Beauville} there is no such quadratic refinement. For $d$ both even and odd it then follows that $H_0(A ; \pi_3(X_d))=0$.

The group $H_1(\mathrm{Im}(\rho) ; H_3(X_d))$ has exponent 2 by the centre-kills trick\footnote{The group $\mathrm{Im}(\rho)$ contains the matrix $-\mathrm{Id}$ in its centre, which acts on the module $H_3(X_d)$ by $-1$; conjugation by this matrix induces a map on homology which is both (i) the identity, as it is inner, and (ii) multiplication by $-1$.}, so the image of the top $\partial$ lands in the subgroup of $\bZ/(2k,d)$ of elements of order 2, i.e.\ the subgroup generated by $[k]$. If $d$ is odd this means it vanishes. If $d$ is even it must hit $[k]$, by commutativity of the left-hand square.\footnote{This shows that $\bZ/(2k,d) \to \bZ/(2,d)$ is split, i.e. that $\bZ/(k, d/2)$ has odd order. This is a bit surprising, but is just saying that the subgroups $\mathrm{Ker}(\rho\vert_G) \leq H^3(X_d;2\cdot \bZ/d)$ are even more constrained than Lemma \ref{lem:ConstrainedSubgroups} indicates.} It follows that $H_0(G ; \pi_3(X_d)) \cong \bZ/(k,d)$.
\end{proof}

\begin{theorem}\label{thm:MonodromyOnPi3}
The composition
$$\Mon_d \lra \MCG_d \lra \mathrm{Aut}(\pi_3(X_d), \lambda, \mu)$$
is surjective.
\end{theorem}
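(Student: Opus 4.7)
My plan is to verify the hypothesis of Proposition \ref{prop:EpiCriterion} applied to the subgroup $G := \mathrm{Im}(\Mon_d \to \mathrm{Aut}(\pi_3(X_d), \lambda, \mu))$. First, Beauville's Theorem \ref{thm:Beauville}, combined with the surjectivity of $\rho$ discussed at the start of this subsection, immediately yields $\rho(G) = \rho(\mathrm{Aut}(\pi_3(X_d), \lambda, \mu))$; for $d$ odd one also uses that Beauville's quadratic refinement $q_{X_d}$ on $H_3(X_d;\bZ)$ coincides with the refinement induced by $\mu$, as noted in Remark \ref{rem:QuadRefExists}. Proposition \ref{prop:EpiCriterion} then reduces the theorem to showing that the composition
$$\bZ/d\{\eta\} \lra \pi_3(X_d) \lra H_0(G ; \pi_3(X_d))$$
is zero.

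Unwinding the proof of Proposition \ref{prop:EpiCriterion}, this is equivalent to showing that $\gcd(k, d) = 1$, where $k$ is the integer of Lemma \ref{lem:ConstrainedSubgroups} characterising the subgroup $\mathrm{Ker}(\rho\vert_G) = k \cdot H^3(X_d; 2 \cdot \bZ/d)$. Concretely, one must exhibit enough elements of $\Mon_d$ acting trivially on $H_3(X_d;\bZ)$ so that their associated homomorphisms in $\mathrm{Ker}(\rho) \cong \mathrm{Hom}(H_3(X_d;\bZ), 2\cdot \bZ/d)$ are non-zero modulo every prime dividing $d$.

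To construct such elements, I would use the automorphisms of $X_d$ coming from $\mathrm{PGL}_5(\bC)$: since $\mathrm{PGL}_5(\bC)$ is path-connected and each of its elements preserves the smoothness of degree-$d$ hypersurfaces, any $g \in \mathrm{Aut}(X_d) \cap \mathrm{PGL}_5(\bC)$ extends along a path in $\mathrm{PGL}_5(\bC)$ to a loop in $\mathcal{U}_d$, and thus yields an element of $\Mon_d$. For the Fermat hypersurface this subgroup contains the diagonal torus of $d$th roots of unity $(\mu_d)^5/\mu_d$ and the coordinate permutation group $S_5$. Pham \cite{Pham} decomposed $H_3(X_d;\bC)$ into one-dimensional joint eigenspaces for the diagonal action (the Pham cycles), and Looijenga \cite{LooijengaFermat} refined this analysis to the integral lattice and to the refinement giving $\pi_3(X_d)$. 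Using their results, I would combine Fermat automorphisms (possibly with additional Dehn twists from Theorem \ref{thm:Krylov}) to produce a product $\varphi$ in the kernel of $\rho$ whose associated homomorphism $H_3(X_d;\bZ) \to 2\cdot\bZ/d$ hits a generator, thereby forcing $k = 1$ and $\mathrm{Ker}(\rho|_G) = \mathrm{Ker}(\rho)$.

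The main obstacle is the passage from $H_3(X_d;\bZ)$ to $\pi_3(X_d)$: the $\bZ/d\{\eta\}$ extension becomes trivial after tensoring with $\bC$, so Pham's eigenspace decomposition by itself does not see the $\eta$-twist of a Fermat automorphism, and one must work integrally while carefully tracking the surgery-kernel interpretation of $\mu$ from Section \ref{sec:SurgeryKernel}. This is precisely where Looijenga's integral refinement becomes essential and constitutes the technical heart of the argument.
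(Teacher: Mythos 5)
Your setup is the same as the paper's: reduce via Proposition \ref{prop:EpiCriterion} to showing that $\bZ/d\{\eta\}$ dies in $H_0(G;\pi_3(X_d))$, note that Beauville's theorem (together with Remark \ref{rem:QuadRefExists} in the odd case) gives $\rho(G)=\rho(\mathrm{Aut}(\pi_3(X_d),\lambda,\mu))$, and then try to use the Fermat automorphisms via Pham and Looijenga. However, the step you defer --- ``I would combine Fermat automorphisms \dots to produce a product $\varphi$ in the kernel of $\rho$ whose associated homomorphism hits a generator'' --- is the entire content of the proof, and you have correctly diagnosed but not resolved the obstruction. Two concrete things are missing. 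First, to transport the Pham--Looijenga description of $H_3$ up to $\pi_3(X_d)$ one needs the fact that $\pi_3(A_d^3)\to\pi_3(X_d^3)$ is surjective, where $A_d^3$ is the affine Fermat hypersurface; this is a genuine geometric claim (the paper proves it by exhibiting a homologically trivial sphere in $A_d^3$ bounding a $4$-chain whose intersection number with $\bC\bP^2$ is $1$, so that it equals $\eta$ up to units mod $d$), and it is exactly the bridge over the ``main obstacle'' you name. Second, once one has the resulting map of extensions of $\bZ[\mu_d^4]$-modules from $0\to I\to \bZ[\mu_d^4]/(\sum_k t_i^k)\to H_3(X_d;\bZ)\to 0$ onto $0\to\bZ/d\{\eta\}\to\pi_3(X_d)\to H_3(X_d;\bZ)\to 0$, the vanishing of $\eta$ in coinvariants follows from a diagram chase on $H_0(\mu_d^4;-)$: the map $\bZ/d\to\bZ/d$ between the coinvariants of the middle terms of the top row's associated sequence is forced to be an isomorphism by a counting argument, whence the composite $\bZ/d\{\eta\}\to H_0(\mu_d^4;\pi_3(X_d))$ is zero. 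Your framing in terms of producing a single $\varphi\in\mathrm{Ker}(\rho|_G)$ whose homomorphism $H_3(X_d;\bZ)\to 2\cdot\bZ/d$ generates would also suffice in principle, but you give no construction of it, and it is not clear that such a $\varphi$ can be read off from the eigenspace decomposition without essentially redoing the coinvariants computation. As written, the proposal is an accurate plan with the decisive computation left blank, so it does not constitute a proof.
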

\begin{proof}
By Beauville's Theorem \ref{thm:Beauville}, the image $G$ of this composition satisfies $\rho(G) = \rho(\mathrm{Aut}(\pi_3(X_d), \lambda, \mu))$, and so by Proposition \ref{prop:EpiCriterion} the composition is surjective if and only if the composition $\bZ/d\{\eta\} \to \pi_3(X_d) \to H_0(\Mon_d ; \pi_3(X_d))$ is zero.

Let us write $X_d = X_d^3 \subset \bC\bP^4$: the decomposition $\bC\bP^3 \subset \bC\bP^4 \supset \bA^4$ induces a corresponding decomposition $X_d^2 \subset X_d^3 \supset A_d^3$, where $A_d^3$ is the corresponding affine Fermat hypersurface, and $X_d^2 \subset \bC\bP^3$ is the 2-dimensional Fermat hypersurface. The normal bundle of $X_d^2 \subset X_d^3$ is $\mathcal{O}(1)$. The long exact sequence on homology for $(X_d^3, A_d^3)$, along with excision and the Thom isomorphism, gives the bottom part of the following commutative diagram:
\begin{equation*}
\begin{tikzcd}[column sep=0.5cm]
 & & & \bZ/d\{\eta\} \dar\\
 & & \pi_3(A_d^3) \dar{\simeq} \rar & \pi_3(X_d^3) \arrow[d, two heads]\\
H_4(X_d^3;\bZ) \rar \arrow[d, equals] & H_4(X_d^3, A_d^3;\bZ) \rar{\partial} \dar{\simeq} & H_3(A_d^3;\bZ) \rar & H_3(X_d^3;\bZ) \rar & 0\\
\bZ \arrow[rd, "d \cdot -"] & H_2(X_d^2 ; \bZ) \dar{h}\\
  & \bZ
\end{tikzcd}
\end{equation*}
The top part comes from the fact that $A_d^3$ is homotopy equivalent to a wedge of $3$-spheres, and the defining extension for $\pi_3(X_d)$.

\vspace{1ex}

\noindent\textbf{Claim:} The map $\pi_3(A_d^3) \to \pi_3(X_d^3)$ is surjective.

\begin{proof}[Proof of Claim]
We just have to show that it hits $\eta$. If $f : S^3 \to X_d^3$ is a homotopy class which is homologically trivial, then it is some multiple of $\eta$. This multiple may be found as follows: choose a 4-chain bounding $f$ in $X_d^3$, and choose a 4-disc bounding $f$ in $\bC\bP^4$, then intersect the union of these with $\bC\bP^2 \subset \bC\bP^4$ (equivalently, evaluate $h^2$ on this cycle). The result is well-defined only modulo $d$ because $h^2$ evaluated on a 4-cycle in $X_d^3$ lies in $d\bZ$.

By the Lefschetz hyperplane theorem the inclusion $X_d^2 \to \bC\bP^3$ is 2-connected, and in particular there is an $[a] \in H_2(X_d^2;\bZ)$ such that $\langle h, [a] \rangle=1$. Under the Thom isomorphism in the diagram above this corresponds to an $[\bar{a}] \in H_4(X_d^3, A_d^3 ; \bZ)$, and $\partial([\bar{a}])$ can be represented by a sphere in $A_d^3$ and hence a $[f] \in \pi_3(X_d^3)$ which is homologically trivial. By construction $\bar{a}$ is a 4-chain in $X_d^3$ which bounds $f$, and we can choose the disc filling $f$ to be in $\bA^4$. Thus the intersection of this cycle with $\bC\bP^2$ is the same as the intersection of $\bar{a} \cap \bC\bP^3 = a$ with $\bC\bP^1 \subset \bC\bP^3$, which is $\langle h, [a] \rangle=1$ by design.
\end{proof}

We now use the work of Pham \cite{Pham} describing $\pi_3(A_d^3) \cong H_3(A_d^3;\bZ)$, and its extension by Looijenga \cite{LooijengaFermat} describing $H_3(X_d^3;\bZ)$. These are described in terms of the action of the abelian group $\mu_d^4 = \langle t_1, t_2, t_3, t_4 \, | \, t_1^d, t_2^d, t_3^d, t_4^d \rangle$ of 4-tuples of $d$-th roots of unity, which acts on $\bC\bP^4$ by
$$t_1^{a_1} \cdots t_4^{a_4} \cdot [z_0 : z_1 : \cdots : z_4] = [z_0 : t_1^{a_1}z_1 : t_2^{a_2}z_2 : t_3^{a_3} z_3 : t_4^{a_4}z_4] $$
and hence acts on $X_d^3$, and by restriction acts on $\bA^4$ and hence $A_n^d$ in the evident way. Pham showed that there is a $\bZ[\mu_d^4]$-module isomorphism
$$H_3(A^3_d ; \bZ) \cong \bZ[\mu_d^4]/(\sum_{k=0}^{d-1} t_i^k, i = 1,2,3,4),$$
and Looijenga showed that the surjection $H_3(A_d^3;\bZ) \to H_3(X_d^3;\bZ)$ is the quotient by the sub-$\bZ[\mu_d^4]$-module $I$ generated by the element $\sum_{k=0}^{d-1} (t_1t_2t_3t_4)^k$.

With the discussion above we get a map of extensions of $\bZ[\mu_d^4]$-modules:
\begin{equation*}
\begin{tikzcd}[column sep=0.4cm]
0 \rar & I \rar \arrow[d, two heads] & \bZ[\mu_d^4]/(\sum_{k=0}^{d-1} t_i^k) \rar \arrow[d, two heads] & \bZ[\mu_d^4]/(\sum_{k=0}^{d-1} (t_1t_2t_3t_4)^k, \sum_{k=0}^{d-1} t_i^k) \rar \arrow[d, equals]& 0\\
0 \rar& \bZ/d\{\eta\} \rar & \pi_3(X_d^3) \rar & H_3(X_d ; \bZ) \rar & 0.
\end{tikzcd}
\end{equation*}
The induced map of long exact sequences on $H_*(\mu_d^4 ; -)$ contains the portion
\begin{equation*}
\begin{tikzcd}[column sep=0.5cm]
\cdots \rar & H_0(\mu_d^4 ; I) \rar \arrow[d, two heads] & \bZ/d \rar \arrow[d, two heads] & \bZ/d \rar \arrow[d, equals]& 0\\
\cdots\rar& \bZ/d\{\eta\} \rar & H_0(\mu_d^4 ; \pi_3(X_d^3)) \rar & H_0(\mu_d^4 ; H_3(X_d^3 ; \bZ)) \rar & 0.
\end{tikzcd}
\end{equation*}
The top right map is surjective and therefore bijective as the two groups have the same size: thus the top left map is zero: but as the vertical left map is onto it then follows that the bottom left map is zero: i.e.\, $\bZ/d\{\eta\} \to \pi_3(X_d^3) \to H_0(\mu_d^4 ; \pi_3(X_d^3))$ is zero.

The argument will be finished once we show that the automorphisms of $\pi_3(X_d^3)$ given by the $t_i \in \mu_d^4$ are realised by monodromy, as then the class $\eta$ vanishes in $H_0(\Mon_d ; \pi_3(X_d^3))$ too, as required. To see this, recall that $PGL_5(\bC)$ acts on the space $\mathcal{U}_d \subset \mathbb{P}H^0(\bC\bP^4 ; \mathcal{O}(d))$ of smooth hypersurfaces, and $\mu_d^4 \leq PGL_5(\bC)$ stabilises $X_d$: thus acting on $X_d$ gives a based map $PGL_5(\bC)/\mu_d^4 \to \mathcal{U}_d$ and hence a homomorphism $\pi_1(PGL_5(\bC)/\mu_d^4) \to \Mon_d$. The domain fits into an extension
$$1 \lra \pi_1(PGL_5(\bC)) = \bZ/5 \lra \pi_1(PGL_5(\bC)/\mu_d^4)  \lra \mu_d^4 \lra 1$$
and any lift of $t_i \in \mu_d^4$ to $\pi_1(PGL_5(\bC)/\mu_d^4)$ gives an element of the monodromy group which acts on $\pi_3(X_d)$ as $t_i$. (It might not have order $d$ in $\Mon_d$, but this is not important.)
\end{proof}

\section{Further upper bounds: cobordism considerations}\label{sec:FurtherUpperBounds}

The discussion so far gives the extension
\begin{equation*}
1 \lra \mathrm{K}_d \lra \Stab_{\MCG_d}(\ell^{\hyp}_{X_d}) \lra \mathrm{Aut}(\pi_3(X_d), \lambda, \mu) \lra 1
\end{equation*}
from \eqref{eq:StabExt}, and shows that the image of $\alpha: \Mon_d \to \Stab_{\MCG_d}(\ell^{\hyp}_{X_d}) $ surjects to $\mathrm{Aut}(\pi_3(X_d), \lambda, \mu)$ (Theorem \ref{thm:MonodromyOnPi3}) and contains the image of $\Phi : \Theta_7 \to \mathrm{K}_d$ (Theorem \ref{thm:Krylov}). To prove Theorem \ref{thm:Main} we must therefore account for the cokernel of $\Phi$. The goal of this section will be to make sense of, and prove, the statement that $\alpha: \Mon_d \to \Stab_{\MCG_d}(\ell^{\hyp}_{X_d})$ does not hit the $\mathrm{Coker}(\Phi)$ part of $\mathrm{K}_d$. We formulate this as follows.

\begin{theorem}\label{thm:6.1}
There is a factorisation
\begin{equation*}
\mathrm{K}_d \lra \Stab_{\MCG_d}(\ell^{\hyp}_{X_d}) \overset{\kappa}\lra \mathrm{Coker}(\Phi)
\end{equation*}
of the natural quotient map, such that $\kappa \circ \alpha$ is trivial.
\end{theorem}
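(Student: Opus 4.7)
I would construct $\kappa$ by computing the abelianisation of $\Stab_{\MCG_d}(\ell^\hyp_{X_d})$ via a parameterised moduli-space argument, and then extract $\mathrm{Coker}(\Phi)$ as a natural quotient detected by a characteristic class that vanishes on algebraic monodromy. The first step is to identify $B\Stab_{\MCG_d}(\ell^\hyp_{X_d})$ with (the basepoint component of) the moduli space $\mathcal{M}^{\theta^\hyp}$ of closed 6-manifolds equipped with a $\theta^\hyp$-structure, based at $(X_d, \ell^\hyp_{X_d})$: concretely this moduli space is $B\Diff^{\theta^\hyp}(X_d)$, whose fundamental group is precisely the stabiliser. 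The $\theta^\hyp$-structure on the universal family $\mathcal{X}_d \to \mathcal{U}_d$ then produces a canonical classifying map $\mathcal{U}_d \to \mathcal{M}^{\theta^\hyp}$ whose effect on $\pi_1$ is the homomorphism $\alpha$.

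Since $d \geq 3$ guarantees that $X_d$ has $g \geq 5$ copies of $S^3\times S^3$ as connected summands, I would apply the Galatius--Randal-Williams stability and group completion theorems \cite{grwstab2} to identify $H_1(\mathcal{M}^{\theta^\hyp})$ with $H_1$ of the relevant component of the infinite loop space $\Omega^\infty MT\theta^\hyp$, hence with $\pi_1(MT\theta^\hyp)$. Here $MT\theta^\hyp$ is the Madsen--Tillmann spectrum, i.e.\ the Thom spectrum of the virtual bundle $-\theta^*\gamma_6$ over $B_6 = \bC\bP^\infty \times_{B\OO} B\OO(6)$. I would compute $\pi_1(MT\theta^\hyp)$ by an Atiyah--Hirzebruch spectral sequence with $E_2 = H^*(\bC\bP^\infty)\otimes \pi^s_*$ twisted by the Thom class, express it as an extension of explicit low-degree contributions, and isolate a natural surjection $\pi_1(MT\theta^\hyp) \twoheadrightarrow \mathrm{Coker}(\Phi)$. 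Defining $\kappa$ as the composition with $\Stab_{\MCG_d}(\ell^\hyp_{X_d})^{ab} \to \pi_1(MT\theta^\hyp)$, I would then verify, by tracking the inclusion $\mathrm{K}_d \hookrightarrow \Stab_{\MCG_d}(\ell^\hyp_{X_d})$ through the scanning map underlying GMTW, that the composition $\mathrm{K}_d \to \pi_1(MT\theta^\hyp) \twoheadrightarrow \mathrm{Coker}(\Phi)$ agrees with the natural quotient appearing in Lemma \ref{lem:WhatIsKd}.

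For the vanishing $\kappa \circ \alpha = 0$, I would exploit the complex-algebraic origin of the family $\mathcal{X}_d \to \mathcal{U}_d$. The $\theta^\hyp$-structure on this family is manifestly complex and factors through $B\U$, so the classifying map $\mathcal{U}_d \to \Omega^\infty MT\theta^\hyp$ lifts through the corresponding complex Madsen--Tillmann spectrum, whose $\pi_1$ contribution to $\mathrm{Coker}(\Phi)$ is forced to vanish for parity or $2$-adic reasons (the relevant torsion in $\mathrm{Coker}(\Phi)$ comes from $\pi_3(\OO)/\pi_3(\U) = 0$ together with mod-$p$ information which complex bundles cannot detect). Equivalently, the abelianisation $H_1(\mathcal{U}_d) \cong \bZ$ is generated by a loop around the discriminant, whose monodromy is a Dehn twist on a vanishing $S^3$, and I would check directly that $\kappa$ annihilates this class by evaluating the characteristic class on the corresponding mapping torus. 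The main obstacle is the explicit identification of the $\mathrm{Coker}(\Phi)$ quotient inside $\pi_1(MT\theta^\hyp)$ and its compatibility with the Kreck--Su description of $\mathrm{K}_d$: the Atiyah--Hirzebruch spectral sequence has several non-trivial contributions in total degree $1$ which must be carefully matched with the explicit $\bZ/2 \oplus \bZ/3$ computation of $\mathrm{Coker}(\Phi)$, requiring detailed bookkeeping with Chern classes of the virtual bundle $\mathcal{O}(1)^{\oplus 5} - \mathcal{O}(d) - \underline{\bC}^4$ on $\bC\bP^\infty$.
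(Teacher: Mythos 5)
Your overall architecture --- identify $H_1$ of a structured moduli space with $\pi_1$ of a Madsen--Tillmann spectrum via \cite{grwstab2}, extract $\mathrm{Coker}(\Phi)$ as a quotient detected by a characteristic class, and kill $\kappa\circ\alpha$ by reducing to a single Dehn twist --- matches the paper's. But there are concrete gaps. First, $\pi_1(\Theta^{\hyp}(X_d)\hcoker\Diff(X_d))$ is \emph{not} the stabiliser $\Stab_{\MCG_d}(\ell^\hyp_{X_d})$: it is the group $\MCG_d^\hyp$, which surjects onto the stabiliser with kernel the image of $\pi_1(\Theta^\hyp(X_d),\ell^\hyp_{X_d})$, a quotient of $KO^{-2}(X_d)$. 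A homomorphism out of $\pi_1^s(MT\theta^\hyp(6))$ therefore only descends to $H_1(\Stab_{\MCG_d}(\ell^\hyp_{X_d});\bZ)$ after one proves that the image of $KO^{-2}(X_d)$ dies under the candidate surjection onto $\mathrm{Coker}(\Phi)$. This is a substantial portion of the actual proof: one decomposes $KO^{-2}(X_d)$ into classes pulled back from $S^6$ (which map to multiples of $\iota\circ\sigma$) and classes pulled back from $\mathbb{CP}^\infty$ (whose images are shown to have high Adams filtration, using that $[X_d,\ell^\hyp_{X_d}]\in\pi_6^s(MT\bar{\theta}^\hyp)$ has positive filtration and that the relevant self-maps $G_*$ are trivial on mod $p$ cohomology). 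Your proposal omits this step entirely. Relatedly, $p_2$ and $p_3$ cannot be an arbitrary surjection onto $\bZ/2\oplus\bZ/3$: they must be the quotients by (essentially) the image of the unit $\iota_*:\pi_7^s(S^0)\to\pi_7^s(MT\bar{\theta}^\hyp)$, precisely so that the vanishing statements above and the Dehn twist argument go through; and pinning down $\pi_7^s(MT\bar{\theta}^\hyp)_{(2)}$ requires James periodicity and known stable homotopy of stunted projective spaces to resolve the differentials, not just an Atiyah--Hirzebruch chart.

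Second, your first mechanism for $\kappa\circ\alpha=0$ (a lift through a complex Madsen--Tillmann spectrum, ``$\pi_3(\OO)/\pi_3(\U)=0$'') does not work: $\bar{\theta}^\hyp$ already factors through $B\U$, so nothing is gained, and the $\bZ/2\oplus\bZ/3$ in $\mathrm{Coker}(\Phi)$ is unrelated to $\pi_3(\OO)/\pi_3(\U)$. Your second mechanism is the right one, but the content is missing: after reducing to the Dehn twist $\tau$ (via irreducibility of the discriminant and abelianness of the target), one must show that its mapping torus, cut along $S^1\times S^5$ and capped off over the complement of the plumbing $T^*S^3\natural T^*S^3$, is $\theta^\hyp$-cobordant to a $2$-connected and hence stably frameable manifold, so that $[T_\tau,\ell^\hyp_{T_\tau}]$ lies in $\iota_*(\pi_7^s(S^0))$ --- exactly the subgroup killed by $p_2\oplus p_3$. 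Finally, identifying $\mathrm{K}_d\to\mathrm{Coker}(\Phi)$ with the natural quotient by ``tracking through the scanning map'' is not realistic: one cannot trace specific diffeomorphisms supported near $S^2$ through the Pontrjagin--Thom construction. The paper argues indirectly: the composite is $2$- and $3$-locally surjective because $H_1(\mathrm{Aut}(\pi_3(X_d),\lambda,\mu);\bZ)$ is $\bZ/4$ or $0$ (hence locally trivial in the relevant congruence cases, making $\mathrm{K}_d\to H_1(\Stab_{\MCG_d}(\ell^\hyp_{X_d});\bZ)$ locally surjective), and it annihilates $\Phi(\Theta_7)$ because $\Phi(\Theta_7)\subseteq\mathrm{Im}(\alpha)$ by Krylov's theorem and $\kappa\circ\alpha=0$; together these force it to be the quotient by $\mathrm{Im}(\Phi)$.
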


This shows that $\alpha$ lands in $\mathrm{Ker}(\kappa: \Stab_{\MCG_d}(\ell^{\hyp}_{X_d}) \to \mathrm{Coker}(\Phi))$, which with our earlier discussion shows that this kernel is precisely $\mathrm{Im}(\alpha)$, giving an extension
\begin{equation*}
1 \lra \Phi(\Theta_7) \lra \mathrm{Im}(\alpha) \lra \mathrm{Aut}(\pi_3(X_d), \lambda, \mu) \lra 1,
\end{equation*}
which proves Theorem \ref{thm:Main}. 

The cokernel of $\Phi$ is described in Lemma \ref{lem:WhatIsKd} as
$$\mathrm{Coker}(\Phi) \cong \begin{cases}
\bZ/2 & d \equiv 0 \mod 4\\
0 & d \not\equiv 0 \mod 4
\end{cases} \oplus \begin{cases}
\bZ/3 & d \equiv 0 \mod 3\\
0 & d \not\equiv 0 \mod 3,
\end{cases}$$
so our strategy for proving Theorem \ref{thm:6.1} will be to analyse $H_1(\Stab_{\MCG_d}(\ell^{\hyp}_{X_d});\bZ)$, 2- and 3-locally. The method we will use is analogous to that of \cite{grwabelian}, though with significant additional difficulties.

\subsection{Calculating the abelianisation of $\Stab_{\MCG_d}(\ell^{\hyp}_{X_d})$}\label{sec:CalcAb}

Using the space $\Theta^{\hyp}(X_d)$ of $\theta^{\hyp}$-structures on $X_d$, we define
$$\MCG_d^{\hyp} := \pi_1(\Theta^{\hyp}(X_d) \hcoker \Diff(X_d), \ell^{\hyp}_{X_d}).$$
The long exact sequence on homotopy groups for this homotopy orbit space contains a portion
$$\pi_1(\Diff(X_d)) \lra \pi_1(\Theta^{\hyp}(X_d), \ell^{\hyp}_{X_d}) \lra \MCG_d^{\hyp} \lra \MCG_d \overset{\curvearrowright}\lra \theta^{\hyp}(X_d)$$
so gives a half-exact sequence
$$\pi_1(\Theta^{\hyp}(X_d), \ell^{\hyp}_{X_d}) \lra \MCG_d^{\hyp} \lra \Stab_{\MCG_d}(\ell^{\hyp}_{X_d}) \lra 1.$$

\subsubsection{Calculating the abelianisation of $\MCG_d^{\hyp}$}\label{sec:GRW}

The abelianisation of $\MCG_d^{\hyp}$ can be computed by cobordism-theoretic methods, using the results of \cite{grwcob, grwstab1, grwstab2} (proceeding similarly to \cite{grwabelian}). The result is given in terms of the corresponding unstable tangential structure defined by the homotopy cartesian square
\begin{equation}\label{eq:DefXid}
\begin{tikzcd}
B_6 \rar \dar{\theta^\hyp}& \bC\bP^\infty \dar{\bar{\theta}^{\hyp}}\\
B\OO(6) \rar& B\OO,
\end{tikzcd}
\end{equation}
specifically the Thom spectrum $MT\theta^\hyp(6) := Th(-(\theta^\hyp)^*\gamma_6)$ of the $(-6)$-dimensional virtual vector bundle $-(\theta^\hyp)^*\gamma_6$ over $B_6$.

\begin{proposition}
There is a map
\begin{equation*}
H_1(\MCG_d^{\hyp}; \bZ) \lra \pi_1^s(MT\theta^\hyp(6))
\end{equation*}
which is an isomorphism for $d \geq 3$.
\end{proposition}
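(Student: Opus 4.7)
The plan is to follow the template of \cite{grwabelian}: realise the classifying space whose $H_1$ we want as a component of a moduli space of $\theta^{\hyp}$-manifolds, map it to an infinite loop space via parametrised Pontryagin--Thom, and invoke Galatius--Randal-Williams homological stability.

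First I would identify the path component of $\Theta^{\hyp}(X_d) \hcoker \Diff(X_d)$ through $\ell^{\hyp}_{X_d}$ with the classifying space $B\Diff^{\theta^{\hyp}}(X_d, \ell^{\hyp}_{X_d})$ of the topological group of pairs $(\varphi, \gamma)$, where $\varphi \in \Diff(X_d)$ and $\gamma$ is a path in $\Theta^{\hyp}(X_d)$ from $\ell^{\hyp}_{X_d}$ to $\varphi^{*}\ell^{\hyp}_{X_d}$. By construction this has fundamental group $\MCG_d^{\hyp}$, so $H_1(\MCG_d^{\hyp};\bZ)$ is its first integral homology. The parametrised Pontryagin--Thom construction of \cite{grwcob} then yields a scanning map
$$\alpha_{X_d}: B\Diff^{\theta^{\hyp}}(X_d, \ell^{\hyp}_{X_d}) \lra \Omega^\infty_{\bullet} MT\theta^{\hyp}(6)$$
onto the component determined by the $\theta^{\hyp}$-bordism class of $(X_d, \ell^{\hyp}_{X_d})$. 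The target is an infinite loop space, so its $H_1$ is the abelian group $\pi_1 = \pi_1^s(MT\theta^{\hyp}(6))$; this identifies the homomorphism we want to construct.

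The crux is to show this scanning map is an $H_1$-isomorphism, for which I would apply the homological stability theorem of \cite{grwstab1, grwstab2} together with its identification of stable homology. The stable range for $H_1$ requires only a small constant number of disjoint $S^3 \times S^3$-summands in $X_d$, comfortably met by $g \geq 5$ when $d \geq 3$. The admissibility conditions on the tangential structure are straightforward to verify: $\bC\bP^\infty$ is path-connected (in fact simply connected), and $\bar{\theta}^{\hyp}$ factors through the infinite loop space $B\U$.

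The main obstacle is the compatibility between the canonical structure $\ell^{\hyp}_{X_d}$ and the basepoint structure used in the stabilisation argument of \cite{grwstab2}: one must exhibit $(X_d, \ell^{\hyp}_{X_d})$ as obtained from $(W_{g,1}, \ell_{\mathrm{std}})$ by iterated boundary-connect-sum with a standardly structured $S^3 \times S^3$-piece, so that the stable moduli space to which the stability theorem applies really contains our basepoint. Since $S^3 \times S^3$ is stably parallelisable and $\bar{\theta}^{\hyp}$ factors through $B\U$, extending the structure across the pair-of-pants cobordisms is elementary, but checking that the iterated stabilisation lands on the specific path component of the moduli space determined by $\ell^{\hyp}_{X_d}$---rather than on some other $\theta^{\hyp}$-structure with the same underlying orientation---requires inspecting the set $\theta^{\hyp}(X_d)$ in the style of Section \ref{sec:ThetaHyp} and reducing the question to connectivity of the moduli space through a range.
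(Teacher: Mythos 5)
Your overall architecture matches the paper's: realise the path component of $\ell^{\hyp}_{X_d}$ in $\Theta^{\hyp}(X_d) \hcoker \Diff(X_d)$ as a classifying space with fundamental group $\MCG_d^{\hyp}$, map it to $\Omega^\infty MT\theta^{\hyp}(6)$ by the parametrised Pontrjagin--Thom construction of \cite{grwcob}, and invoke \cite[Theorem 1.8]{grwstab2} to get a homology isomorphism in degree $1$. However, you do not verify the two hypotheses of that theorem which carry all the content of the proof, and the verifications you do offer are aimed at the wrong conditions. First, \cite[Theorem 1.8]{grwstab2} requires the structure map $\ell^{\hyp}_{X_d} : X_d \to B_6$ to be $n$-connected with $n=3$. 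This is far from automatic for a $\theta^{\hyp}$-structure on a 6-manifold and is precisely why the theorem applies here: it holds because $X_d \to \bC\bP^4 \to \bC\bP^\infty$ is 3-connected by the Lefschetz hyperplane theorem, and $B_6 \to \bC\bP^\infty$ is 6-connected, being pulled back from the 6-connected map $B\OO(6) \to B\OO$. Your proposal never mentions this hypothesis.

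Second, the range of the homology isomorphism is governed by $\bar{g}^{\hyp}(X_d,\ell^{\hyp}_{X_d})$, which counts disjointly embedded copies of $W_{1,1}$ on which the restriction of $\ell^{\hyp}_{X_d}$ is \emph{admissible} in the sense of \cite[Definition 1.3]{grwstab1}; it is not simply the number of $S^3 \times S^3$ summands of $X_d$. The reasons you give for dismissing this point --- that $\bC\bP^\infty$ is simply connected and that $\bar{\theta}^{\hyp}$ factors through $B\U$ --- are not the relevant conditions (the first is a hypothesis on $B$, the second plays no role). What one actually uses is \cite[Remark 7.16]{grwstab1}: since $H_3(B_6;\bZ)=0$ and $(H_3(X_d;\bZ),\lambda)$ is a sum of $g$ hyperbolic forms with $g \geq 5$ for $d \geq 3$, one gets $\bar{g}^{\hyp}(X_d,\ell^{\hyp}_{X_d}) \geq g \geq 5$, which places degree $1$ in the stable range $2* \leq \bar{g}^{\hyp}-3$. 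Finally, the ``main obstacle'' you identify --- exhibiting $(X_d,\ell^{\hyp}_{X_d})$ as an iterated boundary-connect-sum of a standardly structured $(W_{g,1},\ell_{\mathrm{std}})$ and worrying about which path component the stabilisation lands in --- is not needed: \cite[Theorem 1.8]{grwstab2} is stated for an arbitrary manifold equipped with an $n$-connected $\theta$-structure and applies directly to the given path component, the stabilisation being internal to its proof. Your effort is thus directed at a non-issue while the two genuine hypotheses go unchecked.
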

\begin{proof}
The parameterised Pontrjagin--Thom construction gives a map 
\begin{equation}\label{eq:PTmap}
\Theta^{\hyp}(X_d) \hcoker \Diff(X_d) \lra \Omega^\infty MT\theta^\hyp(6).
\end{equation}
The lift $\ell^\hyp_{X_d} : X_d \to B_6$ of $\tau_{X_d} : X_d \to B\OO(6)$ is 3-connected, because $X_d \to \mathbb{CP}^4 \to \mathbb{CP}^\infty$ is by the Lefschetz hyperplane theorem, and $B_6 \to \mathbb{CP}^\infty$ is 6-connected by the homotopy cartesian square \eqref{eq:DefXid}. Thus \cite[Theorem 1.8]{grwstab2} applies to the path component of $\ell^\hyp_{X_d}$ in $\Theta^{\hyp}(X_d) \hcoker \Diff(X_d)$ and shows that the map \eqref{eq:PTmap} restricted to this path-component is an isomorphism on homology in degrees satisfying $2* \leq \bar{g}^\hyp(X_d, \ell^\hyp_{X_d})-3$. To obtain an isomorphism in degree 1, we must argue that $\bar{g}^\hyp(X_d, \ell^\hyp_{X_d}) \geq 5$.

The quantity $\bar{g}^\hyp(X_d, \ell^\hyp_{X_d})$ is described in \cite[Section 1.3]{grwstab2}, and is bounded below by ${g}^\hyp(X_d, \ell^\hyp_{X_d})$, the largest number of disjointly embedded copies of $W_{1,1} := (S^3 \times S^3) \setminus \mathrm{int}(D^6)$ into $X_d$ on which the restriction of $\ell^\hyp_{X_d}$ is ``admissible'' (\cite[Definition 1.3]{grwstab1}). We shall not need to go into the definition of admissible, because we can apply \cite[Remark 7.16]{grwstab1}: the intersection form $(H_3(X_d;\bZ), \lambda)$ is isomorphic to a sum of $g := \tfrac{d^4-5d^3+10d^2-10d + 4}{2}$ hyperbolic forms, and $H_3(B_6;\bZ)=0$, so by that remark ${g}^\hyp(X_d, \ell^\hyp_{X_d}) \geq g$. For $d \geq 3$ we have $g \geq 5$, and so indeed have $\bar{g}^\hyp(X_d, \ell^\hyp_{X_d}) \geq 5$.
\end{proof}

Proceeding in parallel to \cite[Section 5]{grwabelian}, the 6-connected map $B_6 \to \mathbb{CP}^\infty$ gives a 0-connected map
$$MT\theta^\hyp(6) \lra \Sigma^{-6} MT\bar{\theta}^\hyp$$
on Thomifying, where $MT\bar{\theta}^\hyp$ is the Thom spectrum of \emph{minus} the 0-dimensional virtual vector bundle classified by $\bar{\theta}^\hyp : \mathbb{CP}^\infty \to B\OO$. This produces a long exact sequence
$$\pi_8^s(MT\bar{\theta}^\hyp) \overset{\partial}\lra \bZ/4 \cong \pi_7^s(\SO/\SO(6)) \lra \pi_1^s(MT\theta^\hyp(6)) \lra \pi_7^s(MT\bar{\theta}^\hyp) \lra 0.$$
(The isomorphism and the 0 at the right-hand end follow from calculations of Paechter \cite{PaechterI} collected in \cite[Lemma 5.2]{grwabelian}: the 0 is because $\pi_7^s(MT\bar{\theta}^\hyp)$ is easily seen to be torsion and the next term is really $\pi_6^s(\SO/\SO(6)) \cong \bZ$.)

\subsubsection{Calculating $\pi_1(\Theta^{\hyp}(X_d), \ell^{\hyp}_{X_d})$}\label{sec:Reframing}

The group $\pi_1(\Theta^{\hyp}(X_d), \ell^{\hyp}_{X_d})$ can be approached by the same method as in Section \ref{sec:ThetaHyp}, which gives an exact sequence
$$\bZ = H^0(X_d;\bZ) \overset{(\theta^{\hyp})_*}\lra KO^{-2}(X_d) \lra \pi_1(\Theta^{\hyp}(X_d), \ell^{\hyp}_{X_d}) \lra H^1(X_d;\bZ) = 0.$$ 
Thus there is an exact sequence
$$KO^{-2}(X_d) \lra H_1(\MCG_d^{\hyp};\bZ) \lra H_1(\Stab_{\MCG_d}(\ell^{\hyp}_{X_d});\bZ) \lra 0.$$

Let us describe the composition
$$KO^{-2}(X_d) \lra H_1(\MCG_d^{\hyp};\bZ) \overset{\sim}\lra \pi_1^s(MT\theta^\hyp(6)).$$
In geometric terms it is given as follows. There is a map of vector bundles $\ell': T(S^1 \times X_d) \cong \underline{\bR} \oplus \pi_2^* TX_d \to \underline{\bR} \oplus (\theta^\hyp)^*\gamma_6$ induced by the $\theta^\hyp$-structure $\ell : TX_d \to (\theta^\hyp)^*\gamma_6$ on $X_d$, which via the Pontrjagin--Thom correspondence defines a class $[S^1 \times X_d , \ell'] \in \pi_1^s(MT\theta^\hyp(6))$. This is in fact $\eta \cdot [X_d,\ell]$. The map above is then given by
$$KO^{-2}(X_d) \overset{s \boxtimes -}\lra KO^{-1}(S^1 \times X_d) \lra \pi_1^s(MT\theta^\hyp(6))$$
where $s \in KO^1(S^1)$ is the suspension class, and the second map sends an element $[f : S^1 \times X_d \to \OO]$, with $f$ considered as a stable isomorphism of the vector bundle $T(S^1 \times X_d)$, to the class $[S^1 \times X_d, \ell' \circ f]$. In homotopy-theoretic terms it is given as follows. It sends the class $[g : S^1 \wedge (X_d)_+ \to \OO] \in KO^{-2}(X_d)$ to the class
\begin{equation*}
\begin{tikzcd}[column sep = 0.6cm]
S^1 \wedge S^0 \dar{S^1 \wedge [X_d]} & & &  MT\theta^\hyp(6) \\
S^1 \wedge Th(\nu_{X_d}) \arrow[r, "\substack{Thom \\ diag.}"] \arrow[rrr, bend right=20, "g_*"] & S^1 \wedge (X_d)_+ \wedge Th(\nu_{X_d}) \rar{g \wedge Id}\rar & \OO_+ \wedge Th(\nu_{X_d}) \rar{\text{act}} & Th(\nu_{X_d}). \uar{Th(\ell)}
\end{tikzcd}
\end{equation*}
Here $g_*$ is defined to be the lower composition, $[X_d] : S^0 \to Th(\nu_{X_d})$ is the Thom collapse map for $X_d$, and we have used the action of $\OO$ on all spectra, via $J : \OO \to GL_1(S^0)$.

\subsection{Cobordism calculation: 3-torsion}\label{sec:3tors}

We first treat the simpler case of the 3-torsion in $\Coker(\Phi)$, which occurs only when $d \equiv 0 \mod 3$. 

\begin{proposition}
Suppose $d \equiv 0 \mod 3$, $d \geq 3$. Then
\begin{enumerate}[(i)]
\item there is a surjection $p_3 : \pi_7^s(MT\bar{\theta}^\hyp)_{(3)} \to \bZ/3$, such that

\item the composition $KO^{-2}(X_d)_{(3)} \to \pi_7^s(MT\bar{\theta}^\hyp)_{(3)} \overset{p_3}\to \bZ/3$ is trivial.

\end{enumerate}
\end{proposition}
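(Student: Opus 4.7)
My approach combines the Atiyah--Hirzebruch spectral sequence (AHSS) for $\pi^s_*(MT\bar{\theta}^\hyp)$ with the explicit homotopy-theoretic description of the map $KO^{-2}(X_d) \to \pi^s_7(MT\bar{\theta}^\hyp)$ from Section~\ref{sec:Reframing}. Since $\bar{\theta}^\hyp$ classifies a rank-zero virtual bundle, the Thom isomorphism gives $H_*(MT\bar{\theta}^\hyp;\bZ) \cong H_*(\bC\bP^\infty;\bZ)$, which is $\bZ$ in even nonnegative degrees and zero otherwise. Localising the AHSS $E^2_{s,t} = H_s(MT\bar{\theta}^\hyp; \pi^s_t)$ at $3$, and using $\pi^s_{1,(3)} = \pi^s_{5,(3)} = 0$, $\pi^s_{3,(3)} \cong \bZ/3\{\alpha_1\}$, and $\pi^s_{7,(3)} \cong \bZ/3$ (via the image of $J$), the only nontrivial entries of total degree $7$ are $E^2_{0,7}$ and $E^2_{4,3}$, each isomorphic to $\bZ/3$. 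Differentials shift between columns of different parity while the filtration is supported on even columns, so both entries survive to $E^\infty$. I define $p_3$ as the canonical surjection onto $E^\infty_{4,3} \cong \bZ/3$; its surjectivity is witnessed by the image of $\alpha_1 \cdot [\bC\bP^2]$ under the inclusion of the 4-skeleton $\bC\bP^2 \subset \bC\bP^\infty$ composed with the zero-dimensional Thom isomorphism.

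For (ii), I decompose $KO^{-2}(X_d)_{(3)}$ via its own AHSS: its only nonvanishing 3-local filtration quotients come from $H^2(X_d;\bZ) \otimes \pi_4(KO)_{(3)} \cong \bZ_{(3)}$ (the Bott generator $\alpha$) and $H^6(X_d;\bZ) \otimes \pi_8(KO)_{(3)} \cong \bZ_{(3)}$ (the Bott $\beta$), since $KO^{-2}(*) = \bZ/2$ vanishes at $3$ and all other relevant stems vanish. It thus suffices to verify $p_3$-triviality on a generator of each summand. Following the geometric/homotopy-theoretic description of Section~\ref{sec:Reframing}, a diagram chase puts the image of the $H^6$-generator into AHSS filtration $\geq 6$ of $\pi^s_7(MT\bar{\theta}^\hyp)$, hence into the kernel of $p_3$ (whose target sits in filtration~$4$). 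The $H^2$-generator's image in $E^\infty_{4,3}$ is the delicate case.

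The main obstacle will be verifying that this $H^2$-contribution vanishes modulo $3$ precisely when $3 \mid d$. The calculation should reduce to a numerical invariant: the Chern character of $\ell^\hyp_{X_d}$, i.e.\ of $\mathcal{O}(1)^{\oplus 5} - \mathcal{O}(d) - \underline{\bC}^4$, evaluated on the relevant low-dimensional cells of $\bC\bP^\infty$, weighted by the $J$-homomorphism coefficient sending $\alpha \in \pi_4(KO)$ to the appropriate image-of-$J$ class detected by $\alpha_1$. The resulting numerical coefficient is a polynomial in $d$ whose $3$-adic valuation jumps under $3 \mid d$, matching the $\bZ/3$ factor in $\mathrm{Coker}(\Phi)$ identified in Lemma~\ref{lem:WhatIsKd}. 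This careful numerical bookkeeping---tracking filtration shifts across two AHSSs simultaneously and applying the $J$-homomorphism at the correct place---is what I expect to be the technical heart of the argument.
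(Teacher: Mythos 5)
There is a genuine gap in part (i), and it sits exactly where the hypothesis $d \equiv 0 \bmod 3$ has to enter. You claim that both $E^2_{0,7}$ and $E^2_{4,3}$ of the Atiyah--Hirzebruch spectral sequence survive to $E^\infty$ because ``differentials shift between columns of different parity''. That only rules out the differentials $d_r$ with $r$ odd: $d_r$ changes the column index by $r$, so every $d_r$ with $r$ even connects columns of the \emph{same} parity, and since $H_*(MT\bar{\theta}^\hyp)$ is concentrated in even degrees these are precisely the differentials that can be nonzero. In particular there is a potential $d_4 : E_{8,0} \to E_{4,3}$, which $3$-locally is (up to a unit) multiplication by $\alpha_1$ composed with the dual of $\mathcal{P}^1 : H^4(MT\bar{\theta}^\hyp;\bF_3) \to H^8(MT\bar{\theta}^\hyp;\bF_3)$. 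A Cartan-formula computation with the twisted Thom module structure (this is the computation the paper does for its Adams chart) gives $\mathcal{P}^1(u\cdot x^2) = (d^2+1)u\cdot x^4 + 2u\cdot x^4 = d^2\, u \cdot x^4$, which vanishes if and only if $3 \mid d$. So for $3 \nmid d$ this $d_4$ kills $E_{4,3}$ and your $p_3$ does not exist, while for $3 \mid d$ it vanishes and your conclusion holds --- but your argument never uses the hypothesis and would ``prove'' the statement for all $d$, which is false (and would contradict Lemma \ref{lem:WhatIsKd}). You must supply this Steenrod/$k$-invariant computation; once you do, your AHSS route is a legitimate alternative to the paper's Adams spectral sequence argument, and your $p_3$ (quotient by $E^\infty_{0,7}$, i.e.\ by the image of the bottom cell $\iota_*\pi_7^s(S^0)_{(3)}$) agrees with the paper's (quotient by Adams filtration $\geq 2$).

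Part (ii) also has problems. For the $H^6$-generator your filtration bookkeeping is inverted: in total degree $7$ the $3$-local AHSS filtration tops out at column $4$, so there are no classes of ``filtration $\geq 6$''; the correct statement is that this generator is pulled back along the collapse map $X_d \to S^6$ and hence its image factors through the \emph{bottom} cell, landing in $F_0 = \mathrm{im}(\iota_*) = \ker(p_3)$ --- the opposite end of the filtration from where you place it. For the $H^2$-generator you have only a plan, not a proof, and the expectation that a Chern-character/$J$-homomorphism coefficient will vanish ``precisely when $3 \mid d$'' is misplaced: the $d$-dependence lives in part (i), not here. The paper disposes of this case softly: such a class maps to $G_* \circ (S^1 \wedge [X_d, \ell^\hyp_{X_d}])$ where $[X_d,\ell^\hyp_{X_d}] \in \pi_6^s(MT\bar{\theta}^\hyp)$ has positive Adams filtration and $G_*$ is zero on $\bF_3$-cohomology, so the composite has filtration $\geq 2$ and dies under $p_3$; an analogous skeletal/filtration argument would work in your AHSS setup and would spare you the numerical bookkeeping entirely.
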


We calculate $\pi_7^s(MT\bar{\theta}^\hyp)_{(3)}$ using the Adams spectral sequence\footnote{The $E_2$-pages of the Adams spectral sequences in this and the following section were computed using the $\mathrm{Ext}$-calculator at \url{https://spectralsequences.github.io/sseq/}, and typeset using the {\tt spectralsequences} LaTeX package. Both are excellent.} at $p=3$. There is an isomorphism $H^*(MT\bar{\theta}^\hyp;\bF_3) = u \cdot H^*(\bC\bP^\infty;\bF_3)$, and the action of the Steenrod algebra is twisted. This is well-known at $p=2$, but less well known at odd primes so we briefly explain it. Write $\mathcal{P} = Id + \mathcal{P}^1 + \mathcal{P}^2 + \cdots$. If $u$ is the Thom class of a complex line bundle with Euler class $x$ then we have $\mathcal{P}(u) = u+u^p = u \cdot (1+x^{p-1})$ by the axioms of Steenrod operations and the definition of the Euler class. If $u$ is the Thom class of a sum of line bundles with Euler classes $x_1, x_2, \ldots, x_r$ then this gives
$$\mathcal{P}(u) = u \cdot \prod_{i=1}^r (1+x_i^{p-1}) = u \cdot \sum_{j=0}^r e_j(x_1^{p-1}, \ldots, x_r^{p-1})$$
where $e_j$ are the elementary symmetric polynomials. The expression $e_j(x_1^{p-1}, \ldots, x_r^{p-1})$ is again a symmetric polynomial, known as the $(p-1)$th Frobenius of $e_j$, so may be expressed in terms of the elementary symmetric polynomials $e_k(x_1, \ldots, x_r)$, i.e.\ the Chern classes of the original sum of line bundles. By the splitting principle this expression then holds for any complex vector bundle, and hence for any virtual bundle too.

In the case at hand we have $c(-\bar{\theta}^{\hyp}) = 1 + (d +1 ) x + d x^2 + x^3 + (1 + d) x^4 + O(x^5) \mod 3$, and  so
\begin{align*}
\mathcal{P}(u) &= u \cdot (1 + (c_1^2 - 2 c_2) + (c_2^2 - 2 c_3 c_1 + 2 c_4) + \cdots)\\
&= u \cdot (1 + (d^2+1)x^2 + d^2 x^4 + O(x^5)).
\end{align*}
In particular when $d \equiv 0 \mod 3$ we have $\mathcal{P}(u) = u \cdot (1 + x^2 + O(x^5))$. Then $H^*(MT\bar{\theta}^\hyp;\bF_3) = u \cdot H^*(\bC\bP^\infty;\bF_3)$ has generating Steenrod operations
$$\mathcal{P}^1(u) = u \cdot x^2, \quad \mathcal{P}^1(u \cdot x) = 2 u \cdot x^3$$
in degrees $\leq 9$. Using this we can calculate the $E^2$-page of the Adams spectral sequence in a range, which is shown in Figure \ref{fig:p3d0mod3}.

\begin{figure}[h]
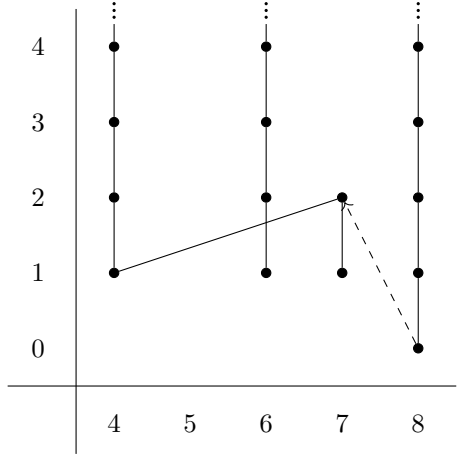

\begin{sseqdata}[ name = p3d0mod3, Adams grading, classes=fill, x range = {4}{8}, y range = {0}{4}]

\class(4,1)
\class(4,2)
\structline(4,1)(4,2)
\class(4,3)
\structline(4,2)(4,3)
\class(4,4)
\structline(4,3)(4,4)
\class(4,5)
\structline(4,4)(4,5)

\class(6,1)
\class(6,2)
\structline(6,1)(6,2)
\class(6,3)
\structline(6,2)(6,3)
\class(6,4)
\structline(6,3)(6,4)
\class(6,5)
\structline(6,4)(6,5)

\class(7,1)
\class(7,2)
\structline(7,1)(7,2)
\structline(4,1)(7,2)

\class(8,0)
\class(8,1)
\structline(8,0)(8,1)
\class(8,2)
\structline(8,1)(8,2)
\class(8,3)
\structline(8,2)(8,3)
\class(8,4)
\structline(8,3)(8,4)
\class(8,5)
\structline(8,4)(8,5)

\d[dashed]2(8,0)(7,2)
\end{sseqdata}
\printpage[ name = p3d0mod3, page = 2 ]
\caption{$E_2$-page of the Adams spectral sequence at $p=3$ for $MT\bar{\theta}^\hyp$, with $d \equiv 0 \mod 3$.}
\label{fig:p3d0mod3}
\end{figure}

We see that $\pi_7^s(MT\bar{\theta}^\hyp)_{(3)}$ is either $\bZ/9$ or $\bZ/3$, depending on the differential coming out of the 8-column, but no smaller. In either case, taking the quotient by those classes of $\bF_3$-Adams filtration $\geq 2$ gives a surjection
$$p_3 : \pi_7^s(MT\bar{\theta}^\hyp)_{(3)} \lra \bZ/3.$$

We now wish to show that the composition $KO^{-2}(X_d)_{(3)} \to \pi_7^s(MT\bar{\theta}^\hyp)_{(3)} \overset{p_3}\to \bZ/3$ is trivial. To do so, we make use of the following types of classes in $KO^{-2}(X_d)$.

\begin{lemma}\label{lem:ClassTypes}
The abelian group $KO^{-2}(X_d)$ is generated by the following classes:
\begin{enumerate}[(i)]
\item The pullback along the map $c : X_d \to S^6$ that collapses the complement $X'_d \subset X_d$ of a ball of the Bott class $\Sigma^{-2} \beta \in KO^{-2}(S^6)$,

\item classes pulled back along $\ell^\hyp_{X_d} : X_d \to \mathbb{CP}^\infty$.
\end{enumerate}
\end{lemma}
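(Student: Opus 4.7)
The plan is to compute $KO^{-2}(X_d)$ via the Atiyah--Hirzebruch spectral sequence (AHSS) $E_2^{p,q} = H^p(X_d; \pi_{-q}KO) \Rightarrow KO^{p+q}(X_d)$, identify the pieces of $E_\infty$ in total degree $-2$, and check that each is realised as the AHSS leading term of a class of type (i) or (ii); a standard induction over the AHSS filtration then finishes the argument.

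First I would identify the $E_2$-page in total degree $-2$. Because $H^*(X_d;\bZ)$ is concentrated in degrees $0,2,3,4,6$ and $\pi_*(KO)$ is $8$-periodic with $(\bZ, \bZ/2, \bZ/2, 0, \bZ, 0, 0, 0, \bZ, \ldots)$, only
\[
E_2^{0,-2} = \bZ/2, \quad E_2^{2,-4} = \bZ\{x\}, \quad E_2^{6,-8} = \bZ\{xy\}
\]
are nonzero. A bidegree check shows that every potential differential into or out of these bidegrees lands in a trivial group---either $H^p(X_d) = 0$ or $\pi_{-q}(KO) = 0$---with the single borderline case being $d_3 \colon E_3^{0,-2} = \bZ/2 \to E_3^{3,-4} \subseteq H^3(X_d;\bZ) = \bZ^{2g}$, which must vanish because its target is torsion-free. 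Hence $E_\infty^{p,-2-p} = E_2^{p,-2-p}$ at these three bidegrees.

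Next I would realise each of the three pieces as the AHSS leading term of a type (i) or (ii) class. For $E_\infty^{6,-8} = H^6(X_d;\bZ)$: since $c$ has degree one, $c^*$ is an isomorphism on $H^6$, and by naturality the class $c^*(\Sigma^{-2}\beta)$ has leading term a generator. For $E_\infty^{0,-2} = \bZ/2$: the pullback of the nonzero element of $KO^{-2}(\mathrm{pt})$ along $X_d \xrightarrow{\ell^\hyp_{X_d}} \bC\bP^\infty \to \mathrm{pt}$ suffices. The main obstacle is $E_\infty^{2,-4} = \bZ\{x\}$, for which I need a class $u \in KO^{-2}(\bC\bP^\infty)$ whose AHSS leading term generates $E_\infty^{2,-4}(\bC\bP^\infty;KO)$; then naturality together with $(\ell^\hyp_{X_d})^*\iota_2 = x$ makes $(\ell^\hyp_{X_d})^*u$ a generator of $E_\infty^{2,-4}(X_d;KO)$. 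I would take $u := r\bigl((L-1)\beta\bigr)$, where $L \to \bC\bP^\infty$ is the tautological line bundle, $\beta \in \pi_2(KU)$ is the Bott element, and $r \colon KU \to KO$ is realification, and verify the leading term by complexifying: using $c \circ r = 1 + (\text{complex conjugation})$ with $\overline{L-1} = L^{-1}-1$ and $\bar\beta = -\beta$ gives $c(u) = (L-L^{-1})\beta$, whose Chern character $2\sinh(x)\cdot\beta = 2x\beta + O(x^3)$ shows that $c(u)$ has AHSS leading term $2\iota_2\beta^2 \in E_\infty^{2,-4}(\bC\bP^\infty;KU) = \bZ$. Since $c_* \colon \pi_4(KO) \to \pi_4(KU)$ is multiplication by $2$, this forces the leading term of $u$ itself to be a generator of $E_\infty^{2,-4}(\bC\bP^\infty;KO) = \bZ$, as required.

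Finally, a routine downward induction on the AHSS filtration---subtract off a leading term realised by (i) or (ii), then pass to the next filtration stage---shows that classes of these two types generate $KO^{-2}(X_d)$ as an abelian group. The only non-formal step in the plan is the explicit construction of the $E_\infty^{2,-4}$ generator; everything else is dimensional bookkeeping and naturality of the AHSS.
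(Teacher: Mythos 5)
Your proof is correct and follows essentially the same route as the paper: the Atiyah--Hirzebruch spectral sequence for $KO^{-2}(X_d)$ degenerates with filtration quotients $H^0(X_d;\bZ/2)$, $H^2(X_d;\bZ)$ and $H^6(X_d;\bZ)$, which one saturates by the constant class, type (ii) classes, and the type (i) class respectively. Your explicit generator $r\bigl((L-1)\beta\bigr) \in KO^{-2}(\bC\bP^\infty)$, verified via complexification and the Chern character, supplies a detail the paper leaves implicit in the assertion that ``the map of Atiyah--Hirzebruch spectral sequences \ldots\ shows that the right-hand term can be saturated by classes of type (ii)''.
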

\begin{proof}
Clearly the image of $KO^{-2}(*) \to KO^{-2}(X_d)$ is of type (ii). The Atiyah--Hirzebruch spectral sequence gives an extension
$$0\lra H^6(X_d ; \bZ) \lra \widetilde{KO}^{-2}(X_d)\lra H^2(X_d ; \bZ) \lra 0$$
and the left-hand term corresponds to the classes of type (i). The map of Atiyah--Hirzebruch spectral sequences for $\ell^\hyp_{X_d} : X_d \to \mathbb{CP}^\infty$ shows that the right-hand term can be saturated by classes of type (ii).
\end{proof}

For classes of type (i), we consider the commutative diagram
\begin{equation*}
\begin{tikzcd}[column sep=0.3cm]
\widetilde{KO}^{-2}(S^6) \dar{\cong}\\
{KO}^{-2}(X_d, X'_d) \rar \dar& \pi_1^s(Th(\nu_{X_d}), Th(\nu_{X_d}\vert_{X'_d})) \dar \arrow[r, equals]& \pi_7^s(S^0) \dar \arrow[rd, "\iota"]\\
{KO}^{-2}(X_d) \rar{}& \pi_1^s(Th(\nu_{X_d})) \rar{Th(\ell)_*}& \pi_1^s(MT{\theta}^\hyp(6)) \rar & \pi_7^s(MT\bar{\theta}^\hyp).
\end{tikzcd}
\end{equation*}
Recall\footnote{This follows because Adams' $e$-invariant map $e'_\mathbb{R} : \pi_7^s(S^0) \to \bQ/\bZ$ is an isomorphism onto $\bZ[\tfrac{1}{240}]/\bZ$ \cite[p.\ 46]{AdamsJX4}, and the octonionic Hopf fibration is the sphere bundle of the tautological octonionic line bundle over $S^8 = \mathbb{OP}^1$, whose reduced class generates $\widetilde{KO}^0(S^8)$, so $\sigma = \pm j_7$ in the notation of Adams' paper and so $e'_\bR(\sigma) = \pm \tfrac{1}{240}$ by \cite[Example 7.17]{AdamsJX4}.} that the octonionic Hopf fibration $S^{15} \to S^8$ represents a class $\sigma \in \pi_7^s(S^0)$,
and that $\pi_7^s(S^0) = \bZ/240\{\sigma\}$. Hence the diagram shows that $c^*(\Sigma^{-2}\beta)$ maps to a multiple of $\iota \circ \sigma$ in $\pi_7^s(MT\bar{\theta}^\hyp)$, but $\sigma \in \pi_7^s(S^0)$ has $\bF_3$-Adams filtration 2 so $\iota \circ \sigma$ has $\bF_3$-Adams filtration $\geq 2$, but this is precisely what we divided out to form the quotient $p_3 : \pi_7^s(MT\bar{\theta}^\hyp)_{(3)} \to \bZ/3$.

A class $[g] \in {KO}^{-2}(X_d)$ of type (ii) is by definition pulled back from some $[G] \in {KO}^{-2}(\bC\bP^\infty)$. By the construction at the end of Section \ref{sec:Reframing} this induces a map
\begin{equation*}
G_* : S^1 \wedge MT\bar{\theta}^\hyp \lra MT\bar{\theta}^\hyp
\end{equation*}
and by naturality the image of $[g] \in {KO}^{-2}(X_d)$ in $\pi_7^s(MT\bar{\theta}^\hyp)$ is given by
$$S^1 \wedge S^6 \overset{S^1 \wedge [X_d, \ell^\hyp_{X_d}]}\lra S^1 \wedge MT\bar{\theta}^\hyp \overset{G_*} \lra MT\bar{\theta}^\hyp,$$
for $[X_d, \ell^\hyp_{X_d}] \in \pi_6^s(MT\bar{\theta}^\hyp)$.

In the Adams chart for $MT\bar{\theta}^\hyp$ in Figure \ref{fig:p3d0mod3} we see that $[X_d, \ell^\hyp_{X_d}] \in \pi_6^s(MT\bar{\theta}^\hyp)$ has $\bF_3$-Adams filtration $\geq 1$. Furthermore, the map $G_*$ is trivial on $\bF_3$-cohomology, so has $\bF_3$-Adams filtration $\geq 1$. It follows that the image of $[g]$ in $\pi_7^s(MT\bar{\theta}^\hyp)$ has $\bF_3$-Adams filtration $\geq 2$, but again this is precisely what we divided out to form the quotient $p_3 : \pi_7^s(MT\bar{\theta}^\hyp)_{(3)} \to \bZ/3$.

\subsection{Cobordism calculation: 2-torsion}\label{sec:2tors}

We now treat the 2-torsion in $\Coker(\Phi)$, which occurs only when $d \equiv 0 \mod 4$. It is parallel to the 3-torsion case just described, but somewhat more complicated. 

\begin{proposition}
Suppose $d \equiv 0 \mod 4$, $d \geq 4$. Then
\begin{enumerate}[(i)]
\item there is a surjection $p_2 : \pi_7^s(MT\bar{\theta}^\hyp)_{(2)} \to \bZ/2$, such that

\item the composition $KO^{-2}(X_d)_{(2)} \to \pi_7^s(MT\bar{\theta}^\hyp)_{(2)} \overset{p_2}\to \bZ/2$ is trivial.

\end{enumerate}
\end{proposition}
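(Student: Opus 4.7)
The plan is to adapt Section \ref{sec:3tors} to $p = 2$. First, compute $w(-\bar\theta^\hyp) \pmod 2$: since $d \equiv 0 \pmod 4$ is in particular even, the factor $1/(1+dx)$ is trivial mod $2$, and
$$w(-\bar\theta^\hyp) \equiv (1+x)^{-5} \equiv \frac{(1+x)^3}{1 + x^8} \pmod 2,$$
so $w_{2k}(-\bar\theta^\hyp) = x^k$ precisely when $k \equiv 0, 1, 2, 3 \pmod 8$; crucially $w_8 = 0$. Wu's formula $\mathrm{Sq}^i u = w_i \cdot u$ together with the Cartan formula determines the $\mathcal{A}$-module structure of $H^*(MT\bar\theta^\hyp; \bF_2) = \bF_2\{u \cdot x^k : k \geq 0\}$. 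Since $\mathrm{Sq}^8 u = 0$, the module $M$ has a second $\mathcal{A}$-generator $u\cdot x^4$ in degree $8$. I would then write down a minimal resolution in low degrees, compute the Adams $E_2$-page for $t - s \leq 8$, and read off both $\pi_7^s(MT\bar\theta^\hyp)_{(2)}$ and a surjection $p_2$ onto $\bZ/2$.

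The type (ii) classes of Lemma \ref{lem:ClassTypes} are handled exactly as at $p = 3$: $\int_{X_d} x^3 = d \equiv 0 \pmod 2$ forces $[X_d, \ell^\hyp_{X_d}] \in \pi_6^s(MT\bar\theta^\hyp)$ to have $\bF_2$-Adams filtration $\geq 1$, and $G_*$ has filtration $\geq 1$ because $H^*(MT\bar\theta^\hyp; \bF_2)$ is concentrated in even degrees while $G_*$ shifts degree by $1$; so the composition lands in filtration $\geq 2$.

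The main obstacle, and the real point of divergence from the $p = 3$ case, is the type (i) contribution. These classes map to a multiple of $\iota_*(\sigma) \in \pi_7^s(MT\bar\theta^\hyp)$; at $p = 3$ one used that $\sigma$ has Adams filtration $\geq 2$, but at $p = 2$ it has filtration exactly $1$, detected by $h_3 = [\mathrm{Sq}^8]$. A direct computation---pulling back the $h_3$-extension along the augmentation $M \to \bF_2$ and observing that, while $\mathrm{Sq}^8 u = 0$ in $M$, the generator of $C^8$ in the Moore spectrum $C$ for $\sigma$ is forced to be hit---shows that $\iota_*(h_3)$ is the nonzero class $[\mathrm{Sq}^8 \alpha_0] \in \mathrm{Ext}^{1, 8}_{\mathcal{A}}(M, \bF_2)$, so $\iota_*(\sigma)$ retains filtration exactly $1$ in $MT\bar\theta^\hyp$ and is \emph{not} killed by the naive ``quotient by filtration $\geq 2$'' construction. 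The required surjection $p_2$ must therefore come from a finer feature of the $E_\infty$-page: I expect to identify a $\bZ/2$ summand in total degree $7$ arising from higher-filtration contributions fed by the degree-$8$ generator $u \cdot x^4$ (or by secondary operations of similar origin) and to verify that this summand is inaccessible both to $\iota_*(\sigma)$ and, with the additional constraint of filtration $\geq 2$, to the type (ii) image. Executing this---pinpointing $p_2$ precisely on the Adams $E_\infty$-page and checking both vanishings---is the calculational heart of the case $p = 2$; the geometric inputs are as at $p = 3$ and introduce no new difficulty.
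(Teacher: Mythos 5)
You have the right setup (the mod-2 Steenrod action on $u\cdot H^*(\bC\bP^\infty;\bF_2)$, with $\Sq^8 u=0$ and a second $\mathcal{A}$-generator $u\cdot x^4$ in degree $8$, matches the paper), and you have correctly diagnosed the obstruction to copying the $p=3$ argument: $\sigma$ has $\bF_2$-Adams filtration exactly $1$, so ``quotient by filtration $\geq 2$'' cannot work. But the proposal stops exactly where the proposition's content begins: $p_2$ is never constructed and neither vanishing is verified, only conjectured (``I expect to identify a $\bZ/2$ summand\dots''). Two concrete gaps. First, the paper's $p_2$ is not defined by Adams filtration at all; it is the quotient of $\pi_7^s(MT\bar{\theta}^\hyp)_{(2)}$ by the subgroup generated by $\iota\circ\sigma$. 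To know this quotient is $\bZ/2$ one must know the group itself, namely $\bZ/2\oplus\bZ/16$ for $d\equiv 0\bmod 8$ and $\bZ/2\oplus\bZ/8$ for $d\equiv 4\bmod 8$, with the cyclic factor generated by $\iota\circ\sigma$. A minimal resolution gives you the $E_2$-page, but the $E_2$-page does not determine this: there is a $d_2$ out of the $6$-stem and a $d_4$ out of the $8$-stem whose behaviour depends on $d$ mod $8$, and the paper resolves these by an \emph{independent} computation --- identifying the $9$-skeleton of $MT\bar{\theta}^\hyp$ $2$-locally with a stunted projective space via James periodicity (using Adams--Walker's computation of $J(\bC\bP^4)$) and quoting Matsunaga and Mosher for $\pi_5,\pi_7,\pi_8$ of stunted projective spaces. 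Your plan contains no substitute for this input.

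Second, your filtration bound for the type (ii) classes is too weak even granting the existence of the $\bZ/2$ summand. That summand is detected in Adams filtration $2$ (the class at position $(7,2)$ not lying on the $\sigma$-tower), so knowing the type (ii) image has filtration $\geq 2$ does not show it dies under $p_2$. One needs filtration $\geq 3$: the extra unit comes from the fact that the $d_2$ differential kills $E_2^{1,7}$ in the $6$-stem, so \emph{every} element of $\pi_6^s(MT\bar{\theta}^\hyp)_{(2)}$ --- in particular $[X_d,\ell^\hyp_{X_d}]$ --- has filtration $\geq 2$, not merely $\geq 1$ as your Hurewicz argument gives; combined with $G_*$ of filtration $\geq 1$ this places the type (ii) image in $F^3\pi_7=\langle 4\,\iota\circ\sigma\rangle$, which visibly dies under the quotient by $\langle\iota\circ\sigma\rangle$. (Once $p_2$ is defined this way, the type (i) classes, being multiples of $\iota\circ\sigma$, vanish for free --- so the division of labour is the opposite of what your proposal anticipates.)
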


To understand the 2-local homotopy groups of $MT\bar{\theta}^\hyp$, we can identify the homotopy type of its 9-skeleton with a certain stunted complex projective space.

\begin{lemma}\label{lem:JamesPeriodicity}
If $d \equiv 0 \mod 8$ then there is a 2-local equivalence
$$Th(\mathcal{O}(d) - 5 \mathcal{O}(1)+4 \to \bC\bP^4) \simeq \Sigma^{-2(2^6-5)}\bC\bP_{2^6-5}^{2^6-1}.$$
If $d \equiv 4 \mod 8$ then there is a 2-local equivalence
$$Th(\mathcal{O}(d) - 5 \mathcal{O}(1)+4 \to \bC\bP^4) \simeq \Sigma^{-2(2^5-5))}\bC\bP_{2^5-5}^{2^5-1}.$$
\end{lemma}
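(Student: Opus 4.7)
The plan is to translate the claimed equivalence into a congruence in the $2$-local $J$-group $\tilde{J}(\bC\bP^4)_{(2)}$, reduce the dependence on $d$ using the Adams conjecture, and verify the resulting finite list of congruences by an explicit $2$-local computation in $\tilde{K}^0(\bC\bP^4)$.

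First I would identify the target spectra as Thom spectra. The linear embedding $\bC\bP^4 \hookrightarrow \bC\bP^{m+4}$, with complement $\bC\bP^{m-1}$ and normal bundle $m\mathcal{O}(1)$, gives via Pontrjagin--Thom a stable equivalence $\bC\bP^{m+4}_m \simeq \mathrm{Th}(m\mathcal{O}(1) \to \bC\bP^4)$, and hence
$$\Sigma^{-2m}\bC\bP^{m+4}_m \simeq \mathrm{Th}(m x \to \bC\bP^4), \quad x := [\mathcal{O}(1)] - 1 \in \tilde{K}^0(\bC\bP^4).$$
Writing $V_d = (1+x)^d - 1 - 5x$ for the class of $\mathcal{O}(d) - 5\mathcal{O}(1) + 4$, and using that two Thom spectra of virtual rank-$0$ bundles over a finite complex are $2$-locally equivalent precisely when the underlying bundles agree in $\tilde{J}(-)_{(2)}$, the lemma reduces to proving $V_d \equiv (2^6 - 5)\,x$ in $\tilde{J}(\bC\bP^4)_{(2)}$ when $d \equiv 0 \bmod 8$, and $V_d \equiv (2^5 - 5)\,x$ when $d \equiv 4 \bmod 8$.

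Next I would strip off the odd part of $d$ via the Adams conjecture. Writing $d = 2^e d'$ with $d'$ odd, the identity $\psi^{d'}((1+x)^{2^e}) = (1+x)^d$ combined with the Adams conjecture (which at $p = 2$ gives $(\psi^{d'} - 1)(\alpha) \in \ker J_{(2)}$ for every $\alpha$ and every odd $d'$) yields $(1+x)^d - 1 \equiv (1+x)^{2^e} - 1$ in $\tilde{J}(\bC\bP^4)_{(2)}$. So the remaining task is to verify $(1+x)^{2^e} - 1 \equiv 2^6\, x$ for every $e \geq 3$ and $(1+x)^{4} - 1 \equiv 2^5\, x$, both in $\tilde{J}(\bC\bP^4)_{(2)}$.

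Finally I would carry out the verification by a direct calculation in $\tilde{K}^0(\bC\bP^4)_{(2)} \cong \bZ_{(2)}\{x, x^2, x^3, x^4\}$. Applying the Adams conjecture to $(\psi^k - 1)(x^j)$ for odd $k$ (e.g.\ $k \in \{3, 5\}$) and $j \in \{1,2,3,4\}$ produces an explicit generating set for $\ker J_{(2)}$, and a bounded linear-algebra manipulation verifies both congruences directly; the key numerical output is that the order of $x$ in $\tilde{K}^0(\bC\bP^4)_{(2)}/\ker J_{(2)}$ is exactly $2^6$, which is precisely what forces the constants $2^6 - 5$ and $2^5 - 5$ in the statement. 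The passage from $e = 3$ to general $e \geq 3$ then follows by iterating the telescoping identity
$$(1+x)^{2^{e+1}} - (1+x)^{2^e} = (1+x)^{2^e}\cdot\big((1+x)^{2^e} - 1\big),$$
whose coefficients have increasing $2$-adic valuation and hence lie in $\ker J_{(2)}$ by the boundedness of the orders of $x, x^2, x^3, x^4$ in the quotient. The hard part of the argument is this final step: extracting enough of the $2$-local structure of $\tilde{J}(\bC\bP^4)$ to execute the computation cleanly, in analogy with the $J$-theoretic calculation of \cite[\S 5]{grwabelian}.
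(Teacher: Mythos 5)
Your reduction of the lemma to a congruence in the $2$-local $J$-group is the same first move as the paper's, and your use of the (easy) Adams conjecture for odd $k$ to generate relations is also the same; the genuine difference is that you carry out the entire computation in complex $K$-theory $\widetilde{K}^0(\bC\bP^4)_{(2)} \cong \bZ_{(2)}\{x,x^2,x^3,x^4\}$ modulo the subgroup generated by the classes $(\psi^k_\bC-1)(x^j)$, whereas the paper works in $\widetilde{KO}^0(\bC\bP^4) = \bZ[y]/(y^3)$ with the \emph{real} Adams operations. The real group is the right home for the question: fibre homotopy equivalence of sphere bundles (hence stable equivalence of Thom spectra) is governed by the image of $\widetilde{KO}^0$ in $J$, and your complex relation subgroup only maps \emph{into} the real one under realification. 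So your method is a priori only sufficient, not guaranteed to terminate: the needed congruences could in principle fail modulo the complex relations while holding in $J_{(2)}$. They do in fact hold here --- I checked that with the relations from $k=-1,3,5$ one gets $8x^4\sim 0$, $2x^3 \sim 3x^4$, $4x^2 \sim -x^4$, the order of $x$ equal to $2^6$, and solvability of both target congruences --- so your approach works, but it is a four-generator linear-algebra problem where the paper's is a two-generator one (the paper extracts $2^4y^2\sim 0$ and $2^6 y \sim 0$ from $(\psi^3_\bR-1)$ alone and then just substitutes). The trade-off is that your route avoids having to compute $c$, $r$ and the real Adams operations on $\bC\bP^4$, which is where the paper spends most of its effort.

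Two caveats. First, only the ``if'' direction of your opening equivalence (``agree in $\widetilde{J}_{(2)}$ $\Rightarrow$ Thom spectra $2$-locally equivalent'') is true and needed; the ``precisely when'' is false in general and you should not assert it. Second, your inductive step for general $e\geq 3$ is imprecise as stated: the telescoping difference $(1+x)^{2^{e+1}}-(1+x)^{2^e} = u_e + u_e^2$ with $u_e = (1+x)^{2^e}-1$ has linear coefficient $2^e$, and $2^e x$ is \emph{not} killed by ``boundedness of orders'' for $e=3,4,5$ since $x$ has order $2^6$ in the quotient. The induction still goes through, but you must use the inductive hypothesis $u_e\equiv 0$ (starting from the directly verified base case $u_3\in T$) to dispose of the linear term, reserving the valuation argument for the quadratic term $u_e^2$, whose coefficients do have $2$-adic valuation at least $2e-4+v_2(\text{coefficient of }x^j\text{ order})$-compatible bounds.
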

\begin{proof}
We claim that the 2-local spherical fibrations for the complex bundles $\mathcal{O}(d) - 5 \mathcal{O}(1)+4$ and $(2^6-5)(\mathcal{O}(1)-1)$ or $(2^5-5)(\mathcal{O}(1)-1)$ are equivalent: then the 2-local Thom spaces are equivalent, and it is standard that $Th(k\mathcal{O}(1) \to \bC\bP^n) \simeq \bC\bP_k^{n+k}$. This is a kind of James periodicity, and comes down to showing these bundles agree under the 2-local $J$-homomorphism
$$J_{(2)} : KO^0(\bC\bP^4) \lra J(\bC\bP^4)_{(2)}.$$
The latter groups have been calculated by Adams and Walker \cite{AdamsWalker}, and we explain how to extract the specific information that we want.

Writing $x := [\mathcal{O}(1)]-1 \in \widetilde{K}^0(\bC\bP^4)$ and $y := r(x) \in \widetilde{KO}^0(\bC\bP^4)$, by \cite[Section 2]{AdamsWalker} we have $K^0(\bC\bP^4) = \bZ[x]/(x^5)$ and ${KO}^0(\bC\bP^4) = \bZ[y]/(y^3)$. The complexification map $c : KO^0(\bC\bP^4) \to K^0(\bC\bP^4)$ is a ring homomorphism and commutes with Adams operations. It satisfies 
\begin{align*}
c(y) &= cr(x) = x + \psi^{-1}_\bC(x)\\
&= x + (1+x)^{-1} - 1 = \frac{x^2}{1+x}\\
&= x^2 - x^3 + x^4
\end{align*}
and so $c(y^2) = x^4$, and hence $c$ is injective. From this one can check that $r(x^2) = 2y + y^2$, so $r : K^0(\bC\bP^4) \to KO^0(\bC\bP^4)$ is surjective. As $K^0(\bC\bP^4)$ is spanned by sums of complex line bundles, it follows that ${KO}^0(\bC\bP^4)$ is spanned by sums of $\OO(2)$-bundles, and so an easy version of the Adams conjecture \cite[Theorem 1.3]{AdamsJX1} applies to show that for any odd $k$ and any $z \in KO^0(\bC\bP^4)$ the class $(\psi^k_\bR - 1)z$ is in the kernel of $J_{(2)}$. Let us write $a \sim b$ to mean that $a-b \in KO^0(\mathbb{CP}^4)$ lies in the kernel of $J_{(2)}$.

As 
$$\psi_\bC^k(x) = (1+x)^k - 1 = kx + \binom{k}{2} x^2 + \binom{k}{3} x^3 + \binom{k}{4} x^4$$
and $c$ is injective and commutes with Adams operations, we have
\begin{align*}
c(\psi_\bR^k(y)) &= \psi^k_\bC(x^2 -x^3+x^4)\\
&= k^2 x^2 - k^2 x^3 + \tfrac{1}{12} k^2 (k^2 + 11) x^4\\
&= c(k^2y + \tfrac{1}{12} k^2 (k^2 -1)y^2)
\end{align*}
giving $\psi_\bR^k(y) = k^2y + \tfrac{1}{12} k^2 (k^2 -1)y^2$, and hence $\psi_\bR^k(y^2) = k^4 y^2$. In particular we have
\begin{equation*}
(\psi_\bR^3-1)(y) = 2^3 y + 2 \cdot 3y^2 \quad\quad (\psi_\bR^3-1)(y^2) = 2^4 \cdot 5 y^2.
\end{equation*}
As the target of $J_{(2)}$ is 2-local, it follows that $2^4 y^2 \sim 0$, and hence that $2^6 y \sim 0$.

By \cite[Lemma A.2]{AdamsWalker} realification commutes with Adams operations, so writing $d=2^s \cdot t$ with $t$ odd and $s \geq 2$, we have
\begin{align*}
r(\mathcal{O}(d)-1) &= r(\psi^d_\bC(\mathcal{O}(1)-1)) = \psi_\bR^d(y)\\
&= \psi_\bR^{t} (\psi_\bR^{2^s}(y)) \\
&\sim \psi_\bR^{2^s}(y) =  2^{2s} y +  2^{2s-2} \tfrac{(2^{2s} -1)}{3}y^2
\end{align*}
Suppose first that $s \geq 3$. Then $2^{2s-2}y^2 \sim 0$, and $2^{2s} y \sim 0$, so $r(\mathcal{O}(d)-1) \sim 0 \sim 2^6 y$, and so $r(\mathcal{O}(d) - 5 \mathcal{O}(1)+4) \sim r((2^6-5)(\mathcal{O}(1)-1))$ as required.

If $s=2$ then $\tfrac{2^{2s}-1}{3} = 5$ so the above is
\begin{align*}
2^{4} y +  2^{2} \cdot 5 y^2 &= 2^5 y - 2^4 y +  2^{2} \cdot 5 y^2\\
 &\sim 2^5 y - 2(-2 \cdot 3 y^2)  +  2^{2} \cdot 5 y^2\\
 &= 2^5 y + 2^5 y^2 \sim 2^5 y
\end{align*}
and so $r(\mathcal{O}(d) - 5 \mathcal{O}(1)+4) \sim r((2^5-5)(\mathcal{O}(1)-1))$ as required.
\end{proof}

\begin{corollary}\label{cor:2LocHtyGps}
If $d \equiv 0 \mod 4$ then
\begin{align*}
\pi_5(MT\bar{\theta}^\hyp)_{(2)} &\cong 
\bZ/4\\
\pi_7(MT\bar{\theta}^\hyp)_{(2)} &\cong \begin{cases}
\bZ/2 \oplus \bZ/16 & d \equiv 0 \mod 8\\
\bZ/2 \oplus \bZ/8 & d \equiv 4 \mod 8.
\end{cases}\\
\pi_8(MT\bar{\theta}^\hyp)_{(2)} &\cong \bZ_{(2)} \oplus \bZ/4.
\end{align*}
\end{corollary}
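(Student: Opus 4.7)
The plan is to reduce, via James periodicity, to a computation about stunted complex projective spaces, and then to run the Adams spectral sequence at $p=2$. The $9$-skeleton $Y$ of $MT\bar\theta^\hyp$ is the Thom spectrum of $\mathcal{O}(d)-5\mathcal{O}(1)+\underline{\bC}^4$ over $\bC\bP^4$; the cofibre of its inclusion into $MT\bar\theta^\hyp$ is $9$-connected, so $\pi_k(Y)\to\pi_k(MT\bar\theta^\hyp)$ is an isomorphism for $k\leq 8$. By Lemma \ref{lem:JamesPeriodicity}, $Y_{(2)}\simeq\Sigma^{-2n}\bC\bP_n^{n+4}$ with $n=2^6-5$ if $d\equiv 0\pmod 8$ and $n=2^5-5$ if $d\equiv 4\pmod 8$, so the task reduces to computing three $2$-local stable homotopy groups of each of these two stunted projective spaces.

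For this I would use the Adams spectral sequence at $p=2$. Since $d$ is even the mod-$2$ total Stiefel--Whitney class of the Thomified virtual bundle is $(1+x)^{-5}=1+x+x^2+x^3+x^8+\cdots$, so in $H^*(Y;\bF_2)$ the Steenrod action on the Thom class $u$ is given by $\Sq^{2i}(u)=ux^i$ for $i=1,2,3$, and by the Cartan formula $\Sq^2(ux^2)=ux^3$, with all other Steenrod operations between the five cells $u,ux,ux^2,ux^3,ux^4$ vanishing in the relevant range. Crucially this $\mathcal{A}_2$-module structure is identical in the two cases $d\equiv 0,4\pmod 8$. I would then compute $\Ext_{\mathcal{A}_2}$ in the bidegrees of interest (either by hand using a minimal resolution or on a computer), bound the $d_2$-differentials by comparison with the sphere spectrum, and read off the associated graded of the homotopy groups. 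The $\bZ_{(2)}$-summand in $\pi_8$ and one $\bZ/2$-summand in $\pi_7$ are visible immediately from the top cell $ux^4$, which is Steenrod-disconnected from the rest.

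The main obstacle is the hidden additive extension giving $\bZ/16$ versus $\bZ/8$ in $\pi_7$. Since the $\mathcal{A}_2$-module of the $9$-skeleton does not depend on $d\pmod 8$, this distinction cannot be seen on the $E_\infty$-page and must therefore arise from a hidden $2$-extension in Adams filtration; it is controlled by the attaching map of the next ($10$-dimensional) cell of $\bC\bP_n^{n+5}$, which does distinguish $n=2^6-5$ from $n=2^5-5$---this is precisely the content of James periodicity at the next cell. Extracting this, either via a Toda bracket computation in the spirit of the $\sigma$-family or by a direct analysis in the $11$-skeleton of $MT\bar\theta^\hyp$, resolves the extension. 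The remaining groups $\pi_5$ and $\pi_8$ are insensitive to this distinction and can be read off directly from the $E_\infty$-page.
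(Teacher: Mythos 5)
Your first step---restricting to the $9$-skeleton $Y$ and invoking Lemma \ref{lem:JamesPeriodicity} to replace it $2$-locally by a desuspension of a stunted projective space $\bC\bP_n^{n+4}$ with $n=2^6-5$ or $n=2^5-5$---is exactly how the paper begins. After that the routes diverge: the paper does not prove this corollary with the Adams spectral sequence at all, but identifies the groups with $\pi^s_{2n+k}(\bC\bP_n^\infty)_{(2)}$ for $k=5,7,8$ and quotes the literature (Matsunaga's computations of the metastable homotopy of $\U(n)$ via Toda's isomorphism $\pi_{2n+i}(\U(n)) \cong \pi_i^s(\bC\bP_n^\infty)$ for $\pi_5$ and $\pi_7$, and Mosher's tables for $\pi_8$). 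The Adams chart of Figure \ref{fig:p2deven} is then deduced \emph{from} the corollary, not the other way around.

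Running the argument in your direction leaves a genuine gap, and you have also misplaced where the dependence on $d \bmod 8$ lives. As you yourself observe, $\pi_7(Y) \to \pi_7(MT\bar{\theta}^\hyp)$ is an isomorphism, so the distinction between $\bZ/16$ and $\bZ/8$ must already be present in the five-cell complex $\bC\bP_n^{n+4}$: it cannot be ``controlled by the attaching map of the next ($10$-dimensional) cell'', which only influences stems $\geq 9$. It is governed by the attaching map of the \emph{top ($8$-dimensional) cell of $Y$}, and in the spectral sequence it appears as the presence or absence of the differential $d_4 : E_4^{0,8} \to E_4^{4,11}$ off the class $u\cdot x^4$, truncating the $h_0$-tower in the $7$-stem from length $4$ to length $3$. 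In particular the two $E_\infty$-pages have $7$-stems of different orders ($32$ versus $16$), so this is a differential and cannot be a hidden extension, contrary to your claim; what is true is merely that the differential is not forced by the $E_2$-page as an algebraic object, since the $\mathcal{A}$-module structure is independent of $d \bmod 8$. Determining that attaching map (equivalently, that $d_4$) is precisely the hard input your proposal defers to ``a Toda bracket computation or a direct analysis'', and it is exactly the $2$-adic information that distinguishes $n=2^6-5$ from $n=2^5-5$; this is the point at which the paper leans on Matsunaga. (A smaller quibble: the $\bZ/2$ summand of $\pi_7$ is not ``visible from the top cell''---the split summand $\bF_2\{u\cdot x^4\}$ contributes $\pi_{-1}^s=0$ to the $7$-stem---it comes from the four-cell piece, detected at filtration $2$ as an $h_1$-multiple.) Until the differential is pinned down, your argument yields $\pi_5$, $\pi_8$, and the $\bZ/2$ summand of $\pi_7$, but not the order of the cyclic $2$-group in $\pi_7$.
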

\begin{proof}
We have $MT\bar{\theta}^\hyp = Th(\mathcal{O}(d) - 5 \mathcal{O}(1) + 4 \to \bC\bP^\infty)$, and by Lemma \ref{lem:JamesPeriodicity} its 9-skeleton is 2-locally equivalent to either $\Sigma^{-2(2^6-5)}\bC\bP_{2^6-5}^{2^6-1}$ or $\Sigma^{-2(2^6-5)}\bC\bP_{2^6-5}^{2^6-1}$, which in turn are the 9-skeletons of $\Sigma^{-2(2^6-5)}\bC\bP_{2^6-5}^{\infty}$ or $\Sigma^{-2(2^6-5)}\bC\bP_{2^6-5}^{\infty}$. 

The first claim follows from \cite[Theorem 1 c)]{Matsunaga1}, using the discussion in \S 9 of that paper and Toda's identification $\pi_{2n+i}(U(n)) \cong \pi_i^s(\bC\bP_{n}^\infty)$ for odd $i$ in the metastable range. The second claim follows from \cite[Theorem 2]{Matsunaga2}. The third claim follows from \cite[Table 2.2]{MosherStunted}.
\end{proof}

On the other hand, we could also approach the homotopy groups of $MT\bar{\theta}^\hyp$ via the Adams spectral sequence. As modules over the Steenrod algebra we have $H^*(MT\bar{\theta}^\hyp;\bF_2) \cong u \cdot H^*(\bC\bP^\infty;\bF_2)$ where $H^*(\bC\bP^\infty;\bF_2)$ carries its usual Steenrod-module structure, and $\Sq(u) = u \cdot w(-\theta^{\hyp})$, which as $d$ is even is 
$$\Sq(u) = u \cdot (1 + x + x^2 + x^3 + O(x^5)).$$
In degrees $\leq 9$ we then have that $H^*(MT\bar{\theta}^\hyp;\bF_2)$ is a sum of modules over the Steenrod algebra $\bF_2\{u, u \cdot x, u \cdot x^2, u \cdot x^3\} \oplus \bF_2\{u \cdot x^4\}$ with generating Steenrod operations
$$\Sq^2(u) = u \cdot x, \quad \Sq^4(u) = u \cdot x^2, \quad \Sq^2(u \cdot x^2) = u \cdot x^3.$$
The $E_2$-page of the Adams spectral sequence near the degrees in which we are interested in then as in Figure \ref{fig:p2deven}.

\begin{figure}[h]
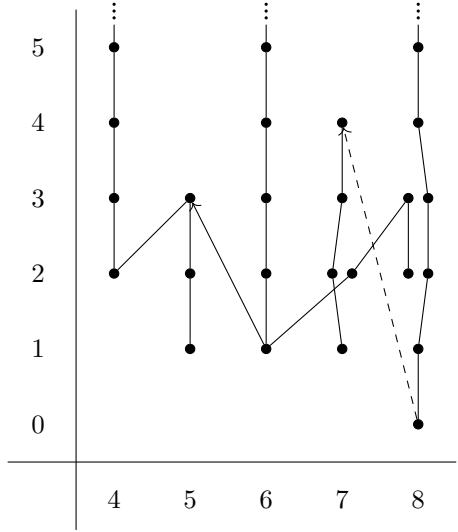


\begin{sseqdata}[ name = p2deven, Adams grading, classes=fill, x range = {4}{8}, y range = {0}{5}]

\class(4,2)
\class(4,3)
\structline(4,2)(4,3)
\class(4,4)
\structline(4,3)(4,4)
\class(4,5)
\structline(4,4)(4,5)
\class(4,6)
\structline(4,5)(4,6)

\class(5,1)
\class(5,2)
\structline(5,1)(5,2)
\class(5,3)
\structline(5,2)(5,3)
\structline(4,2)(5,3)

\class(6,1)
\class(6,2)
\structline(6,1)(6,2)
\class(6,3)
\structline(6,2)(6,3)
\class(6,4)
\structline(6,3)(6,4)
\class(6,5)
\structline(6,4)(6,5)
\class(6,6)
\structline(6,5)(6,6)

\class(7,1)
\class(7,2)
\class(7,2)
\structline(7,1)(7,2,1)
\structline(6,1)(7,2,2)
\class(7,3)
\structline(7,2,1)(7,3)
\class(7,4)
\structline(7,3)(7,4)

\class(8,0)
\class(8,1)
\structline(8,0)(8,1)
\class(8,2)
\class(8,2)
\structline(8,1)(8,2,2)
\class(8,3)
\class(8,3)
\structline(8,2,1)(8,3,1)
\structline(8,2,2)(8,3,2)
\structline(7,2,2)(8,3,1)
\class(8,4)
\structline(8,3,2)(8,4)
\class(8,5)
\structline(8,4)(8,5)
\class(8,6)
\structline(8,5)(8,6)

\d2(6,1)(5,3)
\d[dashed]4(8,0)(7,4)
\end{sseqdata}
\printpage[ name = p2deven, page = 2--4 ]

\caption{$E_2$-page of the Adams spectral sequence at $p=2$ for $MT\bar{\theta}^\hyp$, with $d$ even. The class of filtration 1 in degree 7 detects the image of $\sigma \in \pi_7^s(S^0)$ on the bottom cell $\iota : S^0 \to MT\bar{\theta}^\hyp$.}\label{fig:p2deven}
\end{figure}

From the homotopy groups shown in Corollary \ref{cor:2LocHtyGps}, we see that there is a unique pattern of differentials in this range:  $d_2 : E_2^{1,7} \to E_2^{3,8}$ is an isomorphism, and $d_4 : E_4^{0,8} \to E_4^{4,11}$ is an isomorphism if and only if $d \equiv 4 \mod 8$, with all other differentials being zero. In particular, taking the quotient by the subgroup generated by $\iota \circ \sigma \in \pi_7^s(MT\bar{\theta}^\hyp)_{(2)}$ gives a surjection
$$p_2 : \pi_7^s(MT\bar{\theta}^\hyp)_{(2)} \lra \bZ/2.$$

We wish to show that the composition $KO^{-2}(X_d)_{(2)} \to \pi_1^s(MT\bar{\theta}^\hyp)_{(2)} \overset{p_2}\to \bZ/2$ is trivial. We do this just as in the 3-torsion case, using Lemma \ref{lem:ClassTypes}. Namely, classes of type (i) coming from $c : X_d \to S^6$ map to a multiple of $\iota \circ \sigma \in \pi_7^s(MT\bar{\theta}^\hyp)$ so vanishes under $p_2$. As in the 3-torsion case, for classes of type (ii) their images may be represented as
$$S^1 \wedge S^6 \overset{S^1 \wedge [X_d,\ell^\hyp_{X_d}]}\lra S^1 \wedge MT\bar{\theta}^\hyp \overset{G_*}\lra MT\bar{\theta}^\hyp$$
for certain maps $G_*$. Figure \ref{fig:p2deven} shows the Adams chart for $MT\bar{\theta}^\hyp$, and because of the differential out of the 6-column, the class $[X_d,\ell^\hyp_{X_d}] \in \pi_6^s(MT\bar{\theta}^\hyp) $ has $\bF_2$-Adams filtration $\geq 2$. The map $G_*$ has $\bF_2$-Adams filtration $\geq 1$, so the classes obtained in this way all have $\bF_2$-Adams filtration $\geq 3$. Thus they lie in the subgroup of $\pi_7^s(MT\bar{\theta}^\hyp)_{(2)}$ generated by $4 \iota \circ \sigma$, and so vanish under $p_2$. 

\subsection{Proof of Theorem \ref{thm:6.1}}

Firstly, abbreviate $\mathrm{Aut} := \mathrm{Aut}(\pi_3(X_d), \lambda, \mu)$ and consider the Serre spectral sequence for \eqref{eq:StabExt}, which gives
\begin{equation}\label{eq:SS}
H_2(\mathrm{Aut};\bZ) \overset{d_2}\lra \mathrm{K}_d \lra H_1(\Stab_{\MCG_d}(\ell^{\hyp}_{X_d});\bZ) \lra H_1(\mathrm{Aut};\bZ) \lra 0.
\end{equation}

The group $\mathrm{Aut}$ participates in an extension
$$1 \lra H^3(X_d ; 2 \cdot \bZ/d) \lra \mathrm{Aut} \overset{\rho} \lra \begin{cases}
\mathrm{Aut}(H_3(X_d;\bZ), \lambda, \mu) & d \text{ odd}\\
\mathrm{Aut}(H_3(X_d;\bZ), \lambda) & d \text{ even}
\end{cases} \lra 1,$$
where the (outer) action of the quotient on the kernel is the usual one. Writing $\mathrm{Aut}'$ for the right-hand term, the Serre spectral sequence for this extension gives
$$H_0(\mathrm{Aut}' ; H^3(X_d ; 2 \cdot \bZ/d)) \lra H_1(\mathrm{Aut};\bZ) \lra H_1(\mathrm{Aut}' ; \bZ) \lra 1.$$
Using that $d \geq 3$ and so $g \geq 5$, the right-hand term is $\bZ/4$ if $d$ is odd and $0$ if $d$ is even by e.g.\ \cite[Section 4.1.1]{KR-WFram}. Similarly, if $d$ is even, or $d$ is odd and $\mu$ has Arf invariant 0, then the left-hand term vanishes by \cite[Lemma A.2]{KrannichMCG}. If $d$ is odd and $\mu$ has Arf invariant 1 then it still vanishes, though we do not know a specific reference: a similar argument to \cite[Lemma A.2]{KrannichMCG} works. The overall conclusion is that $H_1(\mathrm{Aut};\bZ)$ is $\bZ/4$ if $d$ is odd and $0$ if $d$ is even.

By the discussion in Sections \ref{sec:CalcAb}, \ref{sec:3tors}, and \ref{sec:2tors}, for $d \geq 3$ we have the diagram
\begin{equation*}
\begin{tikzcd}
KO^{-2}(X_d) \rar \arrow[rdd] & H_1(\MCG_d^\hyp;\bZ) \arrow[d, "\cong"] \rar & H_1(\Stab_{\MCG_d}(\ell^{\hyp}_{X_d});\bZ) \rar \arrow[dd, dashed, two heads, "\kappa"] &0\\
 & \pi_1^s(MT{\theta}^\hyp(6)) \arrow[d, two heads]\\
 & \pi_7^s(MT\bar{\theta}^\hyp) \arrow[r, two heads, "p_2 \oplus p_3"] & \mathrm{Coker}(\Phi)'
\end{tikzcd}
\end{equation*}
where the top row is exact and the lower composition is trivial, so the dashed surjection $\kappa$ exists. Here we have written $\mathrm{Coker}(\Phi)'$ for the abstract group
$$\begin{cases}
\bZ/2 & d \equiv 0 \mod 4\\
0 & d \not\equiv 0 \mod 4
\end{cases} \oplus \begin{cases}
\bZ/3 & d \equiv 0 \mod 3\\
0 & d \not\equiv 0 \mod 3.
\end{cases}$$

\begin{lemma}\label{lem:kappaZeroOnAlpha}
The composition 
$$\Mon_d \overset{\alpha}\lra \Stab_{\MCG_d}(\ell^{\hyp}_{X_d}) \lra H_1(\Stab_{\MCG_d}(\ell^{\hyp}_{X_d});\bZ) \overset{\kappa}\lra \mathrm{Coker}(\Phi)'$$
is trivial.
\end{lemma}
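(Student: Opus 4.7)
The plan is to recognise $\kappa\circ\alpha(\gamma)$ as a bordism obstruction carried by the mapping torus of $\alpha(\gamma)$, and then kill it using a disc bounding $\gamma$ inside $\bP^N := \bP H^0(\bC\bP^4 ; \mathcal{O}(d))$.

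\textbf{Step 1: Geometric identification of $\kappa\circ\alpha$.} Unwinding the diagram defining $\kappa$, the composite $\Mon_d \to H_1(\Stab_{\MCG_d}(\ell^\hyp_{X_d});\bZ) \to \mathrm{Coker}(\Phi)'$ factors as
$$\Mon_d \lra H_1(\MCG_d^\hyp;\bZ) \cong \pi_1^s(MT\theta^\hyp(6)) \twoheadrightarrow \pi_7^s(MT\bar\theta^\hyp) \overset{p_2\oplus p_3}\lra \mathrm{Coker}(\Phi)'.$$
The $\theta^\hyp$-structure $\ell^\hyp_{\mathcal{X}_d}$ on the universal family furnishes a map $\mathcal{U}_d \to B\Diff^{\theta^\hyp}(X_d ; \ell^\hyp_{X_d})$ whose effect on $\pi_1$ is a natural lift of $\alpha$ to $\MCG_d^\hyp$. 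Composing with the parameterised Pontrjagin--Thom map and projecting to $\pi_7^s(MT\bar\theta^\hyp)$, this sends $\gamma$ to the bordism class $[E_\gamma, \ell^\hyp_{E_\gamma}]$ of the $7$-dimensional mapping torus $E_\gamma := \pi^{-1}(\gamma(S^1)) \subset \mathcal{X}_d$ equipped with the $\bar\theta^\hyp$-structure inherited from $\mathcal{X}_d$. So it suffices to prove $[E_\gamma,\ell^\hyp_{E_\gamma}] = 0$ in $\pi_7^s(MT\bar\theta^\hyp)$.

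\textbf{Step 2: Filling $\gamma$ by a Lefschetz slice.} As $\bP^N$ is simply connected, $\gamma$ bounds a smooth disc $\iota \colon D^2 \to \bP^N$. The discriminant $\Delta \subset \bP^N$ is an irreducible hypersurface whose singular locus has complex codimension $\geq 2$, so after a generic perturbation $\iota$ meets $\Delta$ transversely at finitely many smooth points of $\Delta$. The projective total space $\bar{\mathcal{X}}_d \subset \bP^N \times \bC\bP^4$ is the smooth divisor cut out by the universal section of $\mathcal{O}_{\bP^N}(1) \boxtimes \mathcal{O}_{\bC\bP^4}(d)$; the standard Morse model for $\pi \colon \bar{\mathcal{X}}_d \to \bP^N$ near the singular point of a nodal fibre shows that the transversality of $\iota$ with $\Delta$ lifts to transversality of $\pi$ with $\iota$. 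Hence $W := \pi^{-1}(D^2) \subset \bar{\mathcal{X}}_d$ is a smooth compact $8$-manifold with $\partial W = E_\gamma$.

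\textbf{Step 3: Extending the $\bar\theta^\hyp$-structure.} Let $L := \mathcal{O}_{\bC\bP^4}(1)\vert_{\bar{\mathcal{X}}_d}$ and $H := \mathcal{O}_{\bP^N}(1)\vert_{\bar{\mathcal{X}}_d}$. Combining adjunction for the smooth divisor $\bar{\mathcal{X}}_d \subset \bP^N \times \bC\bP^4$ with the Euler sequences on $\bP^N$ and $\bC\bP^4$ gives a stable isomorphism
$$T\bar{\mathcal{X}}_d \oplus (H \otimes L^{\otimes d}) \oplus \underline{\bC}^2 \cong H^{\oplus (N+1)} \oplus L^{\oplus 5}.$$
On $W$, $H\vert_W$ is trivialisable since $D^2$ is contractible, and $N_{W\subset\bar{\mathcal{X}}_d}$ is trivial since it is pulled back from $N_{D^2\subset\bP^N}$. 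So $TW$ agrees stably with $T\bar{\mathcal{X}}_d\vert_W$, and substituting yields a stable isomorphism $TW \oplus L^{\otimes d} \oplus \underline{\bC}^4 \cong L^{\oplus 5}$, i.e.\ a $\bar\theta^\hyp$-structure on $W$. By construction this restricts on $\partial W = E_\gamma$ to the structure used to define $[E_\gamma,\ell^\hyp_{E_\gamma}]$, so the latter class vanishes.

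\textbf{Main obstacle.} The topological heart---filling the loop by a transverse disc and smoothness of the associated Lefschetz total space---is routine classical material. The real care is needed in Step 1, where one must check that the distinguished lift of $\alpha(\gamma)$ to $\MCG_d^\hyp$ arising from the $\theta^\hyp$-structure on $\mathcal{X}_d$ is the lift for which the PT/bordism description computes $\kappa\circ\alpha$. This is a naturality argument: the PT transformation is functorial in families, so the diagram with $\mathcal{U}_d$ commutes, and it reduces the computation of $\kappa\circ\alpha(\gamma)$ to the $\bar\theta^\hyp$-bordism class of the total $7$-manifold $E_\gamma$, as used above.
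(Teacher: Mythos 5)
Your argument is genuinely different from the paper's, and its topological core is sound. The paper first uses irreducibility of the discriminant to reduce to a single Dehn twist $\tau$, then decomposes the mapping torus $T_\tau$ along the plumbing $T^*S^3 \natural T^*S^3$, replaces the complementary piece by $D^2 \times S^5$, and concludes that $[T_\tau,\ell^\hyp_{T_\tau}]$ is $\bar{\theta}^\hyp$-bordant to a $2$-connected (hence stably framed) manifold, so lies in $\mathrm{im}(\iota_*\colon \pi_7^s(S^0)\to\pi_7^s(MT\bar{\theta}^\hyp))$, which is precisely the subgroup killed by $p_2\oplus p_3$. You instead cap off the mapping torus of an \emph{arbitrary} monodromy loop by the Lefschetz slice over a transverse disc in $\bP^N$, which avoids the normal-generation step and, if the structures match up, proves the stronger statement $[E_\gamma,\ell^\hyp_{E_\gamma}]=0$. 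Your Steps 1 and 2 are fine: the identification of $\kappa\circ\alpha$ via mapping tori is exactly how the paper itself uses the Pontrjagin--Thom map, and the transversality and smoothness of $W$ is standard Lefschetz theory.

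The gap is the last sentence of Step 3: ``by construction this restricts on $\partial W = E_\gamma$ to the structure used to define $[E_\gamma,\ell^\hyp_{E_\gamma}]$.'' Your adjunction formula involves $H=\mathcal{O}_{\bP^N}(1)$, and to obtain a $\bar{\theta}^\hyp$-structure (which only sees $L=\mathcal{O}_{\bC\bP^4}(1)$) you must trivialise $H$ over $D^2$. But $\ell^\hyp_{E_\gamma}$ is defined by restricting the fibrewise structure on the universal family, whose construction already requires trivialising $H$ over $\gamma(S^1)$ (indeed $\mathcal{O}_{\bP^N}(1)\vert_{\mathcal{U}_d}$ is a nontrivial torsion line bundle, of order $\deg\Delta$, so one should really work over the affine cone of defining polynomials). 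The two resulting trivialisations of $H$ over $\gamma(S^1)$ differ by an integer $n$ --- the obstruction to extending the chosen lift of $\gamma$ to the cone over your disc --- and changing the trivialisation by $n$ changes the bordism class of $E_\gamma$ by a multiple of $\eta\cdot[X_d,\ell^\hyp_{X_d}]$. This does not sink the lemma, since such classes have Adams filtration $\geq 2$ at both relevant primes and so die under $p_2\oplus p_3$ anyway; but as written the agreement of boundary structures is asserted, not proved, and your stronger conclusion $[E_\gamma,\ell^\hyp_{E_\gamma}]=0$ depends on it. Either carry the affine cone through the whole argument and verify the lifts are compatible, or weaken Step 3 to the statement that $[E_\gamma,\ell^\hyp_{E_\gamma}]$ lies in the subgroup generated by $\eta\cdot[X_d,\ell^\hyp_{X_d}]$, which is still killed by $p_2\oplus p_3$ by the filtration arguments of Sections \ref{sec:3tors} and \ref{sec:2tors}.
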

\begin{proof}
$\mathcal{U}_d \subset \mathbb{P}H^0(\mathbb{CP}^4 ; \mathcal{O}(d))$ is the complement of the discriminant locus. The discriminant locus is irreducible, and a generic point of it is a hypersurfaces which is a smooth except for a single ordinary double point (see proof of \cite[Chapter 1 Theorem 2.2]{Huybrechts}). It follows that its fundamental group $\Mon_d$ is normally generated by a single element, which can be taken to be a symplectic Dehn twist $\tau$ \cite[Chapter 2, \S 1.3]{ArnoldBook}. As $\mathrm{Coker}(\Phi)$ is an abelian group, it is enough to show that this Dehn twist maps to zero. The map $\alpha$ naturally lifts to $\MCG_d^\hyp$, as the universal family of smooth hypersurfaces admits a fibrewise $\theta^\hyp$-structure as discussed in Section \ref{sec:ThetaHyp}.

By choosing a degeneration of $X_d$ to an $A_2$-singularity, we may find an embedding $T^* S^3 \natural T^* S^3 \subset X_d$ of the plumbing such that $\tau$ is the Dehn twist around one of the spheres. There is a diffeomorphism $T^* S^3 \natural T^* S^3 \cong (S^3 \times S^3) \setminus D^6 = \mathrm{int}(W_{1,1})$ and so the mapping torus $T_\tau$ can be expressed as
$$T_\tau = T_{\tau\vert_{W_{1,1}}} \bigcup_{S^1 \times S^5} (S^1 \times (X_d \setminus W_{1,1})),$$
equipped with a $\theta^\hyp$-structure $\ell^\hyp_{T_\tau}$ that agrees with the pullback of $\ell^\hyp_{X_d}$ on the second term. Via $D^2 \times (X_d \setminus W_{1,1})$, the second term is $\theta^\hyp$-cobordant rel boundary to $D^2 \times S^5$, so $T_\tau$ is $\theta^\hyp$-cobordant to the manifold
$$M := T_{\tau\vert_{W_{1,1}}} \bigcup_{S^1 \times S^5} D^2 \times S^5$$
with some $\theta^\hyp$-structure $\ell_M^\hyp : M \to \mathbb{CP}^\infty$. But $M$ is easily checked to be 2-connected, so $\ell^\hyp_M$ may be lifted along $E\OO \to \mathbb{CP}^\infty$ to a stable framing $\ell_M^\sfr$. It follows that $[T_\tau,\ell^\hyp_{T_\tau}] \in \pi_7^s(MT\bar{\theta}^\hyp)$ lies in the image of $\iota_* : \pi_7^s(S^0)  \to \pi_7^s(MT\bar{\theta}^\hyp)$. Both 3-locally and 2-locally the image of this map lies in the subgroup that we divided out to form the map $p_2 \oplus p_3 : \pi_7^s(MT\bar{\theta}^\hyp)_{(2)} \to \mathrm{Coker}(\Phi)'$.
\end{proof}

The argument in the proof of the preceding lemma can be understood in a more general context by the discussion in Section \ref{sec:Wg1}, see Remark \ref{rem:WgAbApplication}.

\begin{lemma}
The map $q: \mathrm{K}_d \to H_1(\Stab_{\MCG_d}(\ell^{\hyp}_{X_d});\bZ) \overset{\kappa}\to \mathrm{Coker}(\Phi)'$ is the quotient by $\mathrm{Im}(\Phi)$.
\end{lemma}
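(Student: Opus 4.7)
The plan is to verify two claims: (a) the map $q$ vanishes on $\mathrm{Im}(\Phi)$, hence descends to a map $\bar{q}\colon \mathrm{Coker}(\Phi) \to \mathrm{Coker}(\Phi)'$; and (b) $\bar{q}$ is an isomorphism. Since by Lemma \ref{lem:WhatIsKd} both sides of (b) are finite abelian groups of the same order, (b) reduces to showing that $q$ is surjective.

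For (a), given $\varphi \in \Theta_7 \cong \pi_0\Diff_\partial(D^6)$, I would identify the image of $\Phi(\varphi) \in \mathrm{K}_d$ in $\pi_7^s(MT\bar{\theta}^\hyp)$ via the parametrised Pontrjagin--Thom construction. Let $\tilde\varphi$ denote the extension of $\varphi$ by the identity outside an embedded $D^6 \subset X_d$; its mapping torus is diffeomorphic to $T_{\tilde\varphi} \cong (S^1 \times X_d) \# \Sigma_\varphi$, where $\Sigma_\varphi \in \Theta_7 = bP_8$ is the homotopy 7-sphere corresponding to $\varphi$ under the Cerf identification. The $\theta^\hyp$-structure on $T_{\tilde\varphi}$ agrees with the product one on $S^1 \times X_d$ outside an embedded 7-disc, and on the connect summand $\Sigma_\varphi$ factors through the bottom cell $S^0 \hookrightarrow MT\bar{\theta}^\hyp$, since $\Sigma_\varphi$ is stably parallelisable. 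Thus the class of $\Phi(\varphi)$ in $\pi_7^s(MT\bar{\theta}^\hyp)$, measured against that of the trivial mapping torus $(S^1 \times X_d, \ell^\hyp)$, lies in the image of $\iota_*\colon \pi_7^s(S^0) \to \pi_7^s(MT\bar{\theta}^\hyp)$. But $p_2$ and $p_3$ as constructed in Sections \ref{sec:2tors} and \ref{sec:3tors} were tailored precisely to annihilate this image: 3-locally, $\sigma \in \pi_7^s(S^0)_{(3)}$ has $\bF_3$-Adams filtration $\geq 2$; 2-locally, we quotiented by $\iota_*\sigma$ by construction.

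For the surjectivity in (b), consider the tail of \eqref{eq:SS},
$$\mathrm{K}_d \longrightarrow H_1(\Stab_{\MCG_d}(\ell^\hyp_{X_d});\bZ) \longrightarrow H_1(\mathrm{Aut};\bZ) \longrightarrow 0.$$
Together with surjectivity of $\kappa$, a short chase shows that $\mathrm{Coker}(q)$ is a quotient of $H_1(\mathrm{Aut};\bZ)$. When $d$ is even this group is $0$, so $q$ is surjective. When $d$ is odd, $H_1(\mathrm{Aut};\bZ) \cong \bZ/4$; but $\mathrm{Coker}(q)$ is also a quotient of $\mathrm{Coker}(\Phi)'$, which is either $0$ or $\bZ/3$, hence of odd order. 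The only group that is both $2$-torsion and of odd order is trivial, so $q$ is surjective in this case as well. Combined with (a) and Lemma \ref{lem:WhatIsKd}, this yields the desired isomorphism $\bar{q}$.

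The main obstacle is the structural identification in (a): verifying that the diffeomorphism decomposition $T_{\tilde\varphi} \cong (S^1 \times X_d) \# \Sigma_\varphi$ is compatible with the $\theta^\hyp$-structure so that the exotic summand contributes only through $\iota_*$. Once that is pinned down, the remainder of the argument is an elementary order-counting exercise.
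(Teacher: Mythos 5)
Your proof is correct. Part (b), the surjectivity, is essentially the paper's argument: it also deduces surjectivity $2$- and $3$-locally from the vanishing (or $3$-local vanishing) of $H_1(\mathrm{Aut};\bZ)$, and your order-counting reformulation via $\mathrm{Coker}(q)$ being a common quotient of $H_1(\mathrm{Aut};\bZ)$ and $\mathrm{Coker}(\Phi)'$ is an equivalent packaging. Part (a) is where you genuinely diverge: the paper proves $q\circ\Phi=0$ by citing Theorem \ref{thm:Krylov} (that $\Phi(\Theta_7)\subseteq\mathrm{Im}(\alpha)$, which rests on Krylov's realisation of the Milnor sphere by Dehn twists) and then applying Lemma \ref{lem:kappaZeroOnAlpha}, whereas you compute directly that the mapping torus of a disc-supported diffeomorphism is $(S^1\times X_d)\#\Sigma_\varphi$ and that the exotic summand contributes only through $\iota_*\colon\pi_7^s(S^0)\to\pi_7^s(MT\bar{\theta}^\hyp)$, which $p_2\oplus p_3$ annihilates by construction. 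This is more self-contained -- it needs no geometric input about monodromy -- and is exactly the mechanism of Remark \ref{rem:WgAbApplication} (and of the proof of Lemma \ref{lem:kappaZeroOnAlpha}, with $D^6$ in place of $W_{1,1}$, where the check is even easier since $T_{\varphi}\simeq S^1$ has vanishing $H^2$, so the structure over the twisted part lifts to a stable framing). The one point you rightly flag as needing care -- that the fibrewise $\theta^{\hyp}$-structure on $T_{\tilde\varphi}$ may be taken to agree with the product structure away from $S^1\times D^6$ -- does go through: one chooses the path of structures from $\ell^{\hyp}_{X_d}\circ D\tilde\varphi$ to $\ell^{\hyp}_{X_d}$ to be constant outside the disc, which is possible because the space of $\theta^{\hyp}$-structures on $D^6$ relative to its boundary is connected ($\pi_6$ of the homotopy fibre of $\bC\bP^\infty\to B\OO$ vanishes). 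What the paper's route buys is economy given that Theorem \ref{thm:Krylov} and Lemma \ref{lem:kappaZeroOnAlpha} are needed elsewhere anyway; what yours buys is independence of Krylov's theorem for this particular step.
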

\begin{proof}
For this it suffices to show that it is 2- and 3-locally surjective, and that it vanishes on precomposing with $\Phi : \Theta_7 \to \mathrm{K}_d$. Working 2-locally there is only anything to show when $d \equiv 0 \mod 4$, in which case $H_1(\mathrm{Aut};\bZ)=0$ and so $\mathrm{K}_d \to H_1(\Stab_{\MCG_d}(\ell^{\hyp}_{X_d});\bZ)$ is surjective, so $q$ is surjective too. Working 3-locally there is only anything to show when $d \equiv 0 \mod 3$. Then $H_1(\mathrm{Aut};\bZ)$ is 0 or $\bZ/4$ so is 3-locally trivial and so $\mathrm{K}_d \to H_1(\Stab_{\MCG_d}(\ell^{\hyp}_{X_d});\bZ)$ is 3-locally surjective, and so $q$ is 3-locally surjective too.

To see that $\Theta_7 \overset{\Phi}\to \mathrm{K}_d \overset{q}\to \mathrm{Coker}(\Phi)'$ is trivial, we use that $\Phi(\Theta_7)$ lies in the image of $\alpha$ by Theorem \ref{thm:Krylov}, then apply Lemma \ref{lem:kappaZeroOnAlpha}.
\end{proof}

\begin{remark}[Complete intersections]\label{rem:CI}
Many of the steps in our argument have more or less obvious analogues for a smooth 3-dimensional complete intersection $X_{d_1, \ldots, d_r} \subset \mathbb{CP}^{3+r}$ residing in the moduli space $\mathcal{U}_{d_1, \ldots, d_r}$ of such. For example, the results of Beauville and of Kreck--Su are still available. But as many of the arguments require careful calculations, it is hard to know whether to expect an analogous answer. 

For the reader interested in pursuing this, we briefly comment on what we perceive to be the difficulties. One should be able to construct a quadratic refinement on $\pi_3(X_{d_1, \ldots, d_r})$ as in Section \ref{sec:SurgeryKernel}, but a new argument will be needed for the analogue of Lemma \ref{lem:QuadDescends} because of the more complicated form of \cite[Théorème 6]{Beauville}. The analogue of Theorem \ref{thm:Krylov} will still hold as $X_{d_1, \ldots, d_r}$ still admits a deformation to an $E_6$-singularity. We do not know what should replace the results of Pham and of Looijenga in Section \ref{sec:AutMonodromy}: this seems like a serious problem. The analogue of Section \ref{sec:FurtherUpperBounds} is unpredictable: if $\mathrm{Coker}(\Phi : \Theta_7 \to \mathrm{K}_{d_1, \ldots, d_r})$ cannot be detected in a similar way, which depends on the results of many delicate calculations, then we expect it will be very hard to decide whether this cokernel can be realised by monodromy.
\end{remark}

\section{Proof of Theorem \ref{thm:B}: Comparison to $W_{g,1}$}\label{sec:Wg1}

In order to prove Theorem \ref{thm:B} we will compare some of our calculations with analogous calculations for mapping class groups of connect-sums of $S^3 \times S^3$'s, which have been studied in some detail, especially recently. Recall from Section \ref{sec:TopOfHyp} that by a theorem of Wall we may find an embedding
$$e : W_{g,1} := (\#^g S^3 \times S^3) \setminus \mathrm{int}(D^6) \lra X_d$$
which induces an isomorphism on $H_3(-;\bZ)$. By our assumption $d \geq 3$ we have $g \geq 5$. Let $\ell^{\hyp}_{W_{g,1}} = e^* \ell^{\hyp}_{X_d}$ be the induced $\theta^{\hyp}$-structure, which in particular induces a $\theta^{\hyp}$-structure $\ell^{\hyp}_{\partial W_{g,1}}$ on the boundary $\partial W_{g,1}$. Recall that the universal principal $\OO$-bundle $\bar{\theta}^{\sfr} : E\OO \to B\OO$ defines a stable tangential structure that we call a \emph{stable framing}.

\begin{lemma}\label{lem:SFReqHYP}
There is a boundary condition on stable framings $\ell^{\sfr}_{\partial W_{g,1}}$ and a $\Diff_\partial(W_{g,1})$-equivariant homotopy equivalence
$$u_* : \Theta^{\sfr}(W_{g,1} ; \ell^{\sfr}_{\partial W_{g,1}}) \overset{\sim}\lra \Theta^{\hyp}(W_{g,1} ; \ell^{\hyp}_{\partial W_{g,1}}).$$
\end{lemma}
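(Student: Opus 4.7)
The plan is to construct $u$ as a lift of $\bar{\theta}^{\sfr}$ through $\bar{\theta}^{\hyp}$, and then show that post-composition with $u$ is an equivalence on spaces of structures, using the fact that $W_{g,1}$ is $2$-connected with highly restricted (co)homology.

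First, I would construct a map $u : E\OO \to \mathbb{CP}^\infty$ over $B\OO$. Since $E\OO$ is contractible and $\bar{\theta}^{\hyp}$ is a fibration between nonempty connected spaces, such a lift exists and is unique up to contractible choice. The crucial observation is that the homotopy fiber of $u$ (as a plain map; or equivalently as a map over $B\OO$) is $\Omega \mathbb{CP}^\infty \simeq K(\bZ,1)$, because $E\OO$ is contractible.

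Next, I would construct the boundary condition. Since $\partial W_{g,1} = S^5$ and the obstructions to lifting a $\theta^{\hyp}$-structure on $S^5$ through $u$ lie in $H^{k+1}(S^5;\pi_k(K(\bZ,1))) = H^2(S^5;\bZ) = 0$, any $\theta^{\hyp}$-structure on $S^5$ lifts to a stable framing. So I pick any lift $\ell^{\sfr}_{\partial W_{g,1}}$ of $\ell^{\hyp}_{\partial W_{g,1}}$.

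The main step is to show that the induced map
\[
u_* : \Theta^{\sfr}(W_{g,1}; \ell^{\sfr}_{\partial W_{g,1}}) \lra \Theta^{\hyp}(W_{g,1}; \ell^{\hyp}_{\partial W_{g,1}})
\]
is a weak equivalence by showing that all homotopy fibers are weakly contractible. Over a given $\ell \in \Theta^{\hyp}(W_{g,1}; \ell^{\hyp}_{\partial W_{g,1}})$, the homotopy fiber is the space of sections of the pulled-back fibration with fiber $K(\bZ,1)$ over $W_{g,1}$ which extend the fixed boundary section. Because $H^2(W_{g,1};\bZ) = 0$, any such principal $S^1$-bundle on $W_{g,1}$ is trivialisable. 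The obstruction to extending the boundary section across $W_{g,1}$ lies in $H^2(W_{g,1},\partial W_{g,1};\bZ) \cong H_4(W_{g,1};\bZ) = 0$ by Lefschetz duality, and the space of such extensions is a torsor for the based mapping space $\Map_*(W_{g,1}/\partial W_{g,1},\, K(\bZ,1)) \simeq \Map_*(\#^g S^3 \times S^3,\, K(\bZ,1))$, whose homotopy groups are reduced cohomology groups $\widetilde{H}^{1-i}(\#^g S^3 \times S^3;\bZ)$. These vanish for all $i \geq 0$, so this mapping space is weakly contractible. Hence the homotopy fiber of $u_*$ is weakly contractible, so $u_*$ is a weak equivalence.

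Finally, $\Diff_\partial(W_{g,1})$-equivariance is immediate: the action on either space of structures is by precomposition on the tangent bundle, while $u_*$ is post-composition with a fixed map that is independent of the diffeomorphism group. The main obstacle is the bookkeeping in the third step, namely verifying that the fiber of $u$ behaves as a (trivialisable, untwisted) $K(\bZ,1)$-bundle when pulled back to $W_{g,1}$, and correctly identifying the obstruction and indeterminacy for relative sections; but $2$-connectedness of $W_{g,1}$ and the vanishing of its relative cohomology in the relevant degrees make these checks straightforward.
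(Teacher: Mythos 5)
Your proposal is correct and follows essentially the same route as the paper: fix a lift $u : E\OO \to \bC\bP^\infty$ over $B\OO$, observe that its homotopy fibre is $K(\bZ,1)$, and run relative obstruction theory, using that $H^{\leq 2}(W_{g,1},\partial W_{g,1};\bZ)=0$ to get both existence and contractibility of the space of relative lifts. One small quibble: the space of choices of $u$ itself is not contractible (it is equivalent to the homotopy fibre of $\bar{\theta}^{\hyp}$, since $E\OO$ is contractible), but this is harmless as any choice of $u$ works.
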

\begin{proof}
The map $\ell^{\hyp}_{\partial W_{g,1}} : \partial W_{g,1} = S^5 \to \bC\bP^\infty$ is nullhomotopic, giving a lift along $u : E\OO \to \bC\bP^\infty$ and hence a boundary condition $\ell^{\sfr}_{\partial W_{g,1}}$ for stable framings (and a canonical isomorphism from $u_* \ell^{\sfr}_{\partial W_{g,1}}$ to $\ell^{\hyp}_{\partial W_{g,1}}$). The map $u$ then induces a map $u_*$ between the two spaces of structures. Seeing that it is surjective in $\pi_0$ means showing that the relative lifting problem
\begin{equation*}
\begin{tikzcd}
\partial W_{g,1} \dar \rar{\ell^{\sfr}_{\partial W_{g,1}}} & E\OO \dar{u}\\
W_{g,1} \rar{\ell^{\hyp}} \arrow[ru, dashed] & \bC\bP^\infty
\end{tikzcd}
\end{equation*}
can always be solved: it can because $H^2(W_{g,1}, \partial W_{g,1};\bZ)=0$. Moreover it can be solved uniquely up to homotopy as all the lower relative cohomology groups also vanish, which shows that the fibres of $u_*$ are contractible too.
\end{proof}

This lemma means that $\ell^{\hyp}_{W_{g,1}}$ corresponds to a canonical stable framing $\ell^{\sfr}_{W_{g,1}}$. In Section \ref{sec:SurgeryKernel} we described a quadratic refinement $\mu = \mu_{\ell^{\hyp}_{X_d}}$ on $\pi_3(X_d)$, which restricts along $e$ to a quadratic refinement $e^*\mu$ on $\pi_3(W_{g,1}) = H_3(W_{g,1};\bZ)$. In \cite[Section 6.1]{KR-WFram} it was shown that under the action of $\Gamma_{g,1} := \pi_0 \Diff_\partial(W_{g,1})$ the set $\theta^{\sfr}(W_{g,1} ; \ell^{\sfr}_{\partial W_{g,1}}) := \pi_0 \Theta^{\sfr}(W_{g,1} ; \ell^{\sfr}_{\partial W_{g,1}})$ has two orbits, distinguished by the Arf invariant of the corresponding quadratic form. However, we will not need to know what the Arf invariant of $e^*\mu$ is.

\subsection{The groups $\Gamma_{g,1}^{\hyp}$ and $\Stab_{\Gamma_{g,1}}(\ell^{\hyp}_{W_{g,1}})$}

If we define 
$$\Gamma_{g,1}^{\hyp} := \pi_1(\Theta^{\hyp}(W_{g,1} ; \ell^{\hyp}_{\partial W_{g,1}}) \hcoker \Diff_\partial(W_{g,1}) , \ell^{\hyp}_{W_{g,1}})$$
then the long exact sequence of homotopy groups for this homotopy quotient has a portion
$$\pi_1(\Theta^{\hyp}(W_{g,1} ; \ell^{\hyp}_{\partial W_{g,1}}), \ell^{\hyp}_{W_{g,1}}) \lra \Gamma_{g,1}^{\hyp} \lra \Stab_{\Gamma_{g,1}}(\ell^{\hyp}_{W_{g,1}}) \lra 1,$$
where $\Stab_{\Gamma_{g,1}}(\ell^{\hyp}_{W_{g,1}})$ denotes the stabiliser of $[\ell^{\hyp}_{W_{g,1}}] \in \pi_0 \Theta^{\hyp}(W_{g,1} ;  \ell^{\hyp}_{\partial W_{g,1}})$ under the $\Gamma_{g,1}$-action. Using Lemma \ref{lem:SFReqHYP}, $\Gamma_{g,1}^{\hyp}$ is identified with the analogous stably framed mapping class group $\Gamma_{g,1}^{\sfr}$, and similarly for the stabiliser. Considered as the stabiliser of a stable framing, the latter has been determined up to an extension in \cite{KR-WFram}. Specifically, combining the discussion in Section 6.1 of that paper with its Proposition 5.1 and Section 3.4 gives a central extension
\begin{equation}\label{eq:StabExtWg}
1 \lra \Theta_7  \lra \Stab_{\Gamma_{g,1}}(\ell^{\hyp}_{W_{g,1}}) \lra \mathrm{Sp}_{2g}^{\text{q or a}}(\bZ) \lra 1,
\end{equation}
where $\mathrm{Sp}_{2g}^{\text{q or a}}(\bZ)$ denotes the stabiliser of a quadratic refinement of Arf invariant 0 or 1 respectively. (Depending on what the Arf invariant of $e^*\mu$ is.)

In \cite[Lemma 7.5]{grwabelian} there is described\footnote{Strictly speaking that source only discusses $\mathrm{Sp}_{2g}^{\text{q}}(\bZ)$, but in a stable range the cohomology of $\mathrm{Sp}_{2g}^{\text{q}}(\bZ)$ and $\mathrm{Sp}_{2g}^{\text{a}}(\bZ)$ agree.} a class $\mu \in H^2(\mathrm{Sp}_{2g}^{\text{q or a}}(\bZ);\bZ)$, characterised by two properties:
\begin{enumerate}[(i)]
\item $\mu$ vanishes when restricted to the $\bZ/4$ subgroup generated by $\big(\begin{smallmatrix}
0 & -1\\
1 & 0
\end{smallmatrix}\big) \in \mathrm{Sp}_{2}^{\text{q}}(\bZ)$,

\item the map $\mu_* : H_2(\mathrm{Sp}_{2g}^{\text{q or a}}(\bZ);\bZ) \to \bZ$ is surjective as long as $g \geq 2$ in the case $q$ and $g \geq 3$ in the case $a$\footnote{See \cite[Lemma 3.5]{KrannichMCG} for $\mathrm{Sp}_{4}^{\text{q}}(\bZ)$; for $\mathrm{Sp}_{6}^{\text{a}}(\bZ)$ observe it contains $\mathrm{Sp}_{4}^{\text{q}}(\bZ)$.}, and 8 times it is the signature map. It generates $\mathrm{Hom}(H_2(\mathrm{Sp}_{2g}^{\text{q or a}}(\bZ);\bZ), \bZ)$.
\end{enumerate}
This class $\mu$ can be multiplied by $\Sigma_\mathrm{Milnor} \in \Theta_7$ to give a class $\mu \cdot \Sigma_\mathrm{Milnor} \in H^2(\mathrm{Sp}_{2g}^{\text{q or a}}(\bZ);\Theta_7)$, corresponding to a central extension
\begin{equation}\label{eq:UCE}
0 \lra \Theta_7 \lra E \lra \mathrm{Sp}_{2g}^{\text{q or a}}(\bZ) \lra 1.
\end{equation}

\begin{theorem}
The extension \eqref{eq:StabExtWg} is isomorphic to the extension \eqref{eq:UCE}.
\end{theorem}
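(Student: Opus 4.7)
The extension \eqref{eq:StabExtWg} defines a class $c \in H^2(\mathrm{Sp}_{2g}^{\text{q or a}}(\bZ); \Theta_7)$, and we wish to show $c = \mu \cdot \Sigma_\mathrm{Milnor}$. The plan is to apply the characterisation of $\mu$ recalled just before the theorem: property (ii) says that $\mu$ generates the free cyclic group $\mathrm{Hom}(H_2(\mathrm{Sp}_{2g}^{\text{q or a}}(\bZ); \bZ), \bZ)$, while property (i) says that $\mu$ restricts to zero on the canonical $\bZ/4 \leq \mathrm{Sp}_{2g}^{\text{q or a}}(\bZ)$. Combining these via the universal coefficient sequence
\[
0 \to \mathrm{Ext}(H_1, \Theta_7) \to H^2(\mathrm{Sp}_{2g}^{\text{q or a}}(\bZ); \Theta_7) \to \mathrm{Hom}(H_2, \Theta_7) \to 0
\]
shows that $\mu \cdot \Sigma_\mathrm{Milnor}$ is the unique class in $H^2(\mathrm{Sp}_{2g}^{\text{q or a}}(\bZ); \Theta_7)$ that (i) vanishes on the $\bZ/4$, and (ii) pairs with $H_2$ to take values in the subgroup $\langle \Sigma_\mathrm{Milnor} \rangle \leq \Theta_7$ via the map $\tfrac{1}{8}\mathrm{sign} \otimes \Sigma_\mathrm{Milnor}$. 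So it suffices to verify these two properties for $c$.

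First I would verify property (i) by constructing an explicit splitting of \eqref{eq:StabExtWg} over the $\bZ/4$ subgroup generated by $J = \big(\begin{smallmatrix} 0 & -1 \\ 1 & 0 \end{smallmatrix}\big) \in \mathrm{Sp}_2(\bZ) \leq \mathrm{Sp}_{2g}^{\text{q or a}}(\bZ)$. This amounts to exhibiting a compactly supported diffeomorphism of $W_{1,1} \subset W_{g,1}$ of order $4$ whose action on $H_3(W_{1,1}; \bZ)$ is $\pm J$ and which preserves the stable framing $\ell^\sfr_{W_{g,1}}$. Identifying $W_{1,1}$ with $(S^3 \times S^3) \setminus \mathrm{int}(D^6)$ and viewing $S^3$ as the unit quaternions, the diffeomorphism $(u,v) \mapsto (v, \bar u)$ has the correct order and homological action; preservation of the stable framing follows from the fact that both this diffeomorphism and the framing arise from the standard $\mathrm{U}(2)$-equivariant structure on $\mathbb{H}^2 \setminus \{0\}$.

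Next I would verify property (ii) by computing the pairing of $c$ with an arbitrary class in $H_2(\mathrm{Sp}_{2g}^{\text{q or a}}(\bZ); \bZ)$. Given a map $\Sigma \to B\mathrm{Sp}_{2g}^{\text{q or a}}(\bZ)$ from a closed surface, pulling back produces a $W_{g,1}$-bundle $\pi : E \to \Sigma$ equipped with a fibrewise $\theta^\sfr$-structure $\ell_\pi$, and the pairing of $c$ with $[\Sigma]$ is the $\Theta_7$-valued obstruction to lifting $(\pi, \ell_\pi)$ to a genuine bundle classified by $B\Stab_{\Gamma_{g,1}}(\ell^\hyp_{W_{g,1}})$. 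Following the strategy of \cite{KR-WFram}, this obstruction is computed by capping off $E$ to a closed $\#^g S^3 \times S^3$-bundle over $\Sigma$, using the stable framing to produce a tangentially framed null-cobordism in the relevant bordism group, and invoking the classical identification of $\Theta_7$ as a quotient of $\Omega_7^\mathrm{fr}$ (via the Adams $e$-invariant and the Hirzebruch signature formula) to express the resulting element as $\tfrac{1}{8}\mathrm{sign}(\cdot) \cdot \Sigma_\mathrm{Milnor}$. By property (ii) of $\mu$ the signature of this null-cobordism equals $8 \cdot \mu_*([\Sigma])$, which completes the identification.

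The main obstacle is the second step: pinning down the exact proportionality constant relating the $\Theta_7$-obstruction to the signature of the total space. This is the delicate part of the argument, but it is precisely what is established in Sections 3.4 and 5 of \cite{KR-WFram} for the stably framed case, and the translation from $\mathrm{Sp}_{2g}^\mathrm{q}(\bZ)$ to $\mathrm{Sp}_{2g}^\mathrm{a}(\bZ)$ is harmless in the stable range $g \geq 5$ relevant here.
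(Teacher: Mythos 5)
Your strategy is genuinely different from the paper's. You propose to pin down the class $\mu\cdot\Sigma_{\mathrm{Milnor}}$ axiomatically (vanishing on the $\bZ/4$, prescribed image in $\Hom(H_2,\Theta_7)$) and then verify both axioms for the extension class of \eqref{eq:StabExtWg} directly. The paper instead includes \eqref{eq:StabExtWg} into the extension $1\to\Theta_7\to\Gamma_{g,1}\to\Gamma_{g,1}/\Theta_7\to 1$, quotes Krannich's identification of the latter's class as $-\tfrac{\chi^2-\mathrm{sgn}}{8}\cdot\Sigma_{\mathrm{Milnor}}$, and restricts along $j$, where the $\chi^2$-term dies because $\chi$ vanishes on $\mathrm{Sp}_{2g}^{\text{q or a}}(\bZ)$. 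Your route is viable in outline, but as written it has genuine gaps at each of its three steps.

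First, the uniqueness claim: two classes in $H^2(\mathrm{Sp}_{2g}^{\text{q or a}}(\bZ);\Theta_7)$ with the same image in $\Hom(H_2,\Theta_7)$ differ by an element of $\mathrm{Ext}(H_1,\Theta_7)$, which is \emph{not} zero (in the quadratic case $H_1\cong\bZ/4$, so $\mathrm{Ext}(H_1,\Theta_7)\cong\bZ/4$). Your condition (i) kills this ambiguity only if the $\bZ/4$ generated by $J$ surjects onto the abelianisation; this is true but must be stated and justified, and you do not address it at all in the Arf-invariant-$1$ case. Second, your verification of (i) does not engage with the actual framing: $\ell^{\sfr}_{W_{g,1}}$ is the framing corresponding to $e^*\ell^{\hyp}_{X_d}$, whose $\Gamma_{g,1}$-orbit is determined by an Arf invariant we do not control, not a ``standard'' one. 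Whether your order-$4$ diffeomorphism $\varphi$ fixes some framing in that orbit is governed by the crossed homomorphism $\delta(\varphi)\in[W_{g,1}/\partial,\SO]_*\cong\bZ^{2g}$: one needs $\delta(\varphi)\in\mathrm{Im}(1-J)$, a subgroup of index $2$, so there is a genuine potential $\bZ/2$ obstruction which the one-line appeal to $\mathrm{U}(2)$-equivariance does not resolve (and there are the usual issues of realising $\varphi$ as a compactly supported diffeomorphism of honest order $4$). Third, the proportionality constant in (ii): Sections 3.4 and 5 of \cite{KR-WFram} produce the extension \eqref{eq:StabExtWg} itself, i.e.\ identify its kernel as a central $\Theta_7$, but they do not compute the $\Theta_7$-valued obstruction over a surface as $\tfrac{1}{8}\mathrm{sgn}\cdot\Sigma_{\mathrm{Milnor}}$; that computation is \cite[Lemma 3.19, Theorem 3.22]{KrannichMCG}, which is precisely what the paper's proof invokes. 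So at the decisive step your argument defers to a source that does not contain it, and filling the gap honestly amounts to redoing, or correctly citing, Krannich's signature computation.
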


\begin{proof}
Including $\Stab_{\Gamma_{g,1}}(\ell^{\hyp}_{W_{g,1}})$ into $\Gamma_{g,1}$, we obtain a map of central extensions
\begin{equation*}
\begin{tikzcd}
1 \rar &\Theta_7 \arrow[d, equals] \rar & \Stab_{\Gamma_{g,1}}(\ell^{\hyp}_{W_{g,1}}) \dar{i} \rar & \mathrm{Sp}_{2g}^{\text{q or a}}(\bZ) \rar \dar{j} & 1\\
1 \rar & \Theta_7 \rar & \Gamma_{g,1} \rar & \Gamma_{g,1}/\Theta_7 \rar & 1.
\end{tikzcd}
\end{equation*}
Krannich \cite{KrannichMCG} has analysed the lower extension, as follows (we implicitly specialise his results to the case $2n=6$ without further comment).

Using the notation $\Gamma_{g,1/2} = \Gamma_{g,1}/\Theta_7$, in \cite[(1.7)]{KrannichMCG} he gives an extension\footnote{Our map $j$ splits this extension over the subgroup $\mathrm{Sp}_{2g}^{\text{q or a}}(\bZ)$, which does not contradict the fact \cite[Theorem 2.2]{KrannichMCG} that it is not split over $\mathrm{Sp}_{2g}(\bZ)$.}
$$1 \lra H^3(W_{g,1} ; S\pi_3(\SO(3))) \lra \Gamma_{g,1}/\Theta_7 \overset{p}\lra \mathrm{Sp}_{2g}(\bZ) \lra 0.$$
The set of stable framings $\pi_0\Theta^{\sfr}(W_{g,1} ; \ell^{\sfr}_{\partial W_{g,1}})$ is a $[W_{g,1}/\partial, \SO]_*$-torsor, and acting on the stable framing $\ell^{\sfr}_{W_{g,1}}$ corresponding via Lemma \ref{lem:SFReqHYP} to $\ell^{\hyp}_{W_{g,1}}$ gives a crossed homomorphism
$$s: \Gamma_{g,1} \lra [W_{g,1}/\partial, \SO]_* = H^3(W_{g,1} ; \pi_3(\SO)),$$
which descends to $\Gamma_{g,1}/\Theta_7$; this is completely parallel to the discussion in Section \ref{sec:TS}. Together the maps $s$ and $p$ give a homomorphism
$$(s,p) : \Gamma_{g,1}/\Theta_7 \lra H^3(W_{g,1} ; \pi_3(\SO)) \rtimes \mathrm{Sp}_{2g}(\bZ) = \bZ^{2g} \rtimes \mathrm{Sp}_{2g}(\bZ).$$
This is not an isomorphism, because $S\pi_3(\SO(3)) \to \pi_3(\SO)$ is not (this is related to Corollary \ref{cor:tIsMultBy2}), but it is injective. Krannich shows that $\Gamma_{g,1}/\Theta_7$ has trivial abelianisation, and then defines a cohomology class ``$\tfrac{\chi^2 - \mathrm{sgn}}{8}$'' $\in H^2(\Gamma_{g,1}/\Theta_7; \bZ)$ by showing in \cite[Lemma 3.19 (iii)]{KrannichMCG} that the composition
$$H_2(\Gamma_{g,1}/\Theta_7 ; \bZ) \overset{(s,p)_*}\lra H_2(\bZ^{2g} \rtimes \mathrm{Sp}_{2g}(\bZ) ;\bZ) \overset{\chi^2-\mathrm{sgn}}\lra \bZ$$
has image $8 \cdot \bZ$. Here $\mathrm{sgn}$ is the signature map, factoring over $H_2(\mathrm{Sp}_{2g}(\bZ); \bZ)$, and $\chi^2$ is obtained by cup-squaring the canonical twisted cohomology class $\chi \in H^1(\bZ^{2g} \rtimes \mathrm{Sp}_{2g}(\bZ)  ; \bZ^{2g})$ then applying $\lambda : \bZ^{2g} \otimes \bZ^{2g} \to \bZ$ to the coefficients. In \cite[Theorem 3.22]{KrannichMCG} (using \cite[Lemma 3.4]{KrannichMCG}) he shows that the lower extension is classified by the cohomology class
$$-\tfrac{\chi^2 - \mathrm{sgn}}{8} \cdot \Sigma_\mathrm{Milnor} \in H^2(\Gamma_{g,1}/\Theta_7 ; \Theta_7).$$

Pulling this back along $j : \mathrm{Sp}_{2g}^{\text{q or a}}(\bZ) \to \mathrm{Sp}_{2g}(\bZ) \to  \Gamma_{g,1}/\Theta_7$, and using that $\chi$ vanishes on this subgroup, it follows that the top extension is classified by $\mu \cdot \Sigma_{\mathrm{Milnor}} \in H^2(\mathrm{Sp}_{2g}^{\text{q or a}}(\bZ) ; \Theta_7)$.
\end{proof}

We record the following consequences:

\begin{corollary}\label{cor:Wg1Properties}
Suppose that $g \geq 3$.
\begin{enumerate}[(i)]
\item In the Leray--Hochschild--Serre spectral sequence for the extension \eqref{eq:StabExtWg} the differential $d_2 : H_2(\mathrm{Sp}_{2g}^{\text{q or a}}(\bZ); \bZ) \to \Theta_7$ is surjective.

\item The finite residual of $\Stab_{\Gamma_{g,1}}(\ell^{\hyp}_{W_{g,1}})$ is the central subgroup $\Theta_7$.
\end{enumerate}
\end{corollary}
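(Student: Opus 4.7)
The plan is as follows. For (i), recall from the text immediately preceding the statement that the extension \eqref{eq:StabExtWg} is classified by $\mu \cdot \Sigma_{\text{Milnor}} \in H^2(\mathrm{Sp}_{2g}^{\text{q or a}}(\bZ); \Theta_7)$. In the Lyndon--Hochschild--Serre spectral sequence for a central extension $1 \to A \to E \to Q \to 1$, the transgression $d_2 \colon E^2_{2,0} = H_2(Q;\bZ) \to E^2_{0,1} = A$ agrees (up to sign) with Kronecker evaluation against the extension class. Applied here, $d_2$ factors as
$$H_2(\mathrm{Sp}_{2g}^{\text{q or a}}(\bZ); \bZ) \xrightarrow{\mu_*} \bZ \xrightarrow{\cdot \Sigma_{\text{Milnor}}} \Theta_7,$$
with the first map surjective by property (ii) of $\mu$ recalled above (the standing assumption $d \geq 3$ yielding $g \geq 5$) and the second surjective as $\Sigma_{\text{Milnor}}$ generates $\Theta_7 \cong \bZ/28$; thus $d_2$ is surjective.

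For (ii), set $G := \Stab_{\Gamma_{g,1}}(\ell^{\hyp}_{W_{g,1}})$ and $Q := \mathrm{Sp}_{2g}^{\text{q or a}}(\bZ)$, with projection $\pi \colon G \to Q$. The containment of the finite residual of $G$ in $\Theta_7$ is immediate because $Q$ is finitely generated linear over $\bZ$ and hence residually finite by Mal'cev: every finite quotient of $Q$ pulls back to a finite quotient of $G$ in which $\Theta_7$ dies. The reverse containment $\Theta_7 \leq$ finite residual is the substantive part. Given a finite-index normal subgroup $N \leq G$, set $\bar{N} = \pi(N) \leq Q$ and $A_0 = \Theta_7 \cap N$; the inclusion $N \hookrightarrow \pi^{-1}(\bar{N})$ provides a splitting of the pulled-back extension after reducing the kernel modulo $A_0$, so the class $c|_{\bar{N}} \in H^2(\bar{N}; \Theta_7)$ must vanish modulo $A_0$. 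Since $\Theta_7$ is cyclic with $\Sigma_{\text{Milnor}}$ a generator and $c = \mu \cdot \Sigma_{\text{Milnor}}$, letting $A_0$ range over the maximal proper subgroups of $\Theta_7$ reduces the problem to showing that for each prime $p \in \{2, 7\}$ and each finite-index $\bar{N} \leq Q$, the restriction $\mu|_{\bar{N}}$ is non-zero in $H^2(\bar{N}; \bZ/p)$.

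The main obstacle is this mod-$p$ non-vanishing, which I expect to handle as follows. By universal coefficients it is equivalent to showing that the image of the composition $H_2(\bar{N};\bZ) \to H_2(Q;\bZ) \xrightarrow{\mu_*} \bZ$ is not contained in $p\bZ$. The transfer gives that this image contains $[Q:\bar{N}] \cdot \bZ$, immediately settling the case $p \nmid [Q:\bar{N}]$. For the remaining (harder) case $p \mid [Q:\bar{N}]$ I intend to use the interpretation $8\mu = \sigma$, the Meyer signature class, and a geometric construction: by pulling back surface bundles of prescribed signature along suitable finite covers of the base and forming connected sums, one realises a class in $H_2(\bar{N};\bZ)$ on which $\mu$ evaluates to an integer coprime to $p$, completing the argument.
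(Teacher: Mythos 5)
Part (i) of your proposal is correct and is exactly the paper's argument: the extension \eqref{eq:StabExtWg} has been identified with the one classified by $\mu\cdot\Sigma_{\mathrm{Milnor}}$, the $d_2$ of the Lyndon--Hochschild--Serre spectral sequence of a central extension is evaluation against the extension class, and surjectivity follows since $\mu_*$ is onto $\bZ$ (for $g\geq 3$ in both the $q$ and $a$ cases) and $\Sigma_{\mathrm{Milnor}}$ generates $\Theta_7$. Your reduction in (ii) is also sound up to a point: residual finiteness of $\mathrm{Sp}_{2g}^{\text{q or a}}(\bZ)$ gives the containment of the finite residual in $\Theta_7$, and the splitting argument correctly reduces the reverse containment to showing that $\mu|_{\bar N}$ is non-zero in $H^2(\bar N;\bZ/p)$ for every finite-index $\bar N$ and $p\in\{2,7\}$. (A small imprecision: by universal coefficients this is \emph{implied by}, but not equivalent to, the statement that $\mu_*\big(H_2(\bar N;\bZ)\big)\not\subseteq p\bZ$ --- the class could also be detected on the $\mathrm{Ext}$ term --- so you have in fact set yourself a strictly stronger target.)

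The genuine gap is the final step, which you defer to a ``geometric construction'' with surface bundles and connected sums. No such construction can work. The statement that $\mu|_{\bar N}\bmod p$ is non-zero for \emph{every} finite-index subgroup $\bar N$ is, via $2\mu = c$ for $c$ the class of the inverse-image of $\mathrm{Sp}_{2g}(\bZ)$ in the universal cover of $\mathrm{Sp}_{2g}(\bR)$, essentially equivalent to Deligne's theorem that every finite-index subgroup of $\widetilde{\mathrm{Sp}}_{2g}(\bZ)$ contains $2\bZ$ (equivalently, that $c|_{\bar N}\bmod n\neq 0$ for all $n\geq 3$; take $n=7$ for your $p=7$ case and $n=4$ for your $p=2$ case, since $\mu|_{\bar N}\in 2H^2$ would force $c|_{\bar N}\in 4H^2$). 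This is a deep arithmetic fact resting on the congruence subgroup property and the computation of the metaplectic kernel; it is not accessible by exhibiting explicit cycles, and indeed the transfer bound $[Q:\bar N]\cdot\bZ$ is the most that formal or geometric manipulations yield. For a general residually finite group in place of $\mathrm{Sp}_{2g}(\bZ)$ the analogous statement is simply false, so any proof must use this arithmetic input. The paper sidesteps all of this by citing the discussion near equation (6) of the Krannich--Randal-Williams paper, which is where Deligne's theorem is invoked; your write-up should do the same, or at least state Deligne's theorem as the black box replacing your surface-bundle sketch.
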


\begin{proof}
Item (i) follows from the facts that $\mu : H_2(\mathrm{Sp}_{2g}^{\text{q or a}}(\bZ);\bZ) \to \bZ$ is surjective and $\Sigma_\mathrm{Milnor}$ generates $\Theta_7$. Item (ii) follows from the discussion in \cite{KrannichRW} near equation (6).
\end{proof}

\subsection{Calculating the abelianisation of $\Gamma_{g,1}^{\hyp}$}\label{sec:WgAbCalc}

The abelianisation of $\Gamma_{g,1}^{\hyp} = \Gamma_{g,1}^{\sfr}$ can be calculated in a way completely parallel to Section \ref{sec:GRW}, using $\bar{\theta}^\sfr : E\OO \to B\OO$ and its pullback $\theta^\sfr : \SO/\SO(6) \to B\SO(6)$ to an unstable tangential structure. The corresponding Thom spectrum is now $MT\theta^\sfr(6) = \Sigma^{\infty-6} \SO/\SO(6)_+$, so there is a map
$$H_1(\Gamma_{g,1}^{\hyp};\bZ) \lra \pi_1^s(\Sigma^{\infty-6} \SO/\SO(6)_+)$$
which is an isomorphism as long as $g \geq 5$. The corresponding long exact sequence simplifies to a split short exact sequence
$$0 \to \bZ/4 = \pi_7^s(\SO/\SO(6)) \to \pi_1^s(\Sigma^{\infty-6} \SO/\SO(6)_+) \to \pi_7^s(S^0) = \bZ/240\{\sigma\} \to 0.$$

\subsection{Relation to $X_d$}

The embedding $e$ gives a map of central extensions
\begin{equation*}
\begin{tikzcd}
1 \rar & \Theta_7  \rar \dar{\Phi} & \Stab_{\Gamma_{g,1}}(\ell^{\hyp}_{W_{g,1}}) \dar \rar & \mathrm{Sp}_{2g}^{\text{q or a}}(\bZ) \rar \dar & 1\\
1 \rar & \mathrm{K}_d  \rar & \Stab_{\MCG_d}(\ell^{\hyp}_{X_d}) \rar & \mathrm{Aut}(\pi_3(X_d), \lambda, \mu) \rar & 1.
\end{tikzcd}
\end{equation*}
As ``finite residual'' is covariantly functorial for group homomorphisms, Corollary \ref{cor:Wg1Properties} (ii) shows that $\mathrm{Im}(\Phi)$ is contained in the finite residual of $\Stab_{\MCG_d}(\ell^{\hyp}_{X_d})$. Combined with Theorem \ref{thm:Main} this proves Theorem \ref{thm:B}.

\begin{remark}
Using this map of central extensions, Corollary \ref{cor:Wg1Properties} (i) implies that the differential $d_2 : H_2(\mathrm{Aut};\bZ) \to \mathrm{K}_d$ in \eqref{eq:SS} hits the subgroup $\mathrm{Im}(\Phi)$; the rest of the discussion in Section \ref{sec:FurtherUpperBounds} shows that this is precisely what it hits.
\end{remark}

\begin{remark}\label{rem:WgAbApplication}
It follows from the calculation in Section \ref{sec:WgAbCalc} that if $[f] \in \MCG_d^\hyp$ can be supported on $e(W_{g,1}) \subset X_d$, then its image under
$$\MCG_d^\hyp \lra H_1(\MCG_d^\hyp;\bZ) \overset{\sim}\lra \pi_1^s(MT{\theta}^\hyp(6)) \lra \pi_7^s(MT\bar{\theta}^\hyp)$$
lies in the image of $\iota_* : \pi_7^s(S^0) \to \pi_7^s(MT\bar{\theta}^\hyp)$. This gives another point of view on the proof of Lemma \ref{lem:kappaZeroOnAlpha}.
\end{remark}

\noindent\textbf{Declarations required by Springer Nature.} On behalf of all authors, the corresponding author states that there is no conflict of interest. Being a typical paper in pure mathematics, there is no ``data'' supporting this paper. I am therefore encouraged to write: No datasets were generated or analysed during the current study.

\bibliographystyle{amsalpha}
\bibliography{biblio}

\end{document}